\numberwithin{equation}{section}
\newtheorem{theorem}{Theorem}[section]
\newtheorem{proposition}[theorem]{Proposition}
\newtheorem{lemma}[theorem]{Lemma}
\newtheorem{definition}[theorem]{Definition}
\theoremstyle{definition}
\newtheorem{remark}[theorem]{Remark}
\renewcommand{\epsilon}{\eps}
\newcommand{\D}{{\mathcal D}}
\renewcommand{\L}{{\mathcal L}}
\newcommand{\C}{{\mathcal C}}
\newcommand{\R}{{\mathbb R}}
\newcommand{\eps}{\varepsilon}
\newcommand{\M}{\mathscr{M}}
\newcommand{\pnorm}[2][]{\if #1'' \left|#2\right|_p \else \left|#2\right|_{#1} \fi}
\newcommand{\Ds}[1]{(-\Delta)^{#1}}
\newcommand{\frin}{(-\Delta)^{s}_\infty}
\newcommand{\frp}{(-\Delta)^{s}_p}
\renewcommand{\theta}{\vartheta}
\newcommand{\Sf}{{\mathbb S^{n-1}}}
\newcommand{\Rn}{{\mathbb R^{n}}}
\newcommand{\eqlab}[1]{\begin{equation}  \begin{aligned}#1 \end{aligned}\end{equation}} 
\newcommand{\bgs}[1]{\begin{equation*} \begin{aligned}#1\end{aligned}\end{equation*}} 
 \newcommand{\syslab}[2] []  {\begin{equation}#1  \left\{\begin{aligned}#2\end{aligned}\right.\end{equation}} 
  \newcommand{\sys}[2][]{\begin{equation*}#1  \left\{\begin{aligned}#2\end{aligned}\right.\end{equation*}}
\def\Xint#1{\mathchoice
{\XXint\displaystyle\textstyle{#1}}%
{\XXint\textstyle\scriptstyle{#1}}%
{\XXint\scriptstyle\scriptscriptstyle{#1}}%
{\XXint\scriptscriptstyle\scriptscriptstyle{#1}}%
\!\int}
\def\XXint#1#2#3{{\setbox0=\hbox{$#1{#2#3}{\int}$ }
\vcenter{\hbox{$#2#3$ }}\kern-.6\wd0}}
\def\dashint{\Xint-}
\title[Asymptotic expansion for nonlinear nonlocal operators]{An asymptotic expansion for the fractional $p$-Laplacian \\ and for gradient dependent nonlocal operators}
\author[C. Bucur]{Claudia Bucur}
\author[M.\ Squassina]{Marco Squassina}
\address[C. Bucur]{Dipartimento di Scienza ed Alta Tecnologia \newline\indent Universit\`a degli Studi dell'Insubria 
\&
\newline\indent
Riemann International School of Mathematics
\newline\indent
 Villa Toeplitz - Via G.B. Vico 46 - Varese, Italy. 
}
\email{claudia.bucur@aol.com, claudiadalia.bucur@uninsubria.it}
\address[M.\ Squassina]{Dipartimento di Matematica e Fisica \newline\indent
	Universit\`a Cattolica del Sacro Cuore \newline\indent
	Via dei Musei 41, I-25121 Brescia, Italy}
\email{marco.squassina@unicatt.it}
\thanks{The authors are members
	of {\em Gruppo Nazionale per l'Analisi Ma\-te\-ma\-ti\-ca, la Probabilit\`a e le loro Applicazioni} (GNAMPA) of the {\em Istituto Nazionale di Alta Matematica} (INdAM). 
	The first author is supported by the INdAM Starting Grant
``PDEs, free boundaries, nonlocal equations and applications''.
\newline\indent
The authors thank Hoai-Minh Nguyen for very fruitful discussions on the subject, carried out while visiting \'Ecole Polytechnique F\'ed\'erale de Lausanne. The hosting institution is gratefully acknowledged. \textcolor{black}{The authors also thank F\'elix Del Teso, who kindly pointed out some imprecisions in a preliminary version of this paper.}
}
\subjclass[2010]{46E35, 28D20, 82B10, 49A50}
\keywords{Mean value formulas, fractional $p$-Laplacian, gradient dependent operators, nonlocal $p$-Laplacian, infinite fractional Laplacian}
\begin{document}

\begin{abstract}
	Mean value formulas are of great importance in the theory of partial differential equations: many very useful results are drawn, for instance, from the well known equivalence between harmonic functions and mean value properties. In the nonlocal setting of  fractional harmonic functions, such an equivalence still holds, and many applications are now-days available. The nonlinear case, corresponding to the  $p$-Laplace operator, has also been recently investigated, whereas the validity of a nonlocal, nonlinear, counterpart remains an open problem. In this paper, we propose a formula for the \emph{nonlocal, nonlinear mean value kernel}, by means of which we obtain an asymptotic representation formula for harmonic functions in the viscosity sense,
	with respect to the fractional (variational) $p$-Laplacian (for $p\geq 2$) and to other gradient dependent nonlocal operators.  \end{abstract}
\maketitle
%

\medskip

\section{Introduction}
One of the most famous basic facts of partial differential equations is that a smooth 
function $u:\Omega\subset\R^n\to\R$ is harmonic (i.e. $\Delta u =0$) in an open set $\Omega$  if and only if it satisfies the mean value property, that is
\begin{equation}
\label{media}
u(x)=\dashint_{B_r(x)} u(y)dy,\qquad \text{whenever  $B_r(x)\subset 	\subset \Omega$}.
\end{equation}
This remarkable characterization of harmonic functions provided a fertile ground for extensive developments and applications. Additionally, such a representation holds in some sense for harmonic functions with respect to more general differential operators. In fact, similar properties can be obtained for quasi-linear operators such as the $p$-Laplace operator $\Delta_p $,
in an asymptotic form. More precisely, a first result is due to Manfredi, Parviainen and Rossi, who proved  in \cite{MPR}  that
if $p\in (1,\infty]$, a continuous function $u:\Omega\to\R$ is $p$-harmonic in $\Omega$ if and only if (in the viscosity sense) 
\begin{equation}
\label{mediav}
u(x)=\frac{2+n}{p+n}\,\dashint_{B_r(x)}u(y)dy+
\frac{p-2}{2p+2n}\Big(\max_{\overline{B_r(x)}}u+\min_{\overline{B_r(x)}}u\Big)
+ o (r^2),
\end{equation}
 as the radius $r$ of the ball vanishes. This characterization also encouraged  a series of new research, such as \cite{harmonious, parharm}, whereas other very nice results were obtained in sequel, see e.g. \cite{tizi,magna,lindgt}. Further new and very interesting  related work is contained in \cite{tesodyn,arroyo2020p}.
 
Notice that formula \eqref {mediav} boils down to \eqref{media} for $p=2,$ up to a rest of order $ o (r^2)$, and that  it holds true in the classical sense at those points $x\in{\Omega}$ such that $u$ is $C^2$ around $x$ and such that the gradient of $u$ does not vanish at $x$.
In the case $p=\infty$ the formula fails in the classical sense, since $|x|^{4/3}-|y|^{4/3}$ is $\infty$-harmonic in $\R^2$ in the viscosity sense, but \eqref{mediav} fails to hold point-wisely. If $p\in (1,\infty)$ and $n=2$ the characterization holds in the classical sense (see \cite{AL,LM}). Finally, the  limiting case $p=1$ was investigated in 2012 in \cite{MPR2}.

Nonlocal operators have been under scrutiny in the past decade. The interest, not only from the purely mathematical point of view, has exponentially risen, and it was natural to ask  the questions affirmatively answered  in the classical case, to the respective fractional counterparts. 

The investigation of  the validity of a mean value property in the {\em nonlocal} linear case, that is for fractional harmonic functions, provided a first positive answer. Let   $s\in (0,1)$, 
we define formally

\[
\Ds{s} u(x) := C(n,s) \lim_{r\to 0} \int_{\Rn \setminus B_r} \frac{u(x)-u(x-y)}{|y|^{n+2s}} \, dy,\qquad 
C(n,s)= 	{\frac{2^{2s} s \Gamma\left(\frac{n}2 +s\right)} {\pi^{\frac{n}2} \Gamma(1-s)}}.
\]
The equivalence between $s$-harmonic functions  (i.e., functions  that satisfy $(-\Delta)^s u=0$) and the fractional mean value property is proved in \cite{AbatLarge} (see also \cite{bucur}), with the fractional mean kernel given by 
\eqlab{ \label{vbg} \M_r^s u(x)=  c(n,s)  r^{2s} \int_{\Rn \setminus B_r} \frac{u(x-y) }{(|y|^2-r^2)^s|y|^n} \, dy.
}
Here,
	\eqlab{ \label{vic1}
	c(n,s) =  \left[\int_{\Rn \setminus B_r} \frac{r^{2s} dy }{(|y|^2-r^2)^s|y|^n} \right]^{-1}= \frac{\Gamma \left(\frac{n}2\right) \sin \pi s } {\pi^{{n/2}+1} }.
	}
	The formula \eqref{vbg} is far from being the outcome of a recent curiosity. It was introduced in 1967 (up to the authors knowledge) in \cite[formula (1.6.2)]{Landkof}, and was recently fleshed out for its connection with the fractional Laplace operator. Different applications rose from such a formula, just to name a few \cite{abatsven,kypri,bucur2020}. We point out furthermore that the formula in \eqref{vbg} is consistent with the classical case, as expected: as $s \to 1^-$, the fractional Laplacian goes to the classical Laplacian, and the mean value kernel goes to the classical mean value on the boundary of the ball (see \cite{Landkof,ClaMarNA}), that is 
	\eqlab{ \label{s1io}\lim_{s \to 1^-} \M_r^s u(x) = \dashint_{B_r(x)} u(y) \, dy.
	}

An asymptotic expansion can be obtained also for fractional anisotropic operators (that include the case of the fractional Laplacian), as one can observe in \cite{ClaMarNA}.  In particular, the result is that a continuous function $u$ is $s$-harmonic in the viscosity sense if and only if \eqref{vbg} holds in a viscosity sense up to  a rest of order two, namely
	\eqlab{
	\label{vbgd} u(x)=  
	 \M_r^s u(x)+o(r^{2}), \qquad  \mbox{ as } \; r \to 0^+.
	}

The goal of this paper is to extend the analysis of the nonlocal case to  the fractional $p$-Laplace operator.  Namely, the fractional (variational) $p$-Laplacian is 
the differential (in a suitable Banach space) of the convex functional 
\[
u\mapsto \frac{1}{p}[u]_{s,p}^p := \frac{1}{p}\iint_{\R^{2n}} \frac{|u(x) - u(y)|^p}{|x - y|^{n+ps}} dx\, dy
\]
and is formally defined as
$$
(-\Delta)^s_pu(x)=\lim_{\eps \to 0^+}\L_\eps^{s,p} u(x),\qquad 
\L_\eps^{s,p} u(x) 
:=\int_{|y|>\eps} \frac{|u(x) -u(x-y)|^{p-2} (u(x)-u(x-y) )}{|y|^{n+sp}}\, dy.
$$
Notice that this  definition is consistent, up to a normalization, with the linear
operator $(-\Delta)^s= (-\Delta)^s_2$. The interested reader can appeal to \cite{hitch,Mingione,brasck,agne,equiv,giampy} to find an extensive theory on the fractional $p$-Laplace operator and other very useful references.

\smallskip

A first issue towards our goal is to identify a reasonable version of a \emph{nonlocal, nolinear, mean value property}.  Up to the authors' knowledge, this is the first attempt to obtain similar properties in the nonlocal, nonlinear, case. Consequently,  on the one hand, the argument is new, so we cannot base our results on any reference.  On the other hand, intuitively one can say that a formula could be reasonable if it were consistent with the already known problems: the nonlocal case of $p=2$ (corresponding to the case of the fractional Laplacian), and the local, nonlinear case, $ s=1$ (corresponding to the case of the classical $p$-Laplacian). 
The main result that we propose is the following.

\medskip
\textbf{Main result 1.}\emph{ Let $p\geq 2$, $\Omega\subset \Rn$ be an open set and let $u\in C(\Omega)\cap L^{\infty}(\Rn)$ be a non constant function. Then 
\[ 
	\frp u(x) =0 
	\]
in the viscosity sense if and only if 
	\bgs{ 
\int_{\Rn \setminus B_r} \left(\frac{|u(x) -u(x-y)|}{|y|^s}\right)^{p-2}  \frac{u(x)-u(x-y)}{|y|^n(|y|^2-r^2)^s}\, dy=
	 	o_r(1) \qquad \mbox{ as } \quad r \to 0^+
		 } 
	holds in the viscosity sense for all $x\in \Omega$.
}
\medskip

This main achievement is precisely stated in Theorem \ref{theorem}. The proof of this main theorem is based on an expansion formula for the fractional $p$-Laplacian for smooth functions, that we do in Theorem \ref{asymp23}, obtaining by means of Taylor expansions and a very careful handling of the remainders, that 
\bgs{
	\int_{\Rn \setminus B_r} \left(\frac{|u(x) -u(x-y)|}{|y|^s}\right)^{p-2}   \frac{u(x)-u(x-y)}{|y|^n(|y|^2-r^2)^s}\, dy= \frp u(x)
			 	  + 
			 		  \mathcal O(r^{2-2s
			 		}) , \qquad \mbox {as $r\to 0^+$.} 
	}
It is enough then to use the viscosity setting to obtain our main result. 		

\medskip
If we try to write the  formula in this first main result in a way that reflects the usual ``$u(x)$ equals its mean value property'', we obtain
\eqlab{ \label{intrpeq1234}
u(x) =  &\; \left[ \int_{\Rn \setminus B_r} \left(\frac{|u(x) -u(x-y)|}{|y|^s}\right)^{p-2}  \frac{ dy}{|y|^n(|y|^2-r^2)^s} \right]^{-1}\\
&\; \left[\int_{\Rn \setminus B_r} \left(\frac{|u(x) -u(x-y)|}{|y|^s}\right)^{p-2}  \frac{u(x-y)}{|y|^n(|y|^2-r^2)^s}\, dy
	 	+ o_r(1) \right]\qquad \mbox{ as } \quad r \to 0^+.
}
 If we now compare  formula \eqref{intrpeq1234} with our desired assumptions, we notice that for $p=2$, supported by \eqref{vic1}, we recover the result in \eqref{vbgd}. Our candidate for the role of the  \emph{nonlocal, nonlinear, mean property} gives indeed the usual formula for the fractional $s$-mean value property.  Looking also at the validity of \eqref{s1io} in our nonlinear case, we come up with an interesting ancillary result as  $s \to 1^-$. In Proposition \ref{mvploc} we obtain an asymptotic expansion for the (classical) variational $p$-Laplace operator, and the equivalence in the viscosity sense between $p$-harmonic and a $p$-mean value property in Theorem \ref{theorem2345}.  The expression obtained by us has some similitudes with other formulas from the literature: compare e.g \eqref{31mvploc} with \cite[Theorem 6.1]{lindgt} that has recently appeared in the literature,  or to \cite[Theorem 1.1]{tizi}, and \eqref{3} with \cite{MPR} (see Remark \ref{mprrmk} for further details). 
 
 Thus, it appears that our formula is consistent with the expressions in the literature for $s=1$, and with the case $p=2$. In our opinion, such a consistency suggests that formula  \eqref{intrpeq1234} is a reasonable proposal  for a \emph{nonlocal, nonlinear, mean value property}.

We mention that a downside of \eqref{intrpeq1234} is that it does not allow to obtain a clean ``$u(x)$ equal to its mean value property'', as customary, given the dependence of $u(x)$ itself, inside the integral, of the right hand side. This ``inconvenient'', nonetheless, does not disappear  in the local setting: the reader can see the already mentioned works \cite{lindgt,tizi}. 

  	\bigskip

 In the second part  of the paper  we investigate a different nonlocal version of the $p$-Laplace and of the infinity Laplace operators,  that arise in tug-of-war games, introduced in \cite{graddep,bjor}.  
 To avoid overloading the notations, we summarize the results on these two nonlocal operators as follows. We denote by $(-\Delta)^s_{p,\pm}$ the nonlocal $p$-Laplace (as in \cite[Section 4]{graddep})
 and by $M_r^{s,p,\pm}$ the ``nonlocal $p$-mean kernel''.   The precise definitions of the operator, of the mean kernel and of viscosity solutions are  given in Subsection \ref{graddepend}.  The asymptotic representation formula is  the content of Theorem~\ref{theorem2}.

\medskip

\textbf{Main result 2.}\emph{ Let $\Omega\subset \Rn$ be an open set and let $u\in C(\Omega)\cap L^{\infty}(\Rn)$. Then 
\[ 
	(-\Delta)_{p,\pm}^s u(x) =0 
	\]
in the viscosity sense if and only if 
	\bgs{
	\lim_{r \to 0^+}  \left(u(x)- M^{s,p,\pm}_r u(x)\right)  = o(r^{2s})  
	}
	holds for all $x\in \Omega$ in the viscosity sense. 	
}
 
 \medskip
 
 An analogous result is stated in Theorem \ref{theorem1} for the infinity Laplacian (introduced in \cite{bjor}) and the ``infinity mean kernel'', defined respectively in Subsection \ref{9876}. The same strategy for the proof as Main Result 1 is adopted for the nonlocal $p$ and infinity Laplacian, by coming up in Theorems \ref{mrspw} and \ref{asympw}, by means of Taylor expansions,  with  formulas which hold for smooth functions, and then passing to the viscosity setting. 
  
 Furthermore,  we study the asymptotic properties of these gradient dependent operators and of the mean kernels as $s \to 1^-$. As a collateral result, we are able to obtain in Proposition \ref{mvploc122} an expansion for the normalized $p$-Laplacian (and the consequence for viscosity solutions in  Theorem \ref{theorem234567}), which up to our knowledge, is new in the literature. In our opinion, the behavior in the limit case $s\to 1^-$ of the formula we propose, justifies here also our choice of the mean value property expression.

We advise the reader interested in mean value formulas for these  two gradient dependent operators here discussed to consult the very recent papers \cite{del2020asymptotic ,lewicka2020non}. Therein, the authors introduce  asymptotic mean value formulas which do not depend on the gradient, making them much useful in applications. 

\bigskip

  To conclude the introduction,  we mention the plan of the paper. The results relative to the fractional (variational) $p$-Laplace operator  are the content of Section \ref{pFract}. Section  \ref{graddepend} contains the results on the nonlocal gradient dependent operators, while in the Appendix \ref{appendicite} we insert some very simple, basic integral asymptotics.
 	
\section{The fractional $p$-Laplacian} \label{pFract}
   
 \subsection{An asymptotic expansion}
   

	Let $p\geq2$. Throughout Section \ref{pFract}, we consider $u$ to be a non constant function.  To simplify the formula in \eqref{intrpeq1234}, we introduce the following notations:
	\eqlab{ \label{fru2}
\D_r^{s,p} u(x): = \int_{|y|>r} \left(\frac{|u(x) -u(x-y)|}{|y|^s}\right)^{p-2}  \frac{dy}{|y|^n(|y|^2-r^2)^s},
	}
	and
	\eqlab{ \label{m1}
	\M^{s,p}_r u(x):=
	&\left( \D_r^{s,p} u(x)\right)^{-1} 
	\int_{|y|>r} \left(\frac{|u(x) -u(x-y)|}{|y|^s}\right)^{p-2}  \frac{u(x-y)}{|y|^n(|y|^2-r^2)^s}\, dy.}
	To make an analogy with the local case,  we may informally say that $\D_r^{s,p} u$ plays the ``nonlocal'' role of $\nabla u(x)$ (see also 
	and Proposition \ref{ghirs}, for the limit as $s \to 1^-$), and $\M^{s,p}_r u$ the role of a $(s,p)$-mean kernel. Both $\D_r^{s,p}$ and $\M_r^{s,p}$ naturally appear when we make an asymptotic expansion for smooth functions, that we do in Theorem \ref{asymp23}.  Notice also that for $p=2$,  $\M_r^{s,2} u$  is given by \eqref{vbgd} (and $\D_r^{s,2} u(x) = c(n,s)^{-1}r^{-2s}$).
	 
	The next proposition motivates working with non-constant functions, and justifies \eqref{m1} as a good definition. 

\begin{remark} \label{posd}Let $u\colon \Rn \to \R$ be such that, for some $x\in \Rn$ there exist $z_x\in \Rn$ and $r_{x}\in (0,|x-z_x|/2)$ such that 
\eqlab{ \label{fru1} u(x) \neq u(z) \quad \forall \; z \in B_{ r_x}(z_x).}
Then there exist some  $c_x>0$ such that, for all $r< r_x$, it holds that $\D_r^{s,p} u(x)\geq c_x$.
\end{remark}

\noindent Notice that if $u$ is a continuous, non-constant function, for some $x\in \Rn$ there exist $z_x\in \Rn$ and $r_{x}\in (0,|x-z_x|/2)$ such that \eqref{fru1} is accomplished. \\
The fact that $\D_r^{s,p} u(x)$ is bounded strictly away from zero is not difficult to see, we prove it however for completeness. 
\begin{proof}
We have that 
	\[ |y|^2-r^2 \leq |y|^2,\] hence
	\[\D_r^{s,p} u(x) \geq \int_{|y|>r} \frac{|u(x) -u(x-y)|^{p-2}}{|y|^ {n+sp}} \, dy \geq \int_{\C B_r(x)}\frac{|u(x) -u(y)|^{p-2}}{|x-y|^ {n+sp}} \, dy ,\]
	 by changing variables.
	For any $r<r_x/2$, $B_{r_x}(z_x) \subset \C B_r(x)$, thus
	\[ \D_r^{s,p}u(x) \geq  \int_{B_{ r_x}(z_x) } \frac{|u(x) -u(y)|^{p-2}}{|x-y|^ {n+sp}} \, dy:=c_x,\]
	with $c_x$ positive, independent of $r$.
	\end{proof}

\begin{remark}
Notice that it is quite natural to assume that $u$ is not constant and it is  similar to what is required in the local case,
namely $\nabla u(x)\neq 0$ (see the proof of \cite[Theorem 2]{MPR}).
	\end{remark}

Using the notations in \eqref{fru2} and \eqref{m1}, we obtain the following asymptotic property for smooth functions.
\begin{theorem}\label{asymp23}
		Let  $\eta>0, x\in\Rn$ and  $u\in C^2(B_\eta(x))\cap L^{\infty}(\Rn)$. Then	
	\eqlab{ \label{expansion} 
		\D_r^{s,p} u(x)\big( u(x)- \M^{s,p}_r u(x) \big) = 
					 \frp u(x)
			 	  + 			 		  \mathcal O(r^{2-2s
			 		}) 
	}
			as $r\to 0$.
\end{theorem}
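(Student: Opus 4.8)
The plan is to reduce the left-hand side to a single weighted singular integral and compare it, region by region, with the principal value defining $\frp u(x)$, using symmetrization to extract the required cancellations. Write $\delta(y):=u(x)-u(x-y)$ and $g(t):=|t|^{p-2}t$. Combining the definitions \eqref{fru2}--\eqref{m1} and using the elementary algebraic identity $s(p-2)+2s=sp$, one checks that
\[
\D_r^{s,p}u(x)\big(u(x)-\M_r^{s,p}u(x)\big)=\int_{|y|>r}\frac{g(\delta(y))}{|y|^{n+sp}}\Big(\frac{|y|^2}{|y|^2-r^2}\Big)^s\,dy=:I_r,
\]
an absolutely convergent integral for each $r>0$ (integrable singularity $(|y|-r)^{-s}$ at $|y|=r$, decay $|y|^{-n-sp}$ at infinity; by Remark \ref{posd} the quantities involved are well defined). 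Since the weight is even in $y$ I would symmetrize, $I_r=\tfrac12\int_{|y|>r}\frac{g(\delta(y))+g(\delta(-y))}{|y|^{n+sp}}\big(\tfrac{|y|^2}{|y|^2-r^2}\big)^s\,dy$, and likewise record that for $u\in C^2(B_\eta(x))\cap L^\infty(\Rn)$ the value $\frp u(x)=\tfrac12\int_{\Rn}\frac{g(\delta(y))+g(\delta(-y))}{|y|^{n+sp}}\,dy$ is absolutely convergent.

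The one pointwise input is the elementary inequality, valid for $p\ge2$ by the mean value theorem applied to $g$ (whose derivative $(p-1)|t|^{p-2}$ is nondecreasing in $|t|$),
\[
\big|g(a)+g(b)\big|=\big|g(a)-g(-b)\big|\le (p-1)\max(|a|,|b|)^{p-2}\,|a+b|.
\]
Applying this with $a=\delta(y)$, $b=\delta(-y)$ and the second-order Taylor expansion of $u$ at $x$ (which needs only $u\in C^2$: $|a|,|b|\le C|y|$ and $|a+b|=|2u(x)-u(x+y)-u(x-y)|\le C|y|^2$ for $|y|<\eta$, while $|g(a)+g(b)|\le C$ for $|y|\ge\eta$) gives $|g(\delta(y))+g(\delta(-y))|\le C\min(|y|^p,1)$. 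I would then split
\[
I_r-\frp u(x)=\frac12\int_{|y|>r}\frac{g(\delta(y))+g(\delta(-y))}{|y|^{n+sp}}\Big[\Big(\tfrac{|y|^2}{|y|^2-r^2}\Big)^s-1\Big]\,dy-\frac12\int_{|y|<r}\frac{g(\delta(y))+g(\delta(-y))}{|y|^{n+sp}}\,dy.
\]
The inner integral is immediately bounded by $C\int_{|y|<r}|y|^{p-n-sp}\,dy\le C r^{p(1-s)}\le C r^{2-2s}$ (here and below using $p\ge 2$ and $r<1$).

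For the outer integral I would decompose $\{|y|>r\}$ into $\{r<|y|<2r\}$, $\{2r\le|y|<\eta\}$ and $\{|y|\ge\eta\}$ (for $r<\eta/2$), and use $0\le\big(\tfrac{|y|^2}{|y|^2-r^2}\big)^s-1\le C r^2/|y|^2$ when $|y|\ge2r$, and $\big(\tfrac{|y|^2}{|y|^2-r^2}\big)^s\le C r^s/(|y|-r)^s$ when $r<|y|<2r$. On $\{|y|\ge\eta\}$ the integrand is $\le C r^2|y|^{-n-sp-2}$, contributing $O(r^2)$; on $\{2r\le|y|<\eta\}$ one is left with $C r^2\int_{2r}^{\eta}\rho^{p(1-s)-3}\,d\rho$, which is $O(r^2)$, $O\!\big(r^2\log(1/r)\big)$ or $O\!\big(r^{p(1-s)}\big)$ according as $p(1-s)>2$, $=2$ or $<2$; and on $\{r<|y|<2r\}$, estimating $|y|^p\le C r^p$, $|y|^{-n-sp}\le r^{-n-sp}$ and $\int_{r<|y|<2r}(|y|-r)^{-s}\,dy\le C r^{n-s}$ yields $O\!\big(r^{p(1-s)}\big)$. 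In every case the exponent is $\ge 2-2s$, so the outer integral is $O(r^{2-2s})$ as well, and combining with the inner estimate proves \eqref{expansion}. The delicate step — and the place where $p\ge2$ is genuinely needed — is the boundary layer $\{r<|y|<2r\}$, where the weight $(|y|^2-r^2)^{-s}$ degenerates: there one must use the full strength of $|g(\delta(y))+g(\delta(-y))|\le C|y|^p$ to keep the resulting power of $r$ above the threshold $2-2s$. The one-dimensional integral asymptotics invoked throughout are the elementary ones collected in Appendix \ref{appendicite}.
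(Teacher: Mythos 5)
Your proof is correct, and its overall architecture coincides with the paper's: you rewrite the left-hand side as the single weighted integral $\int_{|y|>r}g(\delta(y))\,|y|^{-n-sp}\bigl(|y|^2/(|y|^2-r^2)\bigr)^s dy$ and split the discrepancy with $\frp u(x)$ into the missing inner region $\{|y|<r\}$ and the kernel-difference term on $\{|y|>r\}$, exactly the roles played by $I^s_\eps(r)$ and $J(r)$ in \eqref{epsr}. Where you genuinely differ is in the key pointwise estimate. The paper symmetrizes and then Taylor-expands the factor $|u(x)-u(x\pm y)|^{p-2}$ and the second difference separately (see \eqref{sumsum1}--\eqref{sumsum2}, with the remainders $T_1,\dots,T_5$), largely because those precise expansions are reused for the $s\to 1^-$ limit in Theorem \ref{limits1}; you instead use the single mean-value inequality $|g(a)+g(b)|\le (p-1)\max(|a|,|b|)^{p-2}|a+b|$, valid for $p\ge 2$ by monotonicity of $g'$, which immediately gives $|g(\delta(y))+g(\delta(-y))|\le C\min(|y|^p,1)$ and suffices here since only an upper bound on the error is needed. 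This buys a cleaner, more self-contained argument: it lets you work directly with the absolutely convergent symmetrized integral instead of passing through the $\eps\to 0^+$ limit, and your explicit treatment of the boundary layer $\{r<|y|<2r\}$ replaces the paper's rescaling $y\mapsto ry$ and appeal to the appendix estimates \eqref{eun}--\eqref{trois}. Your bookkeeping of exponents ($r^{p(1-s)}$, $r^2$, $r^2\log(1/r)$, all $\mathcal O(r^{2-2s})$ for $p\ge 2$) matches the paper's conclusion. One cosmetic point: the Taylor bounds $|\delta(\pm y)|\le C|y|$ and $|2u(x)-u(x+y)-u(x-y)|\le C|y|^2$ should be invoked for $|y|<\eta/2$, say, where the derivatives of $u$ are bounded, rather than on all of $B_\eta(x)$; this changes nothing in the estimates.
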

	
	Substituting our notations, we remark that \eqref{expansion}  is 
	\bgs{
	\int_{\Rn \setminus B_r} \left(\frac{|u(x) -u(x-y)|}{|y|^s}\right)^{p-2}   \frac{u(x)-u(x-y)}{|y|^n(|y|^2-r^2)^s}\, dy= \frp u(x)
			 	  + 
			 		  \mathcal O(r^{2-2s
			 		}) , \qquad \mbox {as $r\to 0$.} 
	}
		
\begin{proof} We note that the constants may change value from line to line. We fix an arbitrary $\bar \eps$ \textcolor{black}{(not necessarily small)}, the corresponding $r:=r(\bar \eps)\in(0,\eta/2)$ as in \eqref{bareps}, and some number $0<\eps<\min\{\bar \eps, r\}$, to be taken arbitrarily small. 

Starting from the definition, we have that
\bgs{
	 & \L^{s,p}_\eps u(x) =  \int_{\eps<|y|<r}  \frac{ |u(x)-u(x-y)|^{p-2} (u(x)-u(x-y) ) }{ |y|^{n+sp} }\, dy
	 \\
	 &\; +  \int_{|y|>r}  \frac{ |u(x)-u(x-y)|^{p-2} (u(x)-u(x-y) ) }{|y|^{n+s(p-2)} } \left( \frac{1}{|y|^{2s}} -\frac{1}{(|y|^2-r^2)^s  } \right) \, dy
	 \\ 
	 &\; + \int_{|y|>r}  \frac{ |u(x)-u(x-y)|^{p-2} (u(x)-u(x-y) ) }{|y|^{n+s(p-2)}(|y|^2-r^2)^s  } \, dy.
}
Thus we obtain that
	\eqlab{
		 \label{epsr} 
		 	&\; \L^{s,p}_\eps u(x) + \int_{|y|>r} 
			\frac{|u(x)-u(x-y)|^{p-2}  {u(x-y)}  }{|y|^{n+s(p-2)}(|y|^2-r^2)^s} dy \\ 
			=&\; u(x) \int_{|y|>r}  \frac{  |u(x)-u(x-y)|^{p-2} }{|y|^{n+s(p-2)}(|y|^2-r^2)^s} dy	
			\\
			&\; + \int_{\eps<|y|<r}  \frac{ |u(x)-u(x-y)|^{p-2} (u(x)-u(x-y) )}{|y|^{n+sp} } dy
			\\
			 &\;+\int_{|y|>r}   \frac{ |u(x)-u(x-y)|^{p-2}   (u(x)-u(x-y)  ) }{|y|^{n+s(p-2)}} \left( \frac{1}{|y|^{2s}} -\frac{1}{(|y|^2-r^2)^s} \right) dy
			  \\
			:=&\;u(x) \int_{|y|>r}  \frac{ |u(x)-u(x-y)|^{p-2} }{|y|^{n+s(p-2)}(|y|^2-r^2)^s} dy + I^s_{\eps}(r)+ J(r).
}
Since $u\in C^2(B_\eta(x))$, \textcolor{black}{we can proceed as in \eqref{reps}, and by employing} \eqref{ireps} and \eqref{jreps}, we get that

	\eqlab{\label{iepsr} 
	\lim_{\eps  \to 0^+} I^s_{\eps}(r)= \mathcal O(r^{p(1-s) }),
		}
		(see also  \cite[Lemma 3.6]{equiv}).
Looking for an estimate on $J(r)$, we split it into two parts 
	\bgs{ 
			J(r)	&=   \int_{|y|>r} \frac{ |u(x)-u(x-y)|^{p-2} ( u(x)-u(x-y) )}{|y|^{n+s(p-2)} }
			\left( \frac{1}{|y|^{2s}} -\frac{1}{(|y|^2-r^2)^s} \right)  dy
			\\
		&= r^{-sp} \int_{|y|>1} \frac{ |u(x)-u(x-ry)|^{p-2}  (u(x)-u(x-ry) )}{|y|^{n+s(p-2)} }  \left( \frac{1}{|y|^{2s}} -\frac{1}{(|y|^2-1)^s} \right) dy
		\\
		&= r^{-sp} \bigg[\int_{|y|>{\frac{\eta}r}} \frac{ |u(x)-u(x-ry)|^{p-2}  (u(x)-u(x-ry) )}{|y|^{n+s(p-2)} } \left( \frac{1}{|y|^{2s}} -\frac{1}{(|y|^2-1)^s} \right)  dy 
		\\
		&+  \int_{1<|y|<{\frac{\eta}r}} \frac{ |u(x)-u(x-ry)|^{p-2}  (u(x)-u(x-ry) ) }{|y|^{n+s(p-2)} } \left( \frac{1}{|y|^{2s}} -\frac{1}{(|y|^2-1)^s} \right)  \bigg] dy \\
		&=:r^{-sp} \left(J_1(r)+J_2(r)\right).
		}
We have that
	\[
	 |u(x)-u(x-y)|^{p-1}\leq c(|u(x)|^{p-1} + |u(x-y)|^{p-1}),
	 \]
thus we obtain the bound
	\bgs{\label{aaff}
		 |J_1(r)| \leq 
		 C \| u\|^{p-1}_{L^\infty(\Rn)}\int_{\frac{\eta}r}^\infty \frac{dt}{t^{sp+1}} 
		 \bigg| 1- \frac{1}{(1-\frac{1}{t^2})^s} \bigg|.
	}
The fact that
	\[ 
		J_1(r) =\mathcal O (r^{2+sp})
	\]
	follows from  \eqref{trois}.
For $J_2$, by symmetry we write
\bgs{
 J_2(r) =&\; \frac12 \int_{1<|y|<{\frac{\eta}r}} \frac{ |u(x)-u(x-ry)|^{p-2}  (u(x)-u(x-ry) )}{|y|^{n+s(p-2)} } \left( \frac{1}{|y|^{2s}} -\frac{1}{(|y|^2-1)^s} \right) dy 
 \\
 &\; +  \frac12 \int_{1<|y|<{\frac{\eta}r}} \frac{  |u(x)-u(x+ry)|^{p-2}  (u(x)-u(x+ry) )}{|y|^{n+s(p-2)} }  \left( \frac{1}{|y|^{2s}} -\frac{1}{(|y|^2-1)^s} \right)dy
 \\
 = &\; \frac12 \int_{1<|y|<{\frac{\eta}r}} \frac{ |u(x)-u(x-ry)|^{p-2}  (2u(x)-u(x-ry) -u(x+ry)) }{|y|^{n+s(p-2)} } \left( \frac{1}{|y|^{2s}} -\frac{1}{(|y|^2-1)^s} \right) dy
 \\
 &\; +  \frac12 \int_{1<|y|<{\frac{\eta}r}} \frac{ \left(|u(x)-u(x+ry)|^{p-2}  -|u(x)-u(x-ry)|^{p-2} \right)  (u(x)-u(x+ry) ) }{|y|^{n+s(p-2)} } \\ 
 &\; \qquad\qquad \left( \frac{1}{|y|^{2s}} -\frac{1}{(|y|^2-1)^s} \right) dy 
.
}
We proceed using \eqref{sumsum1} and \eqref{sumsum2}. For $r$ small enough, we have that 
	\eqlab{\label{strr}
			\left|J_2(R)\right|\leq &\; C r^p \int_1^{\frac{\eta}r}\rho^{p-1-s(p-2)} \left(\frac{1}{(\rho^2-1)^s}- \frac{1}{\rho^{2s}}  \right)\,d\rho
			\\
			\leq &\;C r^{p- (p-2)(1-s)} \int_1^{\frac{\eta}r} \rho  \left(\frac{1}{(\rho^2-1)^s}- \frac{1}{\rho^{2s}}  \right)\,d\rho.
					\\
			\leq &\; C r^{p-(p-2)(1-s)},
	} 
	from \eqref{eun}, with $C$ depending also on $\eta$.
	This yields that $J_2(r)=\mathcal O(r^{2+sp-2s})$.

\medskip 
\noindent 
It follows that
	\[
		J(r)= \mathcal O (r^{2-2s}).
	\]
	Looking back at \eqref{epsr}, using this and recalling \eqref{iepsr}, by sending $\eps \to 0^+$, we obtain that
	\bgs{ \label{fag}
		 \frp u(x) + \int_{|y|>r}
		\frac{ |u(x)-u(x-y)|^{p-2}  {u(x-y)} }{|y|^{n+s(p-2)} (|y|^2-r^2)^s}  dy
			=&u(x) \D_r^{s,p} u(x) 
			+\mathcal O(r^{2-2s}). }
	This concludes the proof of the Theorem.
\end{proof}

\noindent
It is a property of mean value kernels $\M_r u(x)$ that they   converge to $u(x)$ as $r \to 0^+$ both in the 
local (linear and nonlinear) and in the nonlocal linear setting. In our case, due to the presence of $\D_r^{s,p}u,$
we have this property when $\nabla u(x) \neq 0$ only for a limited range of values of $p$, a range depending on $s$ and becoming larger as $s\to 1^-$. For other values of $p$, we were not able to obtain such a result. 
More precisely, we have the following proposition. 

\begin{proposition}\label{boh1}
	Let $\eta>0$ and $x\in \Rn$. If $u\in C^2(B_\eta(x))\cap L^\infty(\Rn)$ is such that $\nabla u(x) \neq 0$, and $s,p$ are such that
	\[
	p \in \left[2,\frac{2}{1-s}\right),
	\]
	it holds that
	\[ 
	\lim_{r  \to 0^+} \M_r^{s,p} u(x) = u(x).
	\]
\end{proposition}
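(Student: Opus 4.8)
The plan is to deduce the statement directly from the asymptotic expansion of Theorem~\ref{asymp23}. Rewriting that identity as
\[
u(x)-\M_r^{s,p}u(x)=\frac{\frp u(x)+\mathcal O(r^{2-2s})}{\D_r^{s,p}u(x)},
\]
and noting that for $u\in C^2(B_\eta(x))$ the numerator stays bounded as $r\to 0^+$ (here $2-2s>0$ because $s<1$, and $\frp u(x)$ is a finite quantity), the whole matter is reduced to showing that the ``nonlocal gradient'' $\D_r^{s,p}u(x)$ diverges as $r\to 0^+$. It is exactly at this point that the hypotheses $\nabla u(x)\neq 0$ and $p<2/(1-s)$ will enter.

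To bound $\D_r^{s,p}u(x)$ from below I would first rescale $y=rz$ in \eqref{fru2}, obtaining
\[
\D_r^{s,p}u(x)=r^{-sp}\int_{|z|>1}\left(\frac{|u(x)-u(x-rz)|}{|z|^s}\right)^{p-2}\frac{dz}{|z|^n(|z|^2-1)^s}.
\]
Since $\nabla u(x)\neq 0$, the set $A:=\{z\in\Rn:\ 1<|z|<2,\ \nabla u(x)\cdot z\geq |\nabla u(x)|/2\}$ has positive Lebesgue measure. For $r<\eta/2$ and $z\in A$, a second order Taylor expansion of $u$ at $x$ gives $u(x)-u(x-rz)=r\,\nabla u(x)\cdot z-\frac{r^2}{2}\,z^\top D^2u(\xi)z$ with $\xi\in B_{2r}(x)\subset B_\eta(x)$, so that, with $M:=\sup_{B_\eta(x)}|D^2u|$, one has $u(x)-u(x-rz)\geq \frac{r}{2}|\nabla u(x)|-2Mr^2\geq \frac{r}{4}|\nabla u(x)|$ once $r\leq |\nabla u(x)|/(8M)$. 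Discarding the rest of the domain and using $1<|z|<2$ then yields, for all small $r$,
\[
\D_r^{s,p}u(x)\ \geq\ c_x\, r^{p-2-sp},\qquad c_x:=\left(\frac{|\nabla u(x)|}{4}\right)^{p-2}\int_A\frac{dz}{|z|^{s(p-2)}|z|^n(|z|^2-1)^s}\ >\ 0,
\]
where $c_x$ is finite because the residual integral over $A$ has only the $|z|=1$ singularity, which is integrable since $s<1$.

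Finally, the exponent is $p-2-sp=p(1-s)-2$, which is strictly negative precisely when $p<2/(1-s)$; under this assumption $r^{p-2-sp}\to+\infty$, hence $\D_r^{s,p}u(x)\to+\infty$, and the first paragraph gives $\M_r^{s,p}u(x)\to u(x)$. The only delicate point in this argument is the bookkeeping in the middle step — combining the scaling, the choice of $A$, and the Taylor remainder so as to extract the sharp divergence rate $r^{p-2-sp}$; everything else is immediate from Theorem~\ref{asymp23}. As a consistency check, for $p=2$ this rate is $r^{-2s}$, in agreement with $\D_r^{s,2}u(x)=c(n,s)^{-1}r^{-2s}$.
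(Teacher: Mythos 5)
Your proof is correct and follows essentially the same route as the paper: both reduce the claim, via Theorem~\ref{asymp23}, to showing $\D_r^{s,p}u(x)\to\infty$, and both extract the lower bound $\D_r^{s,p}u(x)\geq c_x\, r^{p(1-s)-2}$ from a first-order Taylor expansion of $u$ near $x$ on an annulus of radius comparable to $r$. The only difference is cosmetic — the paper works directly on $B_{2r}\setminus B_r$ with an intermediate point $\xi$ and the angular identity \eqref{cnp56}, while you rescale and restrict to a cone where $\nabla u(x)\cdot z$ is bounded below — and the exponent and conclusion are identical.
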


\begin{proof}
	There is some $r\in(0,\eta/4)$ such that $\nabla  u(y) \neq 0$ for all $y\in B_{2 r}(x)$. Then
	\bgs{
		\D_r^{s,p} u(x) \geq &\; \int_{B_{2r} \setminus B_r} \frac{ |u(x)-u(x-y)|^{p-2} }{|y|^{n+s(p-2)} (|y|^2-r^2)^s} \, dy 
		\\
		= &\; \int_{B_{2r} \setminus B_r}  \left|\nabla u(\xi) \cdot \frac{y}{|y|} \right| ^{p-2}  |y|^{(p-2)(1-s) -n} (|y|^2-r^2)^{-s} \, dy ,
	}
	for some $\xi \in B_{2r}(x)$. 
	Therefore, using \eqref{cnp56}
	\bgs{
		\D_r^{s,p} u(x)  \geq C_{p,n} |\nabla u(\xi)|^{p-2} r^{(p-2)(1-s) -2s},
	}
	which for $p$ in the given range, allows to say that
	\[ 
	\lim_{r  \to 0^+} \D_r^{s,p} u(x)= \infty.
	\]
	From Proposition \ref{posd} and Theorem \ref{asymp23}, we obtain
	\bgs{ 	
		\lim_{r  \to 0^+} \left( u(x) - \M_r^{s,p} u(x) \right) 
		= \lim_{r  \to 0^+} 
		\left(\D_r^{s,p} u(x) \right)^{-1} 
		\left( (-\Delta)^s_p u(x)  + \mathcal O (r^{2-2s})  \right),
	}
	and the conclusion is settled.
\end{proof}


\subsection{Viscosity setting}\label{visc}
For the viscosity setting of the $(s,p)$-Laplacian, see the paper \cite{Lindgren} (and also \cite{caffy,giampy,equiv}). 
As a first thing, we recall the definition of viscosity solutions.

\begin{definition} A function $u\in L^\infty(\Rn)$, upper (lower) semi-continuous in $\overline \Omega$ is a viscosity subsolution  (supersolution) in $\Omega$ of 
	\[
		(-\Delta)_p^s u=0, \qquad \mbox{ and we write } \quad (-\Delta)_p^s u \leq \,(\geq) \,  0
	\]
	if for every $x\in\Omega$, any neighborhood $U=U(x)\subset \Omega$ and any $\varphi \in C^2(\overline U)$ such that
	\eqlab{\label{visc23} 
		& \varphi (x)=u(x)
		\\
		& \varphi(y)> \, (<) \,u(y), \quad \mbox{ for any } y\in U\setminus \{x\},
		}
	if we let
		\syslab[v =]{&\varphi , \quad \mbox{ in }  U	\\
							&u, \quad \mbox{ in } \Rn\setminus U	
							\label{vvv234} 
							}
				then
				 \bgs{
				 	(-\Delta)_p^s v (x) \leq \,(\geq)\,0.
				 }			
	A viscosity solution of $(-\Delta)_p^s u =0$ is a (continuous) function that is both a subsolution and a supersolution. 
	\end{definition}	
	
	We define here what we mean when we say that an asymptotic expansion holds in the viscosity sense. 
	\begin{definition} \label{meanvisc23}Let $u \in L^\infty(\Rn)$ be upper (lower) semi-continuous in $\Omega$. We say that 
\bgs{
\label{fggd}
	\textcolor{black}{	\D^{s,p}_r u(x)  \big( u(x)- \M^{s,p}_r u(x)\big)  =  o_r(1)}
	 \qquad \mbox{ as } \quad r  \to 0^+
	}
	holds in the viscosity sense
if for any 
	neighborhood $U=U(x)\subset \Omega$ and any $\varphi \in C^2(\overline U)$ such that \eqref{visc23} holds, and
if we let $v$ be defined as in \eqref{vvv234}, then both
\bgs{
\label{fggd}
	\liminf_{r \to 0^+} \D^{s,p}_r u(x)  \big( u(x)- \M^{s,p}_r u(x)\big)
	  \geq 0 
	}
	and
	\bgs{
\label{fggd}
	\limsup_{r \to 0^+}  \D^{s,p}_r u(x)  \big( u(x)- \M^{s,p}_r u(x)\big) \leq 0 
	}
	hold point wisely.
\end{definition}	

The result for viscosity solutions is a consequence of the asymptotic expansion for smooth functions, and goes as follows.
\begin{theorem}\label{theorem} Let $\Omega\subset \Rn$ be an open set and let $u\in C(\Omega)\cap L^{\infty}(\Rn)$. Then 
\[ 
	\frp u(x) =0 
	\]
in the viscosity sense if and only if 
	\eqlab{ \label{peq123}
	 			 		\D^{s,p}_r u(x)  \big( u(x)- \M^{s,p}_r u(x)\big)=
	 			 		o_r(1)
	 \qquad \mbox{ as } \quad r  \to 0^+
			 } 
	holds for all $x\in \Omega$ in the viscosity sense. 	
\end{theorem}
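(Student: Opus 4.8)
The plan is to transfer the pointwise asymptotics of Theorem~\ref{asymp23} into the viscosity framework by the now-standard scheme (cf.\ \cite{MPR} and, in the nonlocal linear case, \cite{ClaMarNA}). The sole analytic input is the identity established in Theorem~\ref{asymp23}, namely
\[
\D^{s,p}_r \psi(x)\big(\psi(x)-\M^{s,p}_r \psi(x)\big)=\frp\psi(x)+\mathcal O(r^{2-2s}),\qquad r\to 0^+,
\]
valid for every $\psi\in C^2(B_\eta(x))\cap L^\infty(\Rn)$, together with the remark that $2-2s>0$ for $s\in(0,1)$, so that the remainder is an $o_r(1)$. Given $x\in\Omega$, a neighbourhood $U=U(x)\subset\Omega$ and a test function $\varphi\in C^2(\overline U)$ satisfying \eqref{visc23}, I will apply this identity to the function $v$ of \eqref{vvv234}, which lies in $C^2(U)\cap L^\infty(\Rn)$, and compare $\D^{s,p}_r v(x)(v(x)-\M^{s,p}_r v(x))$ with $\frp v(x)$; recall that both the notion of viscosity solution and Definition~\ref{meanvisc23} are phrased precisely in terms of such a $v$.

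Assume first $\frp u=0$ in the viscosity sense. Fix $x$, $U$ and $\varphi$ touching $u$ from above at $x$ (the case $\varphi>u$ on $U\setminus\{x\}$ in \eqref{visc23}), and let $v$ be as in \eqref{vvv234}. Since $u$ is a viscosity subsolution, $\frp v(x)\le 0$; plugging this into the identity above and letting $r\to 0^+$, the remainder vanishes and
\[
\limsup_{r\to 0^+}\D^{s,p}_r v(x)\big(v(x)-\M^{s,p}_r v(x)\big)\le 0.
\]
The symmetric argument with $\varphi$ touching $u$ from below, using that $u$ is a viscosity supersolution, gives $\frp v(x)\ge 0$ and hence $\liminf_{r\to0^+}\D^{s,p}_r v(x)(v(x)-\M^{s,p}_r v(x))\ge 0$. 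By Definition~\ref{meanvisc23} this is exactly the assertion that \eqref{peq123} holds in the viscosity sense.

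Conversely, assume \eqref{peq123} holds in the viscosity sense, fix $x$, $U$ and $\varphi$ touching $u$ from above at $x$, and let $v$ be as in \eqref{vvv234}. By hypothesis $\limsup_{r\to0^+}\D^{s,p}_r v(x)(v(x)-\M^{s,p}_r v(x))\le 0$; but by Theorem~\ref{asymp23} this quantity equals $\frp v(x)+\mathcal O(r^{2-2s})$, whose limit as $r\to0^+$ is the constant $\frp v(x)$. Hence $\frp v(x)\le 0$, which is the defining inequality for $u$ being a viscosity subsolution. Arguing in the same way with test functions touching $u$ from below shows $u$ is a supersolution, and therefore a viscosity solution.

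Since Theorem~\ref{asymp23} is already available, no genuine obstacle remains: the argument is essentially a bookkeeping of signs, the only delicate points being the consistent pairing of ``touching from above / subsolution / $\limsup\le 0$'' with ``touching from below / supersolution / $\liminf\ge 0$'', and the use of $2-2s>0$, which is what legitimizes the clean $o_r(1)$ formulation. In the borderline case $s=1$ the remainder would only be $\mathcal O(1)$ and a finer expansion would be required, consistently with the $o(r^2)$ error appearing in the local $p$-Laplacian formula \eqref{mediav}. One should also dispose of the trivial constant case separately and note that the $C^2$ test functions occurring at genuine contact points are non-constant, so that $\D^{s,p}_r v(x)$ is meaningful, cf.\ Remark~\ref{posd}.
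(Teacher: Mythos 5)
Your proposal is correct and follows exactly the paper's route: the paper's own proof simply applies Theorem~\ref{asymp23} to the test function $v$ of \eqref{vvv234} and notes that the $\mathcal O(r^{2-2s})$ remainder is $o_r(1)$ since $s<1$. Your write-up merely spells out the sub/supersolution sign bookkeeping that the paper leaves implicit, so there is nothing substantively different to compare.
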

 
\begin{proof}
For $x\in \Omega$ and any $U(x)$ neighborhood of $x$, defining 
		$v$ as in \eqref{vvv234}, we have that $v\in C^2(U(x))\cap L^\infty(\Rn)$.	
		By Theorem \ref{asymp23} we have that
		\eqlab{
			\label{asaf23} 
				\D_r^{s,p}v(x)   \big (v(x)- \M_r^{s,p} v(x) \big) =(-\Delta)^s_p v(x) + \mathcal O(r^{2-2s}),
		}
which allows to obtain the conclusion.

\end{proof}

\subsection{Asymptotics as $s\to 1^-$}\label{asymp-sec}

\vskip2pt
\noindent
 	We prove here that sending $s\to 1^-$, for a smooth enough function the fractional $p$-Laplace operator approaches the $p$-Laplacian, defined as
 	\eqlab{\label{locpl}
 		- \Delta_p u= \mbox{div} (|\nabla u|^{p-2} \nabla u)
 		.}
 	The result is known in the mathematical community, see \cite{IshiiNakamura}. We note for the interested reader that the limit case $s \to 1^-$ for fractional problems has an extensive history, see, e.g. \cite{BBM,ponce}.

 	We give here a complete proof of this result, on the one hand for the reader convenience and 
 	on the other hand since some estimates here introduced are heavily
 	used throughout Section 2.
 
\begin{theorem}\label{limits1}
Let $\Omega\subset \Rn$ be an open set and let  $u\in C^{2}(\Omega) \cap L^\infty(\Rn)$. Then  
\bgs{
		 \lim_{s\to 1^-} (1-s) (-\Delta)_p^s u(x) =-C_{p,n} \, \Delta_p u(x),}
		 where $C_{p,n}>0$		 
	for every $x\in \Omega$ such that $\nabla u(x) \neq 0$.
	\end{theorem}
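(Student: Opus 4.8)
The plan is to adapt the classical Bourgain--Brezis--Mironescu argument to the nonlinear kernel $F(t):=|t|^{p-2}t$, working with the symmetric form of the operator and combining a near/far splitting around $x$ with the second order Taylor expansion of $u$. First I would fix $\delta>0$ with $B_\delta(x)\subset\Omega$, set $L(y):=\nabla u(x)\cdot y$, and use
\[
\frp u(x)=\frac12\int_{\Rn}\frac{F(u(x)-u(x-y))+F(u(x)-u(x+y))}{|y|^{n+sp}}\,dy ,
\]
which for $u\in C^2$ near $x$ is absolutely convergent: near $x$ the numerator equals $[F(u(x)-u(x-y))-F(L(y))]+[F(u(x)-u(x+y))-F(-L(y))]$ (since $F(L)+F(-L)=0$), which is $\mathcal O(|y|^p)$ by $|F(a)-F(b)|\le C_p(|a|+|b|)^{p-2}|a-b|$ and $u\in C^2$. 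Splitting $\int_{\Rn}=\int_{B_\delta}+\int_{\Rn\setminus B_\delta}$, the far part obeys $|F|\le(2\|u\|_{L^\infty(\Rn)})^{p-1}$, hence after multiplication by $(1-s)$ it is $\le C(1-s)\delta^{-sp}\to 0$ as $s\to1^-$.

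On $B_\delta$ I would insert $u(x)-u(x-y)=L(y)+Q(y)+\rho_-(y)$ and $u(x)-u(x+y)=-L(y)+Q(y)+\rho_+(y)$, where $Q(y):=-\tfrac12\langle D^2u(x)y,y\rangle$ (automatically even) and $|\rho_\pm(y)|=o(|y|^2)$. Using $F(L)+F(-L)=0$, $F'(-L)=F'(L)=(p-1)|L|^{p-2}$, and estimating $F(b+h)-F(b)-F'(b)h$ through the elementary bounds for $t\mapsto|t|^{p-2}$ (constant if $p=2$; H\"older of exponent $p-2$ if $2<p\le3$; of Lipschitz type with factor $|t|^{p-3}$ along the relevant segment if $p\ge3$), one obtains on $B_\delta$
\[
F(u(x)-u(x-y))+F(u(x)-u(x+y))=2\,F'(L(y))\,Q(y)+E(y) ,
\]
where $E(y)=o(|y|^p)+\mathcal O(|y|^{p+\alpha})$ for some $\alpha>0$, so that $(1-s)\int_{B_\delta}|E(y)|\,|y|^{-n-sp}\,dy=o_\delta(1)$ uniformly for $s$ close to $1$ (here the elementary integral asymptotics of Appendix~\ref{appendicite} are used). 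Thus only $(1-s)\int_{B_\delta}F'(L(y))Q(y)\,|y|^{-n-sp}\,dy$ survives in the limit.

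Writing $A:=\int_{\Sf}|\nabla u(x)\cdot\omega|^{p-2}\langle D^2u(x)\omega,\omega\rangle\,d\sigma(\omega)$, a polar-coordinate computation gives
\[
(1-s)\int_{B_\delta}\frac{F'(L(y))\,Q(y)}{|y|^{n+sp}}\,dy=-\frac{p-1}{2}\,(1-s)\Big(\int_0^\delta r^{p(1-s)-1}\,dr\Big)A=-\frac{(p-1)\,\delta^{p(1-s)}}{2p}\,A ,
\]
which tends to $-\tfrac{p-1}{2p}A$ as $s\to1^-$, the radial integral being $\delta^{p(1-s)}/(p(1-s))$ and $\delta^{p(1-s)}\to1$. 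To evaluate $A$ I would rotate coordinates so that $\nabla u(x)=|\nabla u(x)|e_1$: the matrix $\int_{\Sf}|\omega_1|^{p-2}\omega_i\omega_j\,d\sigma$ is diagonal, equal to $(p-1)\beta$ in the $e_1$-direction and to $\beta$ in each orthogonal direction, with $\beta:=\int_{\Sf}|\omega_1|^{p-2}\omega_2^2\,d\sigma>0$ and $\int_{\Sf}|\omega_1|^{p}\,d\sigma=(p-1)\beta$ (a one-line consequence of the Gamma-function formula for moments on $\Sf$ together with $\Gamma(z+1)=z\Gamma(z)$). Hence, by the decomposition $\Delta_p u=|\nabla u|^{p-2}\big(\Delta u+(p-2)|\nabla u|^{-2}\langle D^2u\,\nabla u,\nabla u\rangle\big)$,
\[
A=|\nabla u(x)|^{p-2}\beta\Big(\Delta u(x)+(p-2)\,\tfrac{\langle D^2u(x)\nabla u(x),\nabla u(x)\rangle}{|\nabla u(x)|^2}\Big)=\beta\,\Delta_p u(x) ,
\]
so the surviving term tends to $-\tfrac{(p-1)\beta}{2p}\,\Delta_p u(x)$, and $C_{p,n}=\tfrac{(p-1)\beta}{2p}>0$.

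To finish I would run the standard iterated limit: the above gives, for every admissible $\delta$, that $\limsup_{s\to1^-}\big|(1-s)\frp u(x)+C_{p,n}\Delta_p u(x)\big|$ is bounded by a quantity vanishing as $\delta\to0$; since the left-hand side does not depend on $\delta$, it must be $0$, which is the assertion. I expect the crux to be the error analysis of the second step: $F(t)=|t|^{p-2}t$ and its first order Taylor remainder are worst behaved near $t=0$, i.e. near the hyperplane $\{\nabla u(x)\cdot y=0\}$, where $|t|^{p-2}$ is merely H\"older-continuous (for $2<p<3$) and $F''$ is singular, so one must choose the correct elementary inequality in each range of $p$ and check that $E(y)$ is smaller than the critical size $|y|^p$ by a positive power (or at least by an $o(1)$ factor), so as to absorb the factor $(1-s)^{-1}$ produced by the radial integration. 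The remaining steps — the far-field estimate, the polar-coordinate computation and the sphere-moment identity — are routine, and the hypothesis $\nabla u(x)\ne0$ enters only through the angular weight $|\nabla u(x)\cdot\omega|^{p-2}$.
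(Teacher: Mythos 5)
Your proposal is correct and follows essentially the same route as the paper's proof of Theorem \ref{limits1}: symmetrization in $y\mapsto -y$, a near/far splitting, a second-order Taylor expansion with the nonlinearity $|t|^{p-2}t$ controlled by H\"older/Lipschitz bounds according to the range of $p$, and the same spherical-moment identity ($\gamma_p'/\gamma_p=p-1$) evaluated after rotating $\nabla u(x)/|\nabla u(x)|$ onto a coordinate axis. The only (cosmetic) difference is that you linearize $F(t)=|t|^{p-2}t$ directly at $\pm\nabla u(x)\cdot y$, exploiting $F(L)+F(-L)=0$ and $F'(L)=F'(-L)$, whereas the paper regroups the symmetrized integrand into the terms $I_{r,\eps}$ and $J_{r,\eps}$ of \eqref{reps}; both routes produce the leading angular density $-(p-1)|\nabla u(x)\cdot\omega|^{p-2}\langle D^2u(x)\omega,\omega\rangle$ and the constant $C_{p,n}=\gamma_p(p-1)/(2p)$.
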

	\begin{proof}
	Since $u\in C^2(\Omega)$, for any $x\in \Omega$ we have that for any $\bar \eps>0$ there exists $ r= r(\bar \eps)>0$ such that 
	\eqlab{ 
	\label{bareps} 
	\mbox{ for any } y\in B_r(x) \subset \Omega, \qquad |D^2u(x)-D^2u(x+y)| < \bar \eps.
	} 
	We fix an arbitrary $\bar \eps$ (as small as we wish), the corresponding $r$ and some number $0<\eps<\min\{\bar \eps, r\}$, to be taken arbitrarily small.
	\\
	\noindent We notice that
	\eqlab{ \label{jkk} (-\Delta)_p^s u(x)= \lim_{\eps \to 0} \L^{s,p}_\eps u(x) = \L^{s,p}_r u(x) +  \lim_{\eps \to 0} \left( \L^{s,p}_\eps u(x)  - \L^{s,p}_r u(x)\right).
	}
	For the first term in this sum, we have that
	\bgs{
	\L^{s,p}_r u(x) = &\;
		\int_{|y|>r}\frac{ |u(x)-u(x-y)|^{p-2} (u(x)-u(x-y))} {|y|^{n+sp}}\, dy
	\leq 2^{p-1} \|u\|_{L^\infty(\Rn)}^{p-1} \omega_n \int_r^\infty \rho^{-1-sp}\, d\rho
	\\
	=&\; C(n,p,\|u\|_{L^\infty(\Rn)}) \frac{r^{-sp}}{sp}.
	}
	Notice that
	\eqlab{
		\label{pnonloc}
				\lim_{s \to 1^-} (1-s)\L^{s,p}_r u(x) = 0.
				}
	Now by symmetry
	\eqlab{ \label{reps}
			&\;2\Big( \L_\eps^{s,p} u(x)  -\L_r^{s,p} u(x) \Big)= 
			2\int_{B_r\setminus B_\eps} \frac{ |u(x)-u(x-y)|^{p-2} (u(x)-u(x-y))} {|y|^{n+sp}}\, dy
			\\
			=&\; \int_{B_r\setminus B_\eps} \frac{ |u(x)-u(x-y)|^{p-2} (u(x)-u(x-y))} {|y|^{n+sp}}\, dy 
			\\
			&\;+ \int_{B_r\setminus B_\eps} \frac{ |u(x)-u(x+y)|^{p-2} (u(x)-u(x+y))} {|y|^{n+sp}}\, dy 
			\\
			= &\; \int_{B_r\setminus B_\eps}  \frac{ |u(x)-u(x-y)|^{p-2} (2u(x)-u(x-y) -u(x+y))} {|y|^{n+sp}}\, dy 
			\\
			&\;+\int_{B_r\setminus B_\eps} \frac{ \Big(|u(x)-u(x+y)|^{p-2} - |u(x)-u(x-y)|^{p-2}\Big) (u(x)-u(x+y))} {|y|^{n+sp}}\, dy
			\\
			=:&\; I_{r,\eps} (x) + J_{r,\eps} (x). 
						}
			Using a Taylor expansion, there exist $\underline \delta, \overline \delta   \in (0,1)$ such that 
			\[
				u(x)-u(x-y) =  \nabla u(x) \cdot y -\frac12 \langle D^2 u(x- \underline \delta y)y,y\rangle ,\qquad 	u(x)-u(x+y) = - \nabla u(x) \cdot y -\frac12 \langle D^2 u(x+\overline\delta  y)y,y\rangle.
				\]
			Having that $|\underline \delta y|, |\overline \delta y|\leq |y|<r$, recalling \eqref{bareps}, we get that $\left|\langle (D^2 u(x)-D^2 u(x- \underline \delta y))y,y\rangle\right| \leq \bar \eps |y|^2$, hence
				\eqlab{\label{11}
				 2u&\;(x)-u(x-y)-u(x+y) =-\langle D^2u(x) y,y\rangle  \\
				 &\;+ \frac12 (\langle D^2u(x) y,y\rangle -\langle D^2u(x- \underline \delta y) y,y\rangle)
				  + \frac12 (\langle D^2u(x) y,y\rangle  -\langle D^2u(x+ \overline \delta y) y,y\rangle)
				 \\
				 =&\; -\langle D^2u(x) y,y\rangle  + T_1,\qquad	\qquad\qquad\qquad  \mbox{ with } \; \;  |T_1|\leq \bar \eps |y|^2.
			} 			
	{Also denoting $\omega ={y}/{|y|} \in \Sf$ and taking the Taylor expansion for the function $f(x)=|a-x b|^{p-2}$, we obtain
		\eqlab{\label{ff123} |u(x)-u(x-y)|^{p-2}  =&\;
			|y|^{p-2} \big|\nabla  u(x) \cdot \omega	- \frac{|y|}2\langle D^2 u(x- \underline \delta y)\omega, \omega\rangle \big|^{p-2} 
			\\
			= &\; |y|^{p-2} \big|\nabla  u(x) \cdot \omega	
			- \frac{|y|}2
			\left( \langle D^2 u(x )\omega, \omega\rangle + \langle\left( D^2 u(x- \underline \delta y)-D^2 u(x )\right)\omega, \omega\rangle\right
			)\big|^{p-2}  
			\\=&\; |y|^{p-2} \big|\nabla  u(x) \cdot \omega	- \frac{|y|}2\left( \langle D^2 u(x )\omega, \omega\rangle + \mathcal O(\bar \eps)\right) \big|^{p-2} 
			\\			=&\;				|y|^{p-2} | \nabla u(x)\cdot \omega|^{p-2}+  T_2, 
										  \qquad\qquad\qquad \mbox{ with }\; \; |T_2|\leq C |y|^{p-1}.}
			}
	Thus we have that
			\eqlab{\label{sumsum1}
						 |u(x)-u(x-y)|^{p-2} (2u(x)-u(x-y)-u(x+y) ) = &\;
						- |y|^p  | \nabla u(x)\cdot \omega|^{p-2}\langle D^2u(x) \omega ,\omega \rangle \\
						&\; + T_1  |y|^{p-2}  | \nabla u(x)\cdot \omega|^{p-2} + T_3, 
						\\
							&\;  \qquad	 \mbox{ with } \; \;   |T_3|\leq C  |y|^{p+1}.						
					}
						Using this and passing to hyper-spherical coordinates, with the notations in \eqref{reps} we have that
		\eqlab{ \label{ireps}
			I_{r,\eps}(x) =&\; -\int_{\eps}^r\rho^{p-1-sp}  \, d\rho 
		\int_{\Sf}  |\nabla u(x)\cdot \omega|^{p-2} \langle D^2u(x)\omega,\omega\rangle\,   d\omega
		+ I^1_{r,\eps}(x)+I^2_{r,\eps}(x)
		\\
		= &\;- \frac{r^{p(1-s)}-\eps^{p(1-s)} }{p(1-s)} \int_{\Sf} |\nabla u(x)\cdot \omega|^{p-2} \langle D^2u(x)\omega,\omega\rangle \, d\omega  +I^1_{r,\eps}(x)+I^2_{r,\eps}(x)	.
			}
			With the above notations, we have that
			\[  \left|I^1_{r,\eps}(x)\right| \leq  \bar \eps \, C \frac{r^{p(1-s)}-\eps^{p(1-s)} }{p(1-s)} 
			\] 
						and that
			\[ \left|I^2_{r,\eps} (x)\right| \leq C \int_{B_r\setminus B_\eps} |y|^{p-sp}\, dy = C \,  \frac{r^{p(1-s)+1} -\eps^{p(1-s)+1}}{p(1-s) +1} 
			.\]
				This means that
				\[ \lim_{s\to 1^-} \lim_{\eps \to 0^+}(1-s)  \left( I^1_{r,\eps}(x)+  I^2_{r,\eps}(x) \right)=\mathcal O( \bar \eps) .\]
				Thus we get
				\[
				\lim_{s\to 1^-} \lim_{\eps \to 0^+}(1-s)  I_{r,\eps}(x) = -\frac1p\int_{\Sf} |\nabla u(x)\cdot \omega|^{p-2} \langle D^2u(x)\omega,\omega\rangle \, d\omega +\mathcal O( \bar \eps).
				\]
Using again that  $\left|\langle (D^2 u(x)-D^2 u(x- \underline \delta y))y,y\rangle\right| \leq   \bar \eps |y|^2$, we also have that
	\bgs{
				u(x)-u(x-y) = &\; \nabla u(x) \cdot y- \frac12 \langle D^2 u(x)y,y\rangle+\frac12\left(\langle D^2 u(x)y,y\rangle-\langle D^2 u(x- \underline \delta y)y,y\rangle\right) 
			\\ 
			= &\; \nabla u(x) \cdot y- \frac12 \langle D^2 u(x)y,y\rangle+\mathcal O(\bar \eps) |y|^2 
			\\
										 	u(x)-u(x+y) =&\;   - \nabla u(x) \cdot y- \frac12 \langle D^2 u(x)y,y\rangle+\frac12\left(\langle D^2 u(x)y,y\rangle-\langle D^2 u(x+\overline \delta y)y,y\rangle\right)
							 	\\
							 	=& \;- \nabla u(x) \cdot y- \frac12 \langle D^2 u(x)y,y\rangle+\mathcal O(\bar \eps) |y|^2 .
				}
				Taking the second order expansion (i.e, taking the following order of the expansion in \eqref{ff123},  with second order  remainder)	we obtain 
				\bgs{
		&\; |u(x)-u(x+y)|^{p-2}-|u(x)-u(x-y)|^{p-2} \\
		=&\;    |y|^{p-1} (p-2)  (\nabla u(x)\cdot \omega)  
		|\nabla u(x)\cdot \omega)|^{p-4} 
		\left(\langle D^2u(x)\omega, \omega\rangle + \mathcal O (\bar \eps)\right)  + T_4, \qquad |T_4| \leq C|y|^{p}.
	}
	Thus
	\eqlab{ \label{sumsum2}
			&(|u(x)-u(x+y)|^{p-2}-|u(x)-u(x-y)|^{p-2} ) (u(x)-u(x+y)) \\
			&\; =  -   |y|^{p} (p-2) 
		|\nabla u(x)\cdot \omega)|^{p-2} 
		\left(\langle D^2u(x)\omega, \omega\rangle + \mathcal O (\bar \eps)\right)  + T_5,	\qquad\qquad\qquad\;\quad |T_5| \leq C|y|^{p+1} .
		}
		Therefore, with the notation in \eqref{reps}, we get that
	\eqlab{ \label{jreps}
	J_{r,\eps}(x) = -(p-2)\frac{r^{p(1-s)}-\eps^{p(1-s)}}{p(1-s)}
	 \left( \int_{\Sf} \
		|\nabla u(x)\cdot \omega)|^{p-2} \langle D^2u(x)\omega, \omega\rangle  d\omega + \mathcal O(\bar \eps)  \right)
		+ \tilde J_{r,\eps}(x),
		}
		and 
		\bgs{ \left|\tilde J_{r,\eps}(x)\right| \leq  	C \frac{r^{p(1-s)+1}-\eps^{p(1-s)+1}}{p(1-s)+1}.
		}
		We obtain that
		\[ \lim_{s\to 1^-} \lim_{\eps  \to 0^+} J_{r,\eps}(x)= -\frac{p-2}p \int_{\Sf} \
		|\nabla u(x)\cdot \omega)|^{p-2}  \langle D^2u(x)\omega, \omega\rangle\, d\omega +\mathcal O(\bar \eps).
		\]
		\textcolor{black}{Summing the limits for $I_{r,\eps}(x)$ and $J_{r,\eps}(x)$ we obtain
		\[
			\lim_{s\to 1^-} \lim_{\eps  \to 0^+}  \left( \L^{s,p}_\eps u(x)  - \L^{s,p}_r u(x)\right) =  -\frac{p-1}{2p} \int_{\Sf} \
		|\nabla u(x)\cdot \omega)|^{p-2}  \langle D^2u(x)\omega, \omega\rangle\, d\omega +\mathcal O(\bar \eps).
		\]}
		Using this and \eqref{pnonloc} into \eqref{jkk}, it follows that
		\bgs{
		 \lim_{s\to 1^-} (1-s) (-\Delta)_p^s u(x)=&\; \lim_{s\to 1^-} (1-s) \left( \L^{s,p}_r u(x) +  \lim_{\eps  \to 0^+} \left( \L^{s,p}_\eps u(x)  - \L^{s,p}_r u(x)\right) \right) \\
		 =&\;   -\frac{p-1}{2p} \int_{\Sf} \
		|\nabla u(x)\cdot \omega)|^{p-2}  \langle D^2u(x)\omega, \omega\rangle\, d\omega +\mathcal O(\bar \eps).
		}
		Sending $\bar \eps $ to zero, we get that
		\bgs{
		 \lim_{s\to 1^-} (1-s) (-\Delta)_p^s u(x) =-\frac{p-1}{2p} |\nabla u(x)|^{p-2} \int_{\Sf} 
		|z(x) \cdot \omega|^{p-2}  \langle D^2u(x)\omega, \omega\rangle\, d\omega,}
		with $z(x)=\nabla u(x)/|\nabla u(x)| $. 
We follow here the ideas in \cite{IshiiNakamura}. Let $U(x)\in \mathbb M^{n\times n}(\R)$ be an orthogonal matrix, such that $ z(x)= U(x) e_n$, where $e_k$ denotes the $k^{th}$ vector of the canonical basis of $\R^n$. Changing coordinates $\omega'=U(x)\omega$ we obtain
\bgs{ \mathcal I:= &\;
\int_{\Sf} |z(x) \cdot \omega'|^{p-2}  \langle D^2u(x)\omega', \omega'\rangle\, d\omega
= \int_{\Sf} |e_n \cdot \omega |^{p-2}  \langle U(x)^{-1}  D^2u(x)U(x) \omega, \omega\rangle\, d\omega	
\\
=&\; \int_{\Sf} |\omega_n |^{p-2}  \langle B(x) \omega, \omega\rangle\, d\omega
}
where $B(x)=U(x)^{-1}  D^2u(x)U(x)  \in \mathbb M^{n\times n}(\R)$. Then we get that
\bgs{\mathcal I= &\;
 \sum_{i,j=1}^n b_{ij}(x) \int_{\Sf} |\omega_n|^{p-2} \omega_i \omega_j d\omega	= \sum_{j=1}^n b_{jj}(x) \int_{\Sf} |\omega_n|^{p-2} \omega_j^2\, d\omega
		}
		by symmetry. 
		 Now
		\sys[ \int_{\Sf} |\omega_n|^{p-2} \omega_j^2 \, d\omega  =  ]
		{ &\gamma_p ,&& \mbox{ if   } j\neq n \\
			&\gamma_p', && \mbox{ if   }  j=n
					}
					with $\gamma_p, \gamma'_p$ two constants{\footnote{Precisely (see Lemma 2.1 in \cite{IshiiNakamura})
					\[ \gamma_p = \frac{2\Gamma\left(\frac{p-1}2\right) \Gamma\left(\frac12\right)^{n-2} \Gamma\left(\frac{3}2\right) }{\Gamma\left(\frac{p+n}2\right)},
					 \qquad  \gamma'_p =\frac{2\Gamma\left(\frac{p+1}2\right) \Gamma\left(\frac12\right)^{n-1}   }{ \Gamma\left(\frac{p+n}2\right) }.\]
					}}
					  for which $\gamma_p'/\gamma_p=p-1$,
					so
	\bgs{\mathcal I= &\;
=\gamma_p \sum_{j=1}^n b_{jj}(x) + (\gamma_p'-\gamma_p) b_{nn}(x)
= \gamma_p \left( \sum_{j=1}^n b_{jj}(x)  + (p-2) b_{nn}(x)\right)
.
		}	
		We notice that, since $U(x)$ is orthogonal and $D^2 u(x)$ is symmetric,
		\[ 	\sum_{j=1}^n b_{jj}(x)  = \mbox{Tr}B(x)=\mbox{Tr}( U(x)^{-1}  D^2u(x)U(x) )= \mbox{Tr}(D^2 u(x)) = \Delta u(x) \]
		and 
		\bgs{ b_{nn}(x)= &\; \langle U(x)^{-1}  D^2u(x)U(x) e_n, e_n \rangle =\langle  D^2u(x)U(x) e_n, U(x) e_n \rangle=\langle  D^2u(x)z(x), z(x) \rangle  \\
		=&\;|\nabla u|^{-2} \langle  D^2u(x)\nabla u(x) , \nabla u(x)  \rangle = \Delta_\infty u(x) .}
		Therefore 
		\eqlab{ \label{4} \mathcal I =  \gamma_p  \left( \Delta u(x) + (p-2) \Delta_\infty u(x) \right),}
		and this leads to
	\bgs{
		 \lim_{s\to 1^-} (1-s) (-\Delta)_p^s u(x) =-\frac{\gamma_p(p-1)}{2p} |\nabla u(x)|^{p-2}  ( \Delta u(x) + (p-2) \Delta_\infty u(x) ).}
		 Recalling \eqref{locpl} and that 
		 \bgs{
				  \Delta_p u(x)= |\nabla u(x)|^{p-2}  \left( \Delta u(x) + (p-2) \Delta_\infty u(x) \right)
		  }
		 we conclude the proof of the Lemma. 					
	\end{proof}

Next we study the asymptotic behavior of $\M_r^{s,p}$ as $s\to 1^-$.
	
\begin{lemma} \label{ghirs} Let $u\in C^1(\Omega)\cap L^\infty(\Rn)$, 
denoting
\eqlab{ \label{popo} \M_r^p u(x):= \int_{\Sf} |u(x)-u(x-r\omega)|^{p-2} u(x-r\omega) \, d\omega \left(\int_{\Sf} |u(x)-u(x-r\omega)|^{p-2} \, d\omega \right)^{-1} 
}
it holds that
\eqlab{\label{2} \lim_{s\to 1^-} \M_r^{s,p} u(x) = \M_r^p u(x)}
and that
\eqlab{ \label{1} \lim_{s\to 1^-} (1-s) \D_r^{s,p} u(x) =\textcolor{black}{\frac{1}{2r^p}} \int_{\Sf} |u(x)-u(x-r\omega)|^{p-2} \, d\omega ,}
for all $x\in \Omega$, $r>0$ such that $B_{2r}(x) \subset \Omega$. 
\end{lemma}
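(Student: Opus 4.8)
The plan is to pass to hyper‑spherical coordinates and to exploit that, as $s\to 1^-$, the weight $(|y|^2-r^2)^{-s}$ concentrates on the sphere $\bdry{B_r(x)}$, so that only the values of $u$ on $\bdry{B_r(x)}$ survive in the limit. Writing $y=\rho\omega$ with $\rho>r$ and $\omega\in\Sf$, and introducing the auxiliary functions
\[
g(\rho):=\int_{\Sf}|u(x)-u(x-\rho\omega)|^{p-2}\,d\omega,\qquad h(\rho):=\int_{\Sf}|u(x)-u(x-\rho\omega)|^{p-2}u(x-\rho\omega)\,d\omega,
\]
one has $\D_r^{s,p}u(x)=\int_r^\infty\rho^{-1-s(p-2)}(\rho^2-r^2)^{-s}g(\rho)\,d\rho$ and the numerator defining $\M_r^{s,p}u(x)$ equals $\int_r^\infty\rho^{-1-s(p-2)}(\rho^2-r^2)^{-s}h(\rho)\,d\rho$, while $\M_r^p u(x)=h(r)/g(r)$ in the notation of \eqref{popo}. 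Since $\closure{B_{3r/2}(x)}\subset B_{2r}(x)\subset\Omega$ is compact, $u$ is uniformly continuous there, so $(\rho,\omega)\mapsto u(x-\rho\omega)$ is uniformly continuous on $[0,3r/2]\times\Sf$ and hence $g,h$ are continuous near $\rho=r$ (only continuity of $u$ around $\bdry{B_r(x)}$ is needed, not $u\in C^1$); moreover $u\in L^\infty(\Rn)$ gives $|g|,|h|\le C$ on $[r,\infty)$.

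The next step is to split each of these one‑dimensional integrals at $\rho=2r$. On $(2r,\infty)$ one has $\rho^2-r^2\ge\tfrac34\rho^2$, so the tail is bounded by $C\int_{2r}^\infty\rho^{-1-sp}\,d\rho\le C(2r)^{-sp}/(sp)$, which stays bounded as $s\to1^-$; hence $(1-s)$ times the tail tends to $0$. On $(r,2r)$ I would write the integrand of $\D_r^{s,p}u(x)$ as $(\rho-r)^{-s}\psi_s(\rho)$, where $\psi_s(\rho):=\rho^{-1-s(p-2)}(\rho+r)^{-s}g(\rho)$ converges uniformly on $[r,2r]$, as $s\to1^-$, to $\psi_1(\rho):=\rho^{-(p-1)}(\rho+r)^{-1}g(\rho)$, with $\psi_1(r)=g(r)/(2r^p)$. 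The heart of the matter is then the elementary concentration identity — which may be recorded among the integral asymptotics of Appendix~\ref{appendicite} — that for any family $(\psi_s)$ converging uniformly on $[r,2r]$ to a function $\psi_1$ continuous at $r$,
\[
\lim_{s\to1^-}(1-s)\int_r^{2r}(\rho-r)^{-s}\psi_s(\rho)\,d\rho=\psi_1(r);
\]
this follows by comparing $\psi_s$ with the constant $\psi_1(r)$, since $(1-s)\int_r^{2r}(\rho-r)^{-s}\,d\rho=r^{1-s}\to1$ produces the main contribution, while the remainder is absorbed by splitting $(r,2r)$ into $(r,r+\delta)$, where $|\psi_s-\psi_1(r)|$ is small by uniform convergence and continuity, and $(r+\delta,2r)$, where $(1-s)(\rho-r)^{-s}\le(1-s)\delta^{-s}\to0$.

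Combining these observations yields $\lim_{s\to1^-}(1-s)\D_r^{s,p}u(x)=g(r)/(2r^p)$, which is exactly \eqref{1}, and, after replacing $g$ by $h$ (legitimate because $h$ is also continuous at $r$ and bounded), $\lim_{s\to1^-}(1-s)\int_r^\infty\rho^{-1-s(p-2)}(\rho^2-r^2)^{-s}h(\rho)\,d\rho=h(r)/(2r^p)$. Dividing the second limit by the first — the denominator $g(r)=\int_{\Sf}|u(x)-u(x-r\omega)|^{p-2}\,d\omega$ being positive precisely when $\M_r^p u(x)$ is meaningful in \eqref{popo}, i.e. when $u$ is not constant on $\bdry{B_r(x)}$ — gives $\lim_{s\to1^-}\M_r^{s,p}u(x)=h(r)/g(r)=\M_r^p u(x)$, that is \eqref{2}. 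The only genuinely delicate point is the uniform‑in‑$s$ passage to the limit in the concentration identity above; everything else is routine bookkeeping, and for $p=2$ the constant $1/(2r^p)$ can be cross‑checked against the explicit value $\D_r^{s,2}u(x)=c(n,s)^{-1}r^{-2s}$ together with $\sin\pi s\sim\pi(1-s)$.
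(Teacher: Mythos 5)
Your argument is correct and reaches both limits \eqref{1} and \eqref{2}, but it takes a genuinely different route from the paper's. The paper splits $\D_r^{s,p}u(x)$ at $|y|=(1+\eps)r$, disposes of the far part essentially as you dispose of your tail (via a lower bound for $|y|^2-r^2$ in terms of $|y|^2$), and then extracts the limit from the thin annulus $r<|y|<(1+\eps)r$ by \emph{integrating by parts} in the radial variable: the boundary term $\frac{(\eps r)^{1-s}}{1-s}\,\frac{|u(x)-u(x-(1+\eps)r\omega)|^{p-2}}{[(1+\eps)r]^{1+sp-2s}[(2+\eps)r]^{s}}$ carries the whole limit after multiplying by $(1-s)$ and letting first $s\to1^-$ and then $\eps\to0^+$, while the remaining integral is controlled by $C\max\{r^{-sp},r^{1-sp}\}\,\eps^{2-s}/(1-s)$ and contributes $\mathcal O(\eps)$. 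That integration by parts is precisely why the lemma assumes $u\in C^1$: one must differentiate $\rho\mapsto|u(x)-u(x-\rho\omega)|^{p-2}$. Your concentration identity replaces the integration by parts with a direct approximation-of-the-identity comparison against the constant $\psi_1(r)$, built on $(1-s)\int_r^{2r}(\rho-r)^{-s}\,d\rho=r^{1-s}\to1$ together with uniform convergence of $\psi_s$ and continuity of $g,h$ at $\rho=r$. This is more elementary, avoids differentiating $|u(x)-u(x-\rho\omega)|^{p-2}$ (a step that is in fact slightly delicate for $2<p<3$, where $t\mapsto|t|^{p-2}$ is only H\"older continuous), and shows the statement really only needs continuity and boundedness of $u$, as you observe. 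Two further points in your write-up are worth keeping explicit: the positivity $\int_{\Sf}|u(x)-u(x-r\omega)|^{p-2}\,d\omega>0$ needed to divide the two limits in \eqref{2} (left implicit in the paper, where it is covered by the standing non-constancy assumption of Section \ref{pFract}), and the $p=2$ cross-check of the constant $\frac{1}{2r^p}$ against $\D_r^{s,2}u(x)=c(n,s)^{-1}r^{-2s}$ and $\sin\pi s\sim\pi(1-s)$.
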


\begin{proof}
Let $\eps \in (0,1/2)$, to be taken arbitrarily small in the sequel.
We have that
\bgs{
		\D_r^{s,p} u(x)
		=&\;   \int_{|y|>(1+\eps) r} \frac{ |u(x)-u(x-y)|^{p-2} }{|y|^{n+sp-2s}(|y|^2-r^2)^s  } \, dy + \int_{r<|y|<(1+\eps)r} \frac{ |u(x)-u(x-y)|^{p-2} }{|y|^{n+sp-2s}(|y|^2-r^2)^s  } \, dy 
		\\
		=&\; I^{s,\eps}_1+I^{s,\eps}_2.}
		Given that for $|y|>r(1+\eps)$ one has that $|y|^2-r^2 \geq \eps(\eps+2)(1+\eps)^{-2} |y|^2$, we get
		\eqlab{ \label{ghf} 
		|I^{s,\eps}_1| \leq &\;  \frac{(1+\eps^2)^s } {\eps^s(\eps+ 2)^s} c_{n,p,\|u\|_{L^{\infty}(\Rn)}} \int_{(1+\eps)r}^\infty \rho^{-1-sp}\, d\rho 
		\\
		=&\;  \frac{(1+\eps^2)^s } {\eps^s(\eps+ 2)^s}  c_{n,p, \|u\|_{L^{\infty}(\Rn)} } \frac{ [(1+\eps)r]^{-sp}}{s} }
		and it follows that
		\[ \lim_{s\to 1^-} (1-s) I^{s,\eps}_1 =0.\] 
		On the other hand, integrating by parts we have that
		\bgs{
			 I^{s,\eps}_2=&\; \int_{\Sf}  \bigg(\int_r^{(1+\eps)r}  \frac{|u(x)-u(x-\rho \omega )|^{p-2}}{  
			 								\rho^{1 +sp-2s} (\rho^2-r^2)^{s} }\, d\rho \bigg)   d\omega 
			 	\\
			 	=&\; \int_{\Sf}  \Bigg[ \frac{(\rho-r)^{1-s} }{1-s}  \frac{|u(x)-u(x-\rho \omega )|^{p-2}  }{
			 								\rho^{1 +sp-2s}  (\rho+r)^{s} } \Bigg|_r^{(1+\eps)r} \,    d\omega
			 								\\ 
			 								&\;-\int_r^{(1+\eps)r} \frac{(\rho-r)^{1-s} }{1-s}  \frac{d}{d\rho} \left(\frac{|u(x)-u(x-\rho \omega )|^{p-2}}{\rho^{1+sp-2s} (\rho+r)^{s}}  \right) \, d\rho  \Bigg]
			 	\\
			 	=&\; \int_{\Sf} \Bigg[ \frac{(\eps r)^{1-s} }{1-s} \frac{ |u(x)-u(x-(1+\eps)r \omega )|^{p-2}  }{
			 								[(1+\eps)r]^{1 +sp-2s}  [(2+\eps)r]^{s}} \,   d\omega
			 								\\
			 								&\;-\int_r^{(1+\eps)r} \frac{(\rho-r)^{1-s} }{1-s}  \frac{d}{d\rho} \left(\frac{|u(x)-u(x-\rho \omega )|^{p-2}  }{
			 								\rho^{1 +sp-2s}  (\rho+r)^{s}}  \right) \, d\rho  \Bigg].
			 	\\
		} 
	\textcolor{black}{Notice that 
	\bgs{
	\bigg| \int_r^{(1+\eps)r} \frac{(\rho-r)^{1-s} }{1-s}  &\frac{d}{d\rho} \left(\frac{|u(x)-u(x-\rho \omega )|^{p-2} }{ 
			 								\rho^{1 +sp-2s}  (\rho+r)^{s}}  \right) \, d\rho\bigg| 
			 							\leq 
			 							 C \max\{r^{-sp},r^{1-sp}\} \frac{\eps^{2-s}}{1-s},
			 								}}
		hence
		\bgs{ \lim_{s\to 1^-} (1-s)\int_{\Sf}    &  \left[\int_r^{(1+\eps)r} \frac{(\rho-r)^{1-s} }{1-s}  \frac{d}{d\rho} \left(\frac{|u(x)-u(x-\rho \omega )|^{p-2}  }{
			 								\rho^{1 +sp-2s}  (\rho+r)^{s}}  \right) \, d\rho \right]d\omega
			 								= \mathcal O (\eps).
			 								}
	Moreover	
	\bgs{	 &	\lim_{s\to 1^-} (1-s) \int_{\Sf}  \frac{(\eps r)^{1-s} }{1-s} \frac{ |u(x)-u(x-(1+\eps)r \omega )|^{p-2} }{ 
			 								[(1+\eps)r]^{1+sp-2s}  [(2+\eps)r]^{s}}  d\omega  	
			 								\\ 
			 								 =&\;
				\frac{1}{(1+\eps)^{p-1} (2+\eps) r^{p} }	\int_{\Sf}    |u(x)-u(x-(1+\eps)r \omega )|^{p-2}  d\omega .}
Finally,  
\[ \lim_{\eps  \to 0^+} \lim_{s\to 1^-}  (1-s)  I^{s,\eps}_2 =\frac{1}{\textcolor{black}{2}r^p} \int_{\Sf} |u(x)-u(x-r \omega )|^{p-2} \,   d\omega \] and one gets \eqref{1}.
In exactly the same fashion, one proves that 
\bgs{
 \lim_{s\to 1^-}  (1-s)  & \int_{|y|>r} \left(\frac{|u(x) -u(x-y)|}{|y|^s}\right)^{p-2}  \frac{u(x-y)}{|y|^n(|y|^2-r^2)^s}\, dy 
 \\
 =&\;   \frac{1}{\textcolor{black}{2}r^p} \int_{\Sf} |u(x)-u(x-r \omega )|^{p-2} u(x-r\omega) \,   d\omega 
 }	
and \eqref{2} can be concluded.
		\end{proof}
		 
\textcolor{black}{Using the notations from Lemma \ref{ghirs}, we obtain some equivalent asymptotic expansions for the (classical) $p$-Laplacian.
		\begin{proposition} \label{mvploc}
Let $u\in C^2(\Omega)$, then the following equivalent expansions hold
		\begin{subequations} 
		\eqlab{\int_{\partial B_r} |u(x)-u(x-y)|^{p-2} \left(u(x)-u(x-y)\right) d \mathcal H^{n-1}(y)=- c_{n,p} r^p  \Delta_p u(x) +  o(r^{p}),  \label{31mvploc} }
		 \eqlab{  \Big(|\nabla u|^{p-2} + \mathcal O (r) \Big)\Big(u(x) -\M_r^p u(x) \Big) =- c_{n,p} r^2  \Delta_p u(x) +  o(r^{2}) \label{3}}
		\end{subequations}
for all $x\in \Omega$, $r>0$ such that $B_{2r}(x) \subset \Omega$. 
\end{proposition}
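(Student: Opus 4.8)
The plan is to reduce both expansions to an elementary $r$-Taylor expansion of the two scalar integrals
\[ N(r):=\int_{\Sf}|u(x)-u(x-r\omega)|^{p-2}\big(u(x)-u(x-r\omega)\big)\,d\omega,\qquad D(r):=\int_{\Sf}|u(x)-u(x-r\omega)|^{p-2}\,d\omega . \]
Indeed, once $\partial B_r$ is parametrized by $y=r\omega$ with $\omega\in\Sf$, the left-hand side of \eqref{31mvploc} is, up to the normalizing factor, exactly $N(r)$, while from \eqref{popo} one has $u(x)-\M_r^p u(x)=N(r)/D(r)$. So \eqref{31mvploc} amounts to $N(r)=-c_{n,p}\,r^{p}\,\Delta_p u(x)+o(r^{p})$, and \eqref{3} will follow from it by dividing by $D(r)$ and multiplying through by $|\nabla u(x)|^{p-2}+\mathcal O(r)$. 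Morally the statement is the $s\to1^-$ limit of Theorem \ref{asymp23} read through Lemma \ref{ghirs} and Theorem \ref{limits1}, but carrying out that limit would require controlling the $r$-remainder of Theorem \ref{asymp23} uniformly in $s$ near $1$, which is awkward; the direct route is cleaner, and it is also why the statement imports the notation of Lemma \ref{ghirs}.

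To expand $N(r)$ I would re-use verbatim the computations in the proof of Theorem \ref{limits1}. Symmetrizing $N(r)$ under $\omega\mapsto-\omega$ writes $2N(r)$ as the integral over $\Sf$ of the sum of the two expressions appearing in \eqref{sumsum1} and \eqref{sumsum2}, evaluated at $y=r\omega$ (so $|y|=r$); there the auxiliary parameter $\bar\eps$ should now be read as the $\omega$-uniform modulus of continuity of $D^2u$ at scale $r$, hence $o(1)$ as $r\to0^+$. This gives $2N(r)=-(p-1)\,r^{p}\int_{\Sf}|\nabla u(x)\cdot\omega|^{p-2}\langle D^2u(x)\omega,\omega\rangle\,d\omega+o(r^{p})$, and the spherical integral is evaluated by the orthogonal change of variables of that same proof (equation \eqref{4}): it equals $|\nabla u(x)|^{p-2}\gamma_p\big(\Delta u(x)+(p-2)\Delta_\infty u(x)\big)=\gamma_p\,\Delta_p u(x)$. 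Hence $N(r)=-\tfrac{(p-1)\gamma_p}{2}\,r^{p}\,\Delta_p u(x)+o(r^{p})$, i.e.\ \eqref{31mvploc}.

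For \eqref{3} one additionally expands $D(r)$: inserting \eqref{ff123} into its integrand gives $D(r)=r^{p-2}\big(|\nabla u(x)|^{p-2}\int_{\Sf}|e_n\cdot\omega|^{p-2}\,d\omega+\mathcal O(r)\big)$, where $\int_{\Sf}|e_n\cdot\omega|^{p-2}\,d\omega=(n+p-2)\gamma_p$ follows from the relations between $\gamma_p$ and $\gamma_p'$ recorded there. When $\nabla u(x)\neq0$, dividing the expansions of $N(r)$ and $D(r)$ yields $u(x)-\M_r^p u(x)=-\tfrac{p-1}{2(n+p-2)\,|\nabla u(x)|^{p-2}}\,r^{2}\,\Delta_p u(x)+o(r^{2})$, and multiplying by $|\nabla u(x)|^{p-2}+\mathcal O(r)$ (the cross terms being $\mathcal O(r^{3})=o(r^{2})$) produces \eqref{3} with $c_{n,p}=\tfrac{p-1}{2(n+p-2)}$. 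The degenerate case $\nabla u(x)=0$, which is not excluded in the statement, is disposed of by hand: then $\Delta_p u(x)=0$ for $p>2$, a crude second-order Taylor estimate gives $N(r)=o(r^{p})$ and $u(x)-\M_r^p u(x)=\mathcal O(r^{2})$, so both sides of \eqref{31mvploc} and of \eqref{3} are $o(r^{p})$, resp.\ $o(r^{2})$; for $p=2$ the statement is the classical spherical mean-value expansion.

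The step I expect to be the main obstacle is the uniform-in-$\omega$ control of the Taylor remainders required to pass from the pointwise expansions to the integrated ones. The bound $|T_2|\le C|y|^{p-1}$ of \eqref{ff123} (and hence $|T_3|,|T_5|\le C|y|^{p+1}$ in \eqref{sumsum1}--\eqref{sumsum2}) degenerates near the subsphere $\{\omega\in\Sf:\nabla u(x)\cdot\omega=0\}$ when $2<p<3$, the implicit constant there growing like $|\nabla u(x)\cdot\omega|^{p-3}$. This is harmless after integrating over $\Sf$, because $p-3>-1$ makes $\int_{\Sf}|\nabla u(x)\cdot\omega|^{p-3}\,d\omega$ finite, so the contribution of these remainders is still $o(r^{p})$ (resp.\ $o(r^{p-1})$ for $D(r)$); for $p\ge3$ the estimate is uniform and nothing special happens. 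Equivalently, the whole argument can be recast as a dominated-convergence computation of $\lim_{r\to0^+}N(r)/r^{p}$ and $\lim_{r\to0^+}D(r)/r^{p-2}$.
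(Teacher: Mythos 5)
Your proof is correct and follows essentially the same route as the paper's: both expand the spherical integral via the symmetrized Taylor expansions \eqref{sumsum1}--\eqref{sumsum2} and the identity \eqref{4} for the numerator, and via \eqref{ff123} together with \eqref{cnp56} for the denominator. Your extra care with the degenerate case $\nabla u(x)=0$ and with the integrability of the remainder near $\{\nabla u(x)\cdot\omega=0\}$ for $2<p<3$ goes beyond the paper's terser argument, and your leading constant $\tfrac{(p-1)\gamma_p}{2}$ in \eqref{31mvploc} is in fact the correct one --- the paper's extra factor $\tfrac1p$ is inherited from the radial integration of Theorem \ref{limits1} and does not belong in the surface-integral version, though this is immaterial since $c_{n,p}$ is left unspecified in the statement.
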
}
\begin{proof}
We use \eqref{sumsum1}, \eqref{sumsum2} and \eqref{4} with a Peano remainder, to obtain that
	\bgs{
		 & \int_{\Sf}  |u(x)-u(x-r\omega)|^{p-2}  \left(u(x)-u(x-r\omega)\right) d\omega
		 \\
		= &\; -\frac{p-1}{2p} r^p \int_{\Sf} |\nabla u(x)\cdot \omega|^{p-2}\langle D^2u(x)\omega, \omega\rangle d \omega + \textcolor{black}{ o(r^{p})}
		\\
		= &\; -r^p  \frac{\gamma_p(p-1)}{2p} |\nabla u(x)|^{p-2}  ( \Delta u(x) + (p-2) \Delta_\infty u(x) ) + \textcolor{black}{o(r^{p})} \\
		= &\; -r^p  \frac{\gamma_p(p-1)}{2p}  \Delta_p u(x) +  \textcolor{black}{ o(r^{p})} .
	}  
	From this, \eqref{31mvploc} immediately follows with a change of variables. Thus, using the notations in the \eqref{popo},
	\[
		 \left( \int_{\Sf}  |u(x)-u(x-r\omega)|^{p-2} \, d\omega \right)   \left(	u(x) -\M_r^p u(x) \right) =  - \frac{\gamma_p(p-1)}{2p}   \,r^p    \Delta_p u(x) + \textcolor{black}{ o(r^{p})} .
	\]
	Proving in the same way by \eqref{ff123}
that
		\[\int_{\Sf} |u(x)-u(x-r\omega)|^{p-2} \, d\omega = r^{p-2} \int_{\Sf}  |\nabla u(x)\cdot \omega|^{p-2} \, d\omega + \textcolor{black}{ o(r^{p-2})},\]
		and recalling that
		\eqlab{ \label{cnp56}
		\int_{\Sf}  |\nabla u(x)\cdot \omega|^{p-2} \, d\omega=C_{n,p}|\nabla u(x)|^{p-2},
		}
		we obtain
		\[ \Big(|\nabla u(x)|^{p-2} +  \textcolor{black}{o_r(1)} \Big)\Big(u(x) -\M_r^p u(x) \Big) = -\tilde c_{n,p}r^2 \Delta_p u(x) +\textcolor{black}{ o(r^{2})},\] 
with
\[ \tilde c_{n,p}=\frac{\gamma_p (p-1)}{p C_{n,p}}=\frac{(p-1)(p-3)}{2p(p+n-2)}.\]
This concludes the proof of the proposition.
\end{proof}

 \begin{remark} \label{mprrmk} We compare our result in the local setting to the existing literature, pointing out that our expansion is obtained for $p\geq 2$. The formula \eqref{31mvploc} is indeed the same as Theorem 6.1 in the recent paper \cite{lindgt} (we point out that therein the case $p\in (1,2)$ is also studied). Furthermore, the expansion \eqref{31mvploc} has some similitudes to \cite[Theorem 1.1]{tizi} (where, instead, the so-called normalized $p$-Laplacian 
 \eqlab{ \label{renpl}
 		\Delta^{\mathcal N}_p u= \Delta u +(p-2) \Delta_\infty u
 }
 is used).  Indeed, the expansion therein obtained for $n=2$, which we re-write for our purposes (compare the normalized $p$-Laplacian with \eqref{locpl}), says that
 \bgs{
 	 \int_{B_r} |\nabla u(x)|^{p-2} \left( u(x)-u(x-y)\right) dy =&\; \int_{B_r} \left| \frac{ \nabla u(x)}{|\nabla u(x)|} 	\cdot y\right|^{p-2} dy\left( -c_{p} r^2 \Delta_p u(x)   + o(r^2) \right),
 	}
 	so, rescaling,
 	 \bgs{
 	 |\nabla u(x)|^{p-2}   \int_{B_1}\left( u(x)-u(x-ry)\right) dy  
 	 =&\;   
 	 	 -c_{p} r^p \Delta_p u(x)   + o(r^p) 
  	,
}
where the last line holds up to renaming the constant.
  On the other hand,  our expansion differs from the one given in \cite{MPR}, again given for the normalized $p$-Laplacian. Re-written for the $p$-Laplace as in \eqref{locpl},  the very nice formula in \cite{MPR} gives that
 \[ |\nabla u|^{p-2} \left( u(x) - \tilde {\mathcal M}_p u(x) \right) =-  \bar c_{p,n} r^2 \Delta_p u(x) +  o(r^2),\]
 with 
 \[ \tilde {\mathcal M}_p u(x) = \frac{2+n}{p+n} \dashint_{B_r(x)} u(y)\, dy +\frac{p-2}{2(p+n)}  \left(\max_{\overline {B_r}(x)} u(y) + \min_{\overline {B_r}(x)} u(y) \right)\]
 and
 \[ \bar c_{p,n}= \frac{1}{2(p+n)}.\]
The statement \eqref{3}, even though it appears weaker, still allows us to conclude that in the viscosity sense, at points $x\in \Rn$ for which the test functions $v(x)$ satisfy $\nabla v(x) \neq 0$, if $u$ satisfies the mean value property, then $\Delta_p u(x)=0$.
 \end{remark}
 \textcolor{black}{
 More precisely, we state the result for viscosity solutions (which follows from the asymptotic expansion for smooth functions). 
\begin{theorem}\label{theorem2345} Let $\Omega\subset \Rn$ be an open set and let $u\in C(\Omega)\cap L^{\infty}(\Rn)$. Then 
\[ 
	(-\Delta)_p u(x) =0 
	\]
in the viscosity sense if and only if 
	\bgs{ \label{peq123}
	 			 		\lim_{r \to 0^+} \big(u(x)- M^{p}_r u(x)\big)=
	 			 		o(r^p)
	 \qquad \mbox{ as } \quad r  \to 0^+
			 } 
	holds for all $x\in \Omega$ in the viscosity sense. 	
\end{theorem}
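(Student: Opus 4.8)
The plan is to deduce the statement from the asymptotic expansion for smooth functions of Proposition~\ref{mvploc}, in the very same way Theorem~\ref{theorem} was obtained from Theorem~\ref{asymp23}: the viscosity machinery is already in place and only needs to be transcribed to this gradient‑dependent setting, so I would keep the write‑up short and point back to that argument.

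First I would fix the meaning of ``the asymptotic holds in the viscosity sense'' for the kernel $\M_r^p$, following the template of Definition~\ref{meanvisc23}: for $x\in\Omega$, every neighborhood $U=U(x)\subset\Omega$ and every $\varphi\in C^2(\overline U)$ touching $u$ at $x$ as in \eqref{visc23}, one glues $v:=\varphi$ on $U$, $v:=u$ on $\Rn\setminus U$, and imposes the two one‑sided conditions on $\limsup_{r\to0^+}$ and $\liminf_{r\to0^+}$ of $\big(|\nabla\varphi(x)|^{p-2}+\mathcal{O}(r)\big)\big(v(x)-\M_r^p v(x)\big)$, i.e.\ precisely the left side of \eqref{3}, with the weight $|\nabla\varphi(x)|^{p-2}+\mathcal{O}(r)$ playing here the role $\D_r^{s,p}u(x)$ played in Definition~\ref{meanvisc23}. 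The one preliminary remark is that $\M_r^p v(x)$ sees $v$ only through its values on $\partial B_r(x)$, so that $\M_r^p v(x)=\M_r^p\varphi(x)$ as soon as $\overline{B_r(x)}\subset U$; hence the quantity to analyse is a functional of the smooth function $\varphi$ near $x$ alone, and Proposition~\ref{mvploc} applies.

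The core step is then to substitute $\varphi$ for $u$ in \eqref{3} (equivalently, in the surface form \eqref{31mvploc}):
\[
\big(|\nabla\varphi(x)|^{p-2}+\mathcal{O}(r)\big)\big(\varphi(x)-\M_r^p\varphi(x)\big)=-\,\tilde c_{n,p}\,r^{2}\,\Delta_p\varphi(x)+o(r^{2}),\qquad r\to0^+,
\]
with $\tilde c_{n,p}>0$. If $u$ is a viscosity subsolution of $(-\Delta)_p u=0$, the defining inequality for $v$ at $x$ is $\Delta_p\varphi(x)\ge0$, so the right side is $\le o(r^2)$, and dividing by the weight (legitimate and uniformly positive for small $r$ when $\nabla\varphi(x)\neq0$) gives $\limsup_{r\to0^+}r^{-2}\big(\varphi(x)-\M_r^p\varphi(x)\big)\le0$, i.e.\ the ``$\le$'' half of the viscosity mean value asymptotic; the supersolution case is symmetric. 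Conversely, if that asymptotic holds in the viscosity sense, the identity forces $-\tilde c_{n,p}\Delta_p\varphi(x)\le0$ for test functions lying above $u$ at $x$ and $\ge0$ for those lying below, which, as $\tilde c_{n,p}>0$, is exactly $(-\Delta)_p v(x)\le0$, resp.\ $\ge0$; quantifying over all admissible $\varphi$ and all $x\in\Omega$ closes the equivalence.

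I expect the only delicate point to be the degenerate locus $\nabla\varphi(x)=0$, where the weight $|\nabla\varphi(x)|^{p-2}+\mathcal{O}(r)$ is no longer bounded away from $0$ and \eqref{3} cannot be divided through. If the weight is kept inside the viscosity definition, as above, this is harmless: for $p>2$ one has $\Delta_p\varphi(x)=|\nabla\varphi(x)|^{p-2}\Delta^{\mathcal N}_p\varphi(x)=0$ identically (cf.\ \eqref{renpl}), so, using $\varphi(x)-\varphi(x\mp r\omega)=\mathcal{O}(r^{2})$, both members of the displayed identity are $o(r^2)$ and the one‑sided conditions hold trivially; for $p=2$ the statement collapses to the classical mean value characterisation of harmonic functions, with no restriction on $\nabla\varphi$ at all. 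If one instead insists on the ``un‑weighted'' reading $u(x)-\M_r^p u(x)=o(r^{2})$, then—exactly as for the local result of \cite{MPR} recalled in Remark~\ref{mprrmk}—the equivalence is to be read at those $x$ at which the admissible test functions satisfy $\nabla\varphi(x)\neq0$. Apart from this bookkeeping, the argument is a line‑by‑line transcription of the proof of Theorem~\ref{theorem} into the present setting.
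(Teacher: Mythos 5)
Your proposal is correct and follows exactly the route the paper intends: Theorem \ref{theorem2345} is presented there as an immediate consequence of the smooth expansion in Proposition \ref{mvploc}, transferred to the viscosity setting by the same gluing argument used to deduce Theorem \ref{theorem} from Theorem \ref{asymp23}, and your discussion of the degenerate locus $\nabla\varphi(x)=0$ supplies the only bookkeeping the paper leaves implicit. The one point worth flagging is the exponent: working from \eqref{3} you establish the equivalence with rate $o(r^{2})$ for the normalized difference $u(x)-\M_r^p u(x)$, whereas the $o(r^{p})$ appearing in the statement is the rate native to the unnormalized surface form \eqref{31mvploc}, whose weight scales like $r^{p-2}$ — so for $p>2$ the statement and your proof match only after this normalization is taken into account.
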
}

\section{Gradient dependent operators}
\label{graddepend}
\subsection{The ``nonlocal'' $p$-Laplacian} In this section, we are interested in a nonlocal version of the $p$-Laplace operator, that arises in tug-of war games, and that was introduced in \cite{graddep}. 

This operator is the nonlocal version of the $p$-Laplacian given in a non-divergence form, and deprived of the $|\nabla u|^{p-2}$ factor (namely, the normalized $p$-Laplacian defined in \eqref{renpl}).  
So, for $p\in (1,+\infty)$, the (normalized) $p$-Laplace operator when  $\nabla u\neq 0$ is defined as
  	\[ 
 \Delta^{\mathcal N}_p u:=		\Delta^{\mathcal N}_{p,\pm} u = \Delta u + (p-2) |\nabla u|^{-2}  \langle D^2u \nabla u, \nabla u\rangle 
		.\] 
		By convention, when $\nabla u=0$, as in \cite{graddep}, 
	\[ 
		\Delta^{\mathcal N}_{p,+} u:= \Delta u + (p-2) \sup_{\xi \in \Sf } \langle D^2u \, \xi, \xi\rangle 
		\]
and
\[ 
		\Delta^{\mathcal N}_{p,-} u := \Delta u + (p-2) \inf_{\xi \in \Sf } \langle D^2u\, \xi, \xi\rangle. 
		\]
 
		Let $s\in (1/2,1)$ and $p\in [2,+\infty)$.
		In the nonlocal setting we have the following definition given in \cite[Section 4]{graddep}.
		
		 When $\nabla u(x)= 0$  we define
		 	 \[
		 	( -\Delta)_{p,+}^s u(x) := \frac{1}{\alpha_p}
		 	\sup_{\xi \in \Sf} \int_{\Rn} \frac{2u(x)-u(x+y)-u(x-y)}{|y|^{n+2s}} \chi_{[c_p,1]}\left(\frac{y}{|y|}\cdot \xi\right) dy
		 \]
		 and
 \[
		 	( -\Delta)_{p,-}^s u(x) := \frac{1}{\alpha_p}
		 	\inf_{\xi \in \Sf} \int_{\Rn} \frac{2u(x)-u(x+y)-u(x-y)}{|y|^{n+2s}} \chi_{[c_p,1]}\left(\frac{y}{|y|}\cdot \xi\right) dy.
		 \]
		 When $\nabla u(x) \neq 0$  then
		 \[
		 	( -\Delta)_p^s u(x) :=( -\Delta)_{p,\pm}^s =  \frac{1}{\alpha_p} \int_{\Rn} \frac{2u(x)-u(x+y)-u(x-y)}{|y|^{n+2s}} \chi_{[c_p,1]}\left(\frac{y}{|y|}\cdot z(x) \right) dy
		 \]
		 with 
		 \[
		 	 z(x)= \frac{\nabla u(x)}{|\nabla u(x)|}.
		 \]		
		 Here, $c_p,\alpha_p$ are positive constants.

	We remark that the case $p\in (1,2)$ is defined with the kernel $\chi_{[0,c_p]}\left(\frac{y}{|y|}\cdot z(x)\right)$ for some $c_p>0$, and can be treated in the same way.

		In particular, for $p\in [2,+\infty)$ we consider
		\eqlab{\label{alphp}
			&\alpha_p := \frac12 \int_{\Sf} (\omega \cdot e_2)^2 \chi_{[c_p,1]}(\omega \cdot e_1) \, d \omega,\\
			&\beta_p := \frac12 \int_{\Sf} (\omega \cdot e_1)^2 \chi_{[c_p,1]}(\omega \cdot e_1) \, d \omega - \alpha_p,
			}
			and  
			\eqlab{\label{ccp}
			 c_p \quad \mbox{ such that } \quad \frac{\beta_p}{\alpha_p}=p-2
			.}
			With these constants,  if $u\in C^2(\Rn)\cap L^\infty(\Rn)$, then 
			\[ \lim_{s\to 1^-} (1-s)\Delta_p^s u (x)= \Delta^{\mathcal N}_p u(x),\]
			as proved in \cite[Subsections 4.2.1, 4.2.2]{graddep}.
		
		We define now a $(s,p)$-mean kernel for the nonlocal $p$-Laplacian.
		For any $r>0$ and $u \in L^\infty(\Rn)$, when $\nabla u(x)=0$, we define
		\bgs{
		M_r^{s,p,+} u(x): = \frac{C_{s,p,+} r^{2s}}{2}  \sup_{\xi \in \Sf} \int_{\C B_r} \frac{ u(x+y)+u(x-y)}{|y|^n(|y|^2-r^2)^s } \chi_{[c_p,1]}\left( \frac{y}{|y|}\cdot \xi \right) dy, 
		}
		and
		\bgs{
		M_r^{s,p,-} u(x): = \frac{C_{s,p,-} r^{2s}}{2}  \inf_{\xi \in \Sf} \int_{\C B_r} \frac{ u(x+y)+u(x-y)}{|y|^n(|y|^2-r^2)^s } \chi_{[c_p,1]}\left( \frac{y}{|y|}\cdot \xi \right) dy, 
		}
		with 
		\[ 
			C_{s,p,+} = c_{s} \gamma_{p,+} 
			\quad \mbox{ with }\quad 
			\gamma_{p,+}:=\left(\sup_{\xi\in \Sf} \int_{\Sf} \chi_{[c_p,1]}(\omega \cdot \xi) d\omega\right)^{-1}
		\]
		respectively
			\[ 
			C_{s,p,-} = c_{s} \gamma_{p,-} 
			\quad \mbox{ with }\quad 
			\gamma_{p,-}:=\left(\inf_{\xi\in \Sf} \int_{\Sf} \chi_{[c_p,1]}(\omega \cdot \xi) d\omega\right)^{-1}.
		\]
		When $\nabla u(x) \neq 0$, let
			\bgs{
	M_r^{s,p} u(x): = M_r^{s,p,\pm}u(x)= &\;\frac{C_{s,p} r^{2s}}{2}  \int_{\C B_r} \frac{ u(x+y)+u(x-y)}{|y|^n(|y|^2-r^2)^s } \chi_{[c_p,1]}\left( \frac{y}{|y|}\cdot z(x)\right) dy, 
	\\
	& z(x)= \frac{ \nabla u(x)}{|\nabla u(x)|},
	}
		where{\footnote{It holds that $c(s)= \frac{2\sin \pi s}{\pi}$. }}
		\bgs{
		C_{s,p}= c_{s} \gamma_p , 
				\quad \mbox{ with }\quad
			c_{s}:= \left(\int_1^{\infty} \frac{d \rho}{\rho(\rho^2-1)^s}\right)^{-1}, \quad \gamma_p:=\left(\int_{\Sf} \chi_{[c_p,1]}(\omega \cdot e_1) d\omega\right)^{-1}.
		}

We have the next asymptotic expansion for smooth functions. 
\begin{theorem}\label{mrspw}
Let $\eta>0, x\in\Rn$ and let $u\in C^{2}(B_\eta(x)) \cap L^\infty(\Rn)$. Then 	
\textcolor{black}{	\bgs{ u(x)= M_r^{s,p,\pm} u(x) +
		  c(n,s,p) r^{2s} (-\Delta)_{p,\pm}^s u(x)	  +\mathcal O(r^{2})
	,} }
	as $r  \to 0^+$.	 
\end{theorem}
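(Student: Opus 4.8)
The plan is to follow the strategy of Theorem~\ref{asymp23}. Set $g(y):=2u(x)-u(x+y)-u(x-y)$ and, for $\xi\in\Sf$, abbreviate $\chi_\xi(y):=\chi_{[c_p,1]}\big(\tfrac{y}{|y|}\cdot\xi\big)$. Since the spherical cap measure $\int_{\Sf}\chi_{[c_p,1]}(\omega\cdot\xi)\,d\omega$ is independent of $\xi$ (so that $\gamma_{p,+}=\gamma_{p,-}=\gamma_p$ and $C_{s,p,+}=C_{s,p,-}=C_{s,p}$), the very choice of $c_s$ and $\gamma_p$ gives, for every $\xi$, $\tfrac{C_{s,p}r^{2s}}{2}\int_{\C B_r}\tfrac{2\,\chi_\xi(y)}{|y|^n(|y|^2-r^2)^s}\,dy=1$: in polar coordinates the radial factor is $\int_1^\infty\tfrac{d\tau}{\tau(\tau^2-1)^s}\,r^{-2s}=c_s^{-1}r^{-2s}$ and the spherical factor is $\gamma_p^{-1}$. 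Hence, when $\nabla u(x)\neq0$ (so that $M_r^{s,p,\pm}u(x)=M_r^{s,p}u(x)$ is a plain integral in the fixed direction $z(x)=\nabla u(x)/|\nabla u(x)|$),
\[
u(x)-M_r^{s,p}u(x)=\frac{C_{s,p}r^{2s}}{2}\int_{\C B_r}\frac{g(y)\,\chi_{z(x)}(y)}{|y|^n(|y|^2-r^2)^s}\,dy ,
\]
while when $\nabla u(x)=0$, since $u(x)$ is independent of $\xi$ we may use $a-\sup_\xi F(\xi)=\inf_\xi\big(a-F(\xi)\big)$ (and its analogue with $\sup$ and $\inf$ exchanged) to write
\[
u(x)-M_r^{s,p,+}u(x)=\frac{C_{s,p}r^{2s}}{2}\inf_{\xi\in\Sf}\int_{\C B_r}\frac{g(y)\,\chi_\xi(y)}{|y|^n(|y|^2-r^2)^s}\,dy ,
\]
and similarly for $M_r^{s,p,-}u(x)$ with $\inf$ replaced by $\sup$.

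Next I would compare the weight with the kernel of $(-\Delta)_{p,\pm}^s$ by splitting, for each fixed $\xi$,
\[
\int_{\C B_r}\frac{g\,\chi_\xi}{|y|^n(|y|^2-r^2)^s}\,dy=\int_{\Rn}\frac{g\,\chi_\xi}{|y|^{n+2s}}\,dy-\int_{B_r}\frac{g\,\chi_\xi}{|y|^{n+2s}}\,dy+\int_{\C B_r}g\,\chi_\xi\,\frac1{|y|^n}\Big(\frac1{(|y|^2-r^2)^s}-\frac1{|y|^{2s}}\Big)\,dy .
\]
The first term is $\alpha_p(-\Delta)_{p}^s u(x)$ in the fixed-direction case, and $\alpha_p(-\Delta)_{p,\pm}^s u(x)$ after taking $\inf_\xi$ or $\sup_\xi$ in the degenerate case, the $\chi$ being the same one appearing in the definition of the operator. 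The last two integrals are remainders: using the Taylor bound $|g(y)|\le C|y|^2$ valid on $B_\eta(x)$ (hence for all $y$ in play once $2r<\eta$), together with $|g(y)|\le 4\|u\|_{L^\infty(\Rn)}$ for $|y|>\eta$ and the elementary integral asymptotics of Appendix~\ref{appendicite}, one gets $\int_{B_r}\tfrac{|g|}{|y|^{n+2s}}\,dy=\mathcal O(r^{2-2s})$, and — splitting the last integral over $r<|y|<2r$, over $2r<|y|<\eta$ and over $|y|>\eta$, and using $\tfrac1{(|y|^2-r^2)^s}-\tfrac1{|y|^{2s}}\le C_s\,r^2|y|^{-2s-2}$ for $|y|>2r$ — that it too is $\mathcal O(r^{2-2s})+\mathcal O(r^2)$, all bounds being uniform in $\xi$ since only $0\le\chi_\xi\le1$ is used. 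Collecting terms and multiplying by $\tfrac{C_{s,p}r^{2s}}{2}$, every remainder becomes $\mathcal O(r^2)$, which yields the stated expansion with $c(n,s,p)=\tfrac{C_{s,p}\alpha_p}{2}$.

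I expect the main obstacle to be the control of the annulus $r<|y|<2r$ adjacent to $\partial B_r$, where the weight $(|y|^2-r^2)^{-s}$ is singular: the point is that there $|g(y)|\le C|y|^2\le Cr^2$ (which forces $r$ to be small enough that the annulus lies inside $B_\eta(x)$), whereas $\int_{r<|y|<2r}\tfrac{dy}{|y|^n(|y|^2-r^2)^s}=\omega_n\, r^{-2s}\int_1^2\tfrac{d\tau}{\tau(\tau^2-1)^s}=\mathcal O(r^{-2s})$, the radial integral being finite precisely because $s<1$; this is exactly what produces the $\mathcal O(r^2)$ error once the $r^{2s}$ prefactor is restored. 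Beyond the proof of Theorem~\ref{asymp23}, the only additional ingredients are the harmless cutoff $\chi_\xi$ and, in the case $\nabla u(x)=0$, making all of the above estimates uniform in $\xi$ and keeping track of the $\sup$/$\inf$ (via $a-\sup_\xi=\inf_\xi(a-\cdot)$) so that the mean kernel $M_r^{s,p,\pm}$ is matched with the corresponding one among $(-\Delta)_{p,\pm}^s u(x)$.
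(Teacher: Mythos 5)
Your argument is correct and is essentially the paper's own proof: the same normalization identity for the kernel, the same splitting into the $|y|^{-n-2s}$ part (yielding $\alpha_p(-\Delta)^s_{p,\pm}u(x)$) plus a $B_r$ correction and a kernel-difference correction over $\C B_r$, each of size $\mathcal O(r^{2-2s})$ before the $r^{2s}$ prefactor is restored. The only differences are cosmetic — you estimate the two remainders separately via the pointwise bound on $(|y|^2-r^2)^{-s}-|y|^{-2s}$ where the paper rescales and combines them using the appendix asymptotics, and you spell out the degenerate case $\nabla u(x)=0$ via the $\sup$/$\inf$ exchange (correctly noting one must track which of $M_r^{s,p,\pm}$ pairs with which of $(-\Delta)^s_{p,\pm}$), whereas the paper simply asserts that case ``goes the same''.
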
			

\begin{proof}
We prove the result for $\nabla u(x) \neq 0$ (the proof goes the same for $\nabla u(x) =0$).
\\
We fix some $\bar \eps>0$, and there exists $ 0<r= r(\bar \eps)\in(0,\eta/2)$ such that \eqref{bareps} is satisfied. 
Passing to spherical coordinates we have  that
 \bgs{
 		&\; C_{s,p} r^{2s} \int_{\C B_r} \frac{dy}{|y|^n(|y|^2-r^2)^s} \chi_{[c_p,1]}\left( \frac{y}{|y|} \cdot z(x) \right)=
 		C_{s,p}\int_1^{\infty} \frac{ d\rho}{\rho(\rho^2-1)^s} \int_{\Sf} \chi_{[c_p,1]}\left( \omega \cdot z(x) \right)d \omega 		\\
 		=&\; C_{s,p} \int_1^{\infty} \frac{ dy}{\rho(\rho^2-1)^s} \int_{\Sf} \chi_{[c_p,1]}\left( \omega \cdot e_1 \right) d \omega=
 		1,
 	}
 	where the last line follows after a rotation (one takes $U\in \M^{n\times n}(\R)$ an orthogonal matrix such that $U^{-1}(x) z(x)= e_1$ and changes variables).

 	It follows that for any $r>0$,
 	\[ u(x) - M_r^{s,p} u(x) = \frac{C_{s,p} r^{2s}}2 \int_{\C B_r} \frac{ 2u(x)-u(x+y)-u(x-y) }{|y|^n(|y|^2-r^2)^s } \chi_{[c_p,1]}\left( \frac{y}{|y|}\cdot z(x)\right)dy. \] 
 	Therefore we have that
 	\bgs{
 	  & u(x) - M_r^{s,p} u(x) =\;  \frac{C_{s,p} \alpha_p r^{2s}}2 (-\Delta)_p^s u(x) 
 	 \\
 	 &\; - \frac{C_{s,p} r^{2s}}2 \int_{B_r}  \frac{ 2u(x)-u(x+y)-u(x-y) }{|y|^{n+2s} } \chi_{[c_p,1]}\left( \frac{y}{|y|}\cdot z(x)\right)dy
 	 \\
 	 &\; + \frac{C_{s,p} r^{2s}}2 \int_{\C B_r} \frac{ 2u(x)-u(x+y)-u(x-y) }{|y|^{n+2s} } \left( \frac{|y|^{2s}}{(|y|^2-r^2)^s}-1 \right)  \chi_{[c_p,1]}\left( \frac{y}{|y|}\cdot z(x)\right)dy
 	 \\
 	 = :&\; 	\frac{C_{s,p} \alpha_p r^{2s}}2 (-\Delta)_p^s u(x) 
 	 -I_r+J_r
 	 }
 	and
	\bgs{
	J_r= &\; \frac{C_{s,p} }2 \int_{\C B_1}\frac{ 2u(x)-u(x+ry)-u(x-ry) }{|y|^{n+2s} } \left( \frac{|y|^{2s}}{(|y|^2-1)^s}-1 \right)  \chi_{[c_p,1]}\left( \frac{y}{|y|}\cdot z(x)\right)dy
	\\
	= &\; \frac{C_{s,p} }2 \int_{B_\frac{\eta}r\setminus B_1} \frac{ 2u(x)-u(x+ry)-u(x-ry) }{|y|^{n+2s} } \left( \frac{|y|^{2s}}{(|y|^2-1)^s}-1 \right)  \chi_{[c_p,1]}\left( \frac{y}{|y|}\cdot z(x)\right)dy
	\\
	&\; + \frac{C_{s,p} }2 \int_{\C B_\frac{\eta}r} \frac{ 2u(x)-u(x+ry)-u(x-ry) }{|y|^{n+2s} } \left( \frac{|y|^{2s}}{(|y|^2-1)^s}-1 \right)  \chi_{[c_p,1]}\left( \frac{y}{|y|}\cdot z(x)\right)dy
	\\
	= &\; J^1_r+J^2_r.
	}
	We obtain that
	\bgs{
		|J^2_r| \leq &\; 4\|u\|_{L^\infty(\Rn)}  \frac{C_{n,s,p} }2 \int_{\frac{\eta}r}^\infty \frac{d\rho }{\rho^{1+2s} } \left( \frac{\rho^{2s}}{(\rho^2-1)^s}-1 \right) \int_{\Sf} \chi_{[c_p,1]}\left( \omega\cdot z(x)\right)d\omega
		\\
		\leq &\; C_{s,p}  \int_{\frac{\eta}r}^\infty \frac{d\rho }{\rho^{1+2s} } \left( \frac{\rho^{2s}}{(\rho^2-1)^s}-1 \right) ,
		}
		and using \eqref{trois}, that 
		\[ J^2_r= \mathcal O (r^{2+2s}).\]

 	We have that
 	 \bgs{
 	 	J_r^1-I_r = &\; \frac{C_{s,p}}2  
 	 	\bigg[ \int_{B_{\frac{\eta}r} \setminus B_1 }  \frac{ 2u(x)-u(x+ry)-u(x-ry) }{|y|^n (|y|^2-1)^s} \chi_{[c_p,1]}\left( \frac{y}{|y|}\cdot z(x)\right) dy
 	 	\\
 	 	&\; - \int_{B_{\frac{\eta}r} }  \frac{ 2u(x)-u(x+ry)-u(x-ry) }{|y|^{n+2s} } \chi_{[c_p,1]}\left( \frac{y}{|y|}\cdot z(x)\right) dy \bigg]		 	
 	 	}
 which, by \eqref{11} and  \eqref{eun}, gives 
		\[ J_r^1 -I_r=\mathcal O(r^{2}).\]
		It follows that 
		\[u(x) - M_r^{s,p} u(x) =\frac{C_{s,p} \alpha_p }2\: r^{2s} (-\Delta)_p^s u(x) 
		+\mathcal O(r^{2})
		\]
		for $r  \to 0^+$, hence the conclusion.
		\end{proof}	
				
We recall  the viscosity setting introduced in \cite{graddep}.
\begin{definition} A function $u\in L^\infty(\Rn)$, upper (lower) semi-continuous in $\overline \Omega$ is a viscosity subsolution  (supersolution) in $\Omega$ of 
	\[
		(-\Delta)_{p,\pm}^s u=0, \qquad \mbox{ and we write } \quad (-\Delta)_{p,\pm}^s u \leq \,(\geq) \,  0
	\]
	if for every $x\in\Omega$, any neighborhood $U=U(x)\subset \Omega$ and any $\varphi \in C^2(\overline U)$ such that 
	\eqref{visc23}  holds
	if we let $v$ as in \eqref{vvv234}
				 \bgs{
				 	(-\Delta)_{p,\pm}^s v (x) \leq \,(\geq)\,0.
				 }			
	A viscosity solution of $(-\Delta)_{p,\pm}^s u =0$ is a (continuous) function that is both a subsolution and a supersolution. 
	\end{definition}	
	Furthermore, we define an asymptotic expansion in the viscosity sense. 
	\begin{definition} Let $u \in L^\infty(\Rn)$ upper (lower) semi-continuous in $\Omega$. We say that 
\bgs{
\label{fggdw}
	\lim_{r \to 0^+}  \left(u(x)- M^{s,p}_r u(x)\right)  = o(r^{2s})  
	}
	holds in the viscosity sense
if for any 
	neighborhood $U=U(x)\subset \Omega$ and any $\varphi \in C^2(\overline U)$ such that \eqref{visc23} holds, and
if we let $v$ be defined as in \eqref{vvv234}, then both
\bgs{
	\liminf_{r \to 0^+}  \frac{ u(x)- M^{s,p}_r u(x)}{r^{2s}}   \geq 0 
	}
	and
	\bgs{
	\limsup_{r \to 0^+}   \frac{ u(x)- M^{s,p}_r u(x)}{r^{2s}}    \leq 0 
	}
	hold point wisely.
\end{definition}	

The result for viscosity solutions, which is a direct  consequence of Theorem \ref{mrspw} applied to the test function $v$, goes as follows.
\begin{theorem}\label{theorem2} Let $\Omega\subset \Rn$ be an open set and let $u\in C(\Omega)\cap L^{\infty}(\Rn)$. Then 
\[ 
	(-\Delta)_{p,\pm}^s u(x) =0 
	\]
in the viscosity sense if and only if 
	\bgs{
	\lim_{r \to 0^+}  \left(u(x)- M^{s,p,\pm}_r u(x)\right)  = o(r^{2s})  
	}
	holds for all $x\in \Omega$ in the viscosity sense. 	
\end{theorem}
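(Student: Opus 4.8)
The plan is to derive Theorem~\ref{theorem2} from the pointwise asymptotic expansion of Theorem~\ref{mrspw} for smooth functions, by the same viscosity transfer argument through which Theorem~\ref{theorem} is deduced from Theorem~\ref{asymp23}. First I would fix $x\in\Omega$, a neighborhood $U=U(x)\subset\Omega$, and a test function $\varphi\in C^2(\overline U)$ satisfying \eqref{visc23}, and let $v$ be the glued function of \eqref{vvv234}. Since $v\equiv\varphi$ on the open set $U$, we have $v\in C^2(B_\eta(x))\cap L^\infty(\Rn)$ for some $\eta>0$, so Theorem~\ref{mrspw} applies to $v$ at $x$; dividing the resulting identity by $r^{2s}$ and using $v(x)=u(x)$ (from \eqref{visc23}) gives
\[
\frac{u(x)-M^{s,p,\pm}_r v(x)}{r^{2s}}=c(n,s,p)\,(-\Delta)^s_{p,\pm}v(x)+\mathcal O(r^{2-2s}),\qquad r\to 0^+,
\]
where $c(n,s,p)=C_{s,p}\alpha_p/2>0$ as in the proof of Theorem~\ref{mrspw}. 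Because $s<1$ we have $2-2s>0$, so the remainder vanishes as $r\to 0^+$, and both the $\liminf$ and the $\limsup$ of the left-hand quotient equal $c(n,s,p)(-\Delta)^s_{p,\pm}v(x)$.

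Then I would read this identity in both directions, matching signs. Assume first that $(-\Delta)^s_{p,\pm}u=0$ in the viscosity sense: if $\varphi$ touches $u$ from above at $x$ (so $\varphi>u$ on $U\setminus\{x\}$), then $(-\Delta)^s_{p,\pm}v(x)\le 0$ and hence $\limsup_{r\to 0^+}\bigl(u(x)-M^{s,p,\pm}_r v(x)\bigr)/r^{2s}\le 0$; symmetrically, if $\varphi$ touches from below, $(-\Delta)^s_{p,\pm}v(x)\ge 0$ and the $\liminf$ is $\ge 0$. This is exactly the statement that the asymptotic expansion holds in the viscosity sense. Conversely, assume the expansion holds in the viscosity sense: for $\varphi$ touching $u$ from above, the required $\limsup\le 0$ forces $c(n,s,p)(-\Delta)^s_{p,\pm}v(x)\le 0$, hence $(-\Delta)^s_{p,\pm}v(x)\le 0$ since $c(n,s,p)>0$, so $u$ is a viscosity subsolution; using test functions from below together with the $\liminf\ge 0$ condition, $u$ is a viscosity supersolution, hence a viscosity solution. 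The case $\nabla v(x)=0$ would be handled identically, since Theorem~\ref{mrspw} covers it, with the sup/inf versions of $M^{s,p,\pm}_r$ and of $(-\Delta)^s_{p,\pm}$ used consistently.

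I do not expect a genuine obstacle once Theorem~\ref{mrspw} is in hand; the only points deserving care are: (i) checking that the glued function $v$ is really $C^2$ in a neighborhood of $x$ — which is automatic because $v\equiv\varphi$ on $U$ — so that Theorem~\ref{mrspw} applies with $v$ in place of $u$; (ii) observing that the $\mathcal O(r^2)$ remainder in Theorem~\ref{mrspw} is of strictly higher order than the natural scale $r^{2s}$, which is where $s<1$ enters (the standing restriction $s>1/2$ being what makes the operator and Theorem~\ref{mrspw} meaningful in the first place); and (iii) the bookkeeping of the two one-sided viscosity inequalities, combined with the positivity of $c(n,s,p)$. One could also remark that along such test functions the $\liminf$ and $\limsup$ are actually genuine limits, but this refinement is not needed for the statement.
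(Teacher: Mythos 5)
Your argument is exactly the paper's: the authors state that Theorem~\ref{theorem2} is a direct consequence of Theorem~\ref{mrspw} applied to the glued test function $v$, and your proposal simply fills in the routine details (division by $r^{2s}$, vanishing of the $\mathcal O(r^{2-2s})$ remainder since $s<1$, positivity of the constant, and the sign bookkeeping in both directions). The proof is correct and matches the intended route.
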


We study also the limit case as $s\to 1^-$ of this version of the $(s,p)$-mean kernel. We state the result only in the case $\nabla u(x)\neq 0$, remarking that an analogue results holds for $M_r^{s,p,\pm}$ with the suitable $M_r^{p,\pm}$. 
\begin{proposition} Let $\Omega\subset \Rn$ be an open set and  $u\in C^{1}(\Omega) \cap L^\infty(\Rn)$. For any $r>0$ small 
denoting
\[ M_r^p u(x):= \frac{\textcolor{black}{\gamma_p}}2 \int_{\partial B_r} \left(u(x+y)-u(x-y)\right) 
\chi_{[c_p,1]} \left( \frac{y}{|y|} \cdot z(x) \right) \, dy \]
it holds that
\eqlab{\label{2w} \lim_{s\to 1^-} M_r^{s,p} u(x) = M_r^p u(x),}
	for every $x\in \Omega, r>0$ such that $B_{2r}(x)\subset \Omega$.
\end{proposition}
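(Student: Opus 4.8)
The plan is to pass $M^{s,p}_r u(x)$ to polar coordinates, turning it into an average of $u$ against the product of a \emph{radial} probability measure on $(1,\infty)$ and the angular weight $\gamma_p\chi_{[c_p,1]}(\omega\cdot z(x))\,d\omega$ on $\Sf$ (here $z(x)=\nabla u(x)/|\nabla u(x)|$ is a fixed unit vector, since $\nabla u(x)\neq0$), and then to show that as $s\to1^-$ the radial measure concentrates at $\rho=1$, so that the radial integration collapses to an evaluation on the unit sphere, which is precisely $M^p_r u(x)$. Writing $y=r\rho\omega$ with $\rho>1$, $\omega\in\Sf$, and recalling $C_{s,p}=c_s\gamma_p$, one gets
\[
M^{s,p}_r u(x)=\frac{\gamma_p}{2}\int_{\Sf}\chi_{[c_p,1]}\!\big(\omega\cdot z(x)\big)\left(\int_1^\infty c_s\,\frac{u(x+r\rho\omega)+u(x-r\rho\omega)}{\rho(\rho^2-1)^s}\,d\rho\right)d\omega,
\]
where, by the very definition of $c_s$, the radial weight $c_s\,\rho^{-1}(\rho^2-1)^{-s}\,d\rho$ is a probability measure on $(1,\infty)$ (it is the radial factor of the normalization identity already used in the proof of Theorem~\ref{mrspw}).

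The heart of the matter is the scalar limit $\int_1^\infty c_s\,h_\omega(\rho)\,\rho^{-1}(\rho^2-1)^{-s}\,d\rho\to h_\omega(1)$ as $s\to1^-$, uniformly in $\omega$, where $h_\omega(\rho):=u(x+r\rho\omega)+u(x-r\rho\omega)$ is continuous and bounded on $[1,\infty)$. Using the unit total mass one writes the difference as $\int_1^\infty c_s\,(h_\omega(\rho)-h_\omega(1))\,\rho^{-1}(\rho^2-1)^{-s}\,d\rho$ and splits at $\rho=1+\delta$ with $\delta<1$: on $(1,1+\delta)$ the factor $|h_\omega(\rho)-h_\omega(1)|$ is controlled by the modulus of continuity of $u$ on the compact set $\overline{B_{2r}(x)}\subset\Omega$ (this is where the hypothesis $B_{2r}(x)\subset\Omega$ enters), while the weight integrates to at most $1$; on $(1+\delta,\infty)$ one uses $\|h_\omega\|_{L^\infty}\le2\|u\|_{L^\infty(\Rn)}$ together with the boundedness of $\int_{1+\delta}^\infty\rho^{-1}(\rho^2-1)^{-s}\,d\rho$ uniformly in $s\in(1/2,1)$ and the fact that $c_s\to0$. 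This last fact is the only genuinely computational ingredient: from $\rho^2-1\le3(\rho-1)$ on $[1,2]$ one gets $\int_1^\infty\rho^{-1}(\rho^2-1)^{-s}\,d\rho\ge c\,(1-s)^{-1}\to\infty$, hence $c_s=\mathcal{O}(1-s)$, which is exactly the mechanism exploited in Lemma~\ref{ghirs}.

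Finally I would pass the limit under the angular integral (justified either by the uniformity in $\omega$ just obtained or by dominated convergence against the constant $2\|u\|_{L^\infty(\Rn)}$), which gives
\[
\lim_{s\to1^-}M^{s,p}_r u(x)=\frac{\gamma_p}{2}\int_{\Sf}\big(u(x+r\omega)+u(x-r\omega)\big)\,\chi_{[c_p,1]}\!\big(\omega\cdot z(x)\big)\,d\omega=M^p_r u(x),
\]
that is, \eqref{2w}. I expect the main obstacle to be precisely the tension between the vanishing normalization $c_s$ and the singularity $(\rho^2-1)^{-s}$ at $\rho=1$, which becomes non-integrable in the limit: one must quantify $c_s\sim(1-s)$ and make the concentration estimate uniform in $\omega\in\Sf$. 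The polar change of variables and the rotation bringing $z(x)$ to $e_1$ in the normalization are routine and already recorded in the proof of Theorem~\ref{mrspw}.
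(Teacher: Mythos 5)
Your proof is correct, and the key step is handled by a genuinely different mechanism than the one in the paper. Both arguments pass to polar coordinates, use the rotation sending $z(x)$ to $e_1$, and split the radial integral at $\rho=1+\delta$; and in both, the far part dies because $c_s\to 0$ while $\int_{1+\delta}^\infty \rho^{-1}(\rho^2-1)^{-s}\,d\rho$ stays bounded. The difference is in the near part: the paper integrates by parts in $\rho$, which produces the boundary term $\frac{\eps^{1-s}}{1-s}\,h_\omega(1+\eps)\,[(1+\eps)(2+\eps)^s]^{-1}$ plus a remainder $I^o_\eps$ whose control requires differentiating $u(x\pm r\rho\omega)$ in $\rho$ (hence the $C^1$ hypothesis), and then invokes the precise asymptotics $c_s\sim 2(1-s)$ to evaluate the boundary term. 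You instead exploit that $c_s\,\rho^{-1}(\rho^2-1)^{-s}\,d\rho$ has unit mass by the very definition of $c_s$, subtract $h_\omega(1)$, and run an approximate-identity/concentration argument: the near part is controlled by the modulus of continuity of $u$ on a compact subset of $B_{2r}(x)$, and you only need the qualitative fact $c_s=\mathcal O(1-s)$ (which you correctly extract from $\rho^2-1\le 3(\rho-1)$ on $[1,2]$), not the exact constant. This buys you two things: you never differentiate $u$, so the analytic core of your argument works for $u$ merely continuous near $x$ (the $C^1$ assumption is then only needed to make sense of $z(x)$ in the definition of the kernel), and you avoid the sharp evaluation of $\lim_{s\to1^-}c_s/(1-s)$. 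The paper's integration-by-parts route, on the other hand, is the template it reuses verbatim in Lemma \ref{ghirs} and in the infinity-Laplacian case, where the extra factor $|u(x)-u(x-\rho\omega)|^{p-2}$ prevents the weight from being a fixed probability measure. One cosmetic point: state the uniform continuity on $\overline{B_{(1+\delta)r}(x)}$ (compactly contained in $B_{2r}(x)\subset\Omega$ for $\delta<1$) rather than on $\overline{B_{2r}(x)}$, which the hypothesis does not quite guarantee lies in $\Omega$.
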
 
\begin{proof}
We have that
	\bgs{
		M_r^{s,p} u(x) = \frac{C_{s,p}}2 \int_{\C B_1} \frac{ u(x+ry) + u(x-ry)}{|y|^n (|y|^2-1)^s} \chi_{[c_p,1]}\left( \frac{y}{|y|}\cdot z(x)\right) dy .
	}
	Let $\eps>0 $ be fixed (to be taken arbitrarily small). Then
	\[
		|J_\eps(x)|:= \left| \int_{B_{1+\eps}}\frac{ u(x+ry) + u(x-ry)}{|y|^n (|y|^2-1)^s} \chi_{[c_p,1]}\left( \frac{y}{|y|}\cdot z(x)\right) dy \right| \leq \frac{ 2\|u\|_{L^\infty(\Rn)}}{\gamma_p} \int_{1+\eps}^\infty \frac{dt}{t(t^2-1)^s},    
	\] 
	which from Proposition \ref{useful} gives that
	\[ \lim_{s\to 1^-}C_{s,p} J_\eps(x) =0.\] 
	On the other hand, we have that
	\bgs{
	I_\eps(x) =&\; \int_{B_{1+\eps}\setminus B_1} \frac{ u(x+ry) + u(x-ry)}{|y|^n (|y|^2-1)^s} \chi_{[c_p,1]}\left( \frac{y}{|y|}\cdot z(x)\right) dy 
	\\
	=&\; \int_{\Sf} \left( \int_1^{1+\eps} \frac{ u(x+r\rho \omega ) + u(x-r\rho \omega)}{\rho (\rho^2-1)^s}\, d\rho \right) \chi_{[c_p,1]}\left( \omega\cdot z(x)\right) d\omega 
	}
	and integrating by parts, that
	\bgs{
		\int_1^{1+\eps} \frac{ u(x+r\rho \omega ) + u(x-r\rho \omega)}{\rho (\rho^2-1)^s}\, d\rho 
		= &\;  \frac{\eps^{1-s}}{1-s} \frac{ u(x+r(1+\eps) \omega ) + u(x-r(1+\eps) \omega)}{(1+\eps) (2+\eps)^s} 
		 - I^o_\eps(x)
		}
		with
		\bgs{ 
		I^o_\eps(x):= \int_1^{1+\eps} \frac{(\rho-1)^{1-s}}{1-s}  \frac{d}{d\rho} \left(\frac{ u(x+r\rho \omega ) + u(x-r\rho \omega)}{\rho (\rho+1)^s}\right) d\rho.		
	}
	We notice that
	\[
		 \left| I^o_\eps(x)\right| \leq C \frac{\eps^{2-s}}{1-s} ,
	\]
	hence we get 
	\[ \lim_{s\to 1^-} C_{s,p} I_\eps^o(x) = \mathcal O(\eps).\] 
	Therefore we obtain
	\bgs{
			 \lim_{s \to 1^-} M_r^{s,p} u(x) = \frac{\textcolor{black}{\gamma_p}}{(1+\eps)(2+\eps)}  \int_{\Sf}  \left( u(x+r\omega) +u(x-r\omega)\right)\chi_{[c_p,1]}\left( \omega \cdot z(x)\right) dy + \mathcal O(\eps),
		}
		and \eqref{2w} follows by sending $\eps \to 0$.
\end{proof}
\textcolor{black}{We obtain furthermore an expansion for the normalized $p$-Laplacian, as follows.
\begin{proposition}
 \label{mvploc122}
If $u\in C^2(\Omega)$, then
	\[
		u(x) -M_r^pu(x)=- C_p r^2 \Delta_p^{\mathcal N} u(x) + o(r^2).
	\]
\end{proposition}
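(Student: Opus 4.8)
The plan is to combine the two asymptotic results already proved in this section: Theorem \ref{mrspw}, which gives the expansion of $M_r^{s,p,\pm}u(x)$ in terms of the nonlocal operator $(-\Delta)_{p,\pm}^s u(x)$ with a remainder of order $\mathcal O(r^2)$, and the limiting behaviours as $s\to 1^-$, namely Theorem \ref{limits1}-type statements (more precisely the computation in \cite[Subsections 4.2.1, 4.2.2]{graddep} giving $\lim_{s\to 1^-}(1-s)\Delta_p^s u(x)=\Delta_p^{\mathcal N}u(x)$) together with the proposition just proved, $\lim_{s\to 1^-} M_r^{s,p}u(x)=M_r^p u(x)$. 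The overall idea is that the desired expansion for the normalized $p$-Laplacian is the ``$s\to 1^-$ shadow'' of the nonlocal expansion in Theorem \ref{mrspw}, but to make this rigorous one cannot merely pass to the limit in $s$: one must redo the Taylor expansion directly at $s=1$, i.e.\ repeat the proof of Theorem \ref{mrspw} with the kernel $\chi_{[c_p,1]}$ on the sphere $\partial B_r$ rather than on $\C B_r$.

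First I would write, for $u\in C^2(\Omega)$ and $\nabla u(x)\neq 0$ (the case $\nabla u(x)=0$ being handled with $\sup/\inf$ over $\xi\in\Sf$ exactly as in Theorem \ref{mrspw}), the identity
\[
u(x)-M_r^p u(x)=\frac{\gamma_p}{2}\int_{\partial B_r}\bigl(2u(x)-u(x+y)-u(x-y)\bigr)\chi_{[c_p,1]}\Bigl(\tfrac{y}{|y|}\cdot z(x)\Bigr)\,d\mathcal H^{n-1}(y),
\]
using that $\gamma_p\int_{\Sf}\chi_{[c_p,1]}(\omega\cdot z(x))\,d\omega=1$ by the very definition of $\gamma_p$ and a rotation. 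Then I would insert the second-order Taylor expansion
$2u(x)-u(x+y)-u(x-y)=-\langle D^2u(x)y,y\rangle+\mathcal O(|y|^3)$, which is exactly \eqref{11} (with $\bar\eps$-control replaced by a Peano remainder, legitimate since $u\in C^2$), pass to hyperspherical coordinates, and arrive at
\[
u(x)-M_r^p u(x)=-\frac{\gamma_p}{2}\,r^2\int_{\Sf}\langle D^2u(x)\omega,\omega\rangle\,\chi_{[c_p,1]}(\omega\cdot z(x))\,d\omega+o(r^2).
\]
The remaining step is to evaluate the angular integral. Using the orthogonal change of variables $\omega'=U(x)\omega$ with $U(x)e_1=z(x)$ (as in the proof of Theorem \ref{limits1}), the integral becomes $\int_{\Sf}\langle B(x)\omega,\omega\rangle\chi_{[c_p,1]}(\omega_1)\,d\omega$ with $B(x)=U(x)^{-1}D^2u(x)U(x)$; by symmetry only diagonal terms survive, and by the definitions \eqref{alphp} of $\alpha_p,\beta_p$ this equals $2\alpha_p\,\mathrm{Tr}\,B(x)+2\beta_p\,b_{11}(x)=2\alpha_p\bigl(\Delta u(x)+(p-2)\Delta_\infty u(x)\bigr)=2\alpha_p\,\Delta_p^{\mathcal N}u(x)$, where I used \eqref{ccp} ($\beta_p/\alpha_p=p-2$) and the identities $\mathrm{Tr}\,B(x)=\Delta u(x)$, $b_{11}(x)=\langle D^2u(x)z(x),z(x)\rangle=\Delta_\infty u(x)$. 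This yields the claim with $C_p=\gamma_p\alpha_p$.

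The main obstacle — really the only nontrivial point — is the bookkeeping of the angular integral and the matching of constants: one must be careful that the $\chi_{[c_p,1]}$ cutoff is around the direction $e_1$ (not $e_2$), so that the ``longitudinal'' variance produces $\alpha_p+\beta_p$ and the ``transverse'' ones produce $\alpha_p$, exactly reproducing the $(p-2)$ coefficient of $\Delta_\infty$ via \eqref{ccp}; this is the same normalization underlying $\lim_{s\to1^-}(1-s)\Delta_p^s u=\Delta_p^{\mathcal N}u$ from \cite{graddep}, so consistency is expected, but it has to be checked cleanly. The control of the cubic remainder $\mathcal O(|y|^3)$ on $\partial B_r$ integrates to $\mathcal O(r^3)=o(r^2)$ and is routine given $u\in C^2$; the $\nabla u(x)=0$ case follows verbatim by replacing the fixed direction $z(x)$ with the optimizing $\xi\in\Sf$ and using that the $\sup$ (resp.\ $\inf$) over $\xi$ of the leading quadratic form is attained, so that the same expansion holds with $\Delta_p^{\mathcal N}u$ replaced by $\Delta_{p,+}^{\mathcal N}u$ (resp.\ $\Delta_{p,-}^{\mathcal N}u$).
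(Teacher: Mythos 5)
Your proof is correct and follows essentially the same route as the paper: rewrite $u(x)-M_r^p u(x)$ as the spherical average of the second difference against $\chi_{[c_p,1]}(\omega\cdot z(x))$, insert the Taylor expansion \eqref{11} with a Peano remainder, and identify the angular integral with $2\alpha_p\,\Delta_p^{\mathcal N}u(x)$ (the paper simply cites \cite[Subsections 4.2.1, 4.2.2]{graddep} for that last identity, which you instead verify by the rotation and moment computation). The only nitpick is that for $u\in C^2$ the Taylor remainder is $o(|y|^2)$ rather than $\mathcal O(|y|^3)$, but on $\partial B_r$ this still integrates to $o(r^2)$, so the conclusion is unaffected.
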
	
	\begin{proof}
	Using the Taylor expansion in \eqref{11} with a Peano remainder, we have that
	\bgs{
		u(x)- M_r^p u(x)= &\; \frac{\gamma_p}2 \int_{\Sf} \left(2u(x)-u(x-r\omega)-u(x+r\omega)\right)\chi_{[c_p,1]}\left(\omega\cdot z(x) \right)d\omega
		\\
		=&\; - \frac{\gamma_pr^2}2 \int_{\Sf} \left(\langle D^2u(x) \omega, \omega\rangle\right) \chi_{[c_p,1]}\left(\omega\cdot z(x) \right)d\omega + o(r^2).
		} 
		As in \cite[Subsections 4.2.1, 4.2.2]{graddep}, it holds that
		\[
		\int_\Sf \langle D^2u(x) \omega, \omega\rangle \chi_{[c_p,1]}\left(\omega\cdot z(x) \right)d\omega = 2\alpha_p  \Delta_p^{\mathcal N} u(x),
		\]
		and the conclusion immediately follows.
	\end{proof}
	An analogue result holds for the suitable $M_r^{p,\pm}$, and the same we can say about the 
 the next theorem in the viscosity setting (which follows from the asymptotic expansion for smooth functions). 
\begin{theorem}\label{theorem234567} Let $\Omega\subset \Rn$ be an open set and let $u\in C(\Omega)\cap L^{\infty}(\Rn)$. Then 
\[ 
	(-\Delta)_p^{\mathcal N} u(x) =0 
	\]
in the viscosity sense if and only if 
	\bgs{
	 			 		\lim_{r \to 0^+} \big(u(x)- M^{p}_r u(x)\big)=
	 			 		o(r^2)
	 \qquad \mbox{ as } \quad r  \to 0^+
			 } 
	holds for all $x\in \Omega$ in the viscosity sense. 	
\end{theorem}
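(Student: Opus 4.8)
The plan is to follow verbatim the scheme already used for Theorems~\ref{theorem}, \ref{theorem2} and \ref{theorem2345}: the pointwise asymptotic expansion for $C^{2}$ functions --- here Proposition~\ref{mvploc122}, together with its $M_r^{p,\pm}$ counterpart for the degenerate case --- is lifted to the viscosity setting by testing against admissible $\varphi$. First I would spell out, in analogy with Definition~\ref{meanvisc23}, what it means for $\lim_{r\to0^+}(u(x)-M_r^p u(x))=o(r^2)$ to hold in the viscosity sense at $x\in\Omega$: for every neighbourhood $U=U(x)\subset\Omega$ and every $\varphi\in C^{2}(\overline U)$ satisfying \eqref{visc23}, letting $v$ be glued as in \eqref{vvv234}, one asks that $\limsup_{r\to0^+}r^{-2}\bigl(v(x)-M_r^{p,+}v(x)\bigr)\le0$ whenever $\varphi$ touches $u$ from above, and $\liminf_{r\to0^+}r^{-2}\bigl(v(x)-M_r^{p,-}v(x)\bigr)\ge0$ whenever $\varphi$ touches $u$ from below (with $M_r^{p,\pm}$ replaced by $M_r^{p}$, computed with $z(x)=\nabla\varphi(x)/|\nabla\varphi(x)|$, at points where $\nabla\varphi(x)\neq0$). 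I would then note that $M_r^{p}v(x)$ and $M_r^{p,\pm}v(x)$ only involve the trace of $v$ on $\partial B_r(x)$, hence for $r$ small they coincide with $M_r^{p}\varphi(x)$, resp.\ $M_r^{p,\pm}\varphi(x)$: the quantity is genuinely local and the direction $z(x)$, when it is needed, is read off $\varphi$.

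Next, for a fixed admissible $\varphi\in C^{2}(\overline U)$, I would apply Proposition~\ref{mvploc122} --- or, at a point where $\nabla\varphi(x)=0$, its $\pm$ analogue, whose proof is the same Taylor expansion \eqref{11} combined with the computation of $\int_{\Sf}\langle D^{2}\varphi(x)\omega,\omega\rangle\,\chi_{[c_p,1]}(\omega\cdot\xi)\,d\omega$ performed in \cite[Subsections 4.2.1, 4.2.2]{graddep} --- to get, as $r\to0^+$,
\[
\varphi(x)-M_r^{p,\pm}\varphi(x)=-C_p\,r^{2}\,\Delta_{p,\pm}^{\mathcal N}\varphi(x)+o(r^{2}),\qquad C_p>0 .
\]
Dividing by $r^{2}$ and sending $r\to0^+$, the remainder disappears and the $\liminf$ and $\limsup$ appearing in the viscosity notion become $-C_p\,\Delta_{p,+}^{\mathcal N}\varphi(x)$, resp.\ $-C_p\,\Delta_{p,-}^{\mathcal N}\varphi(x)$. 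Since $C_p>0$, the requirement $\limsup_{r\to0^+}r^{-2}(v(x)-M_r^{p,+}v(x))\le0$ for all $\varphi$ touching from above is equivalent to $\Delta_{p,+}^{\mathcal N}\varphi(x)\ge0$ for all such $\varphi$, i.e.\ to $u$ being a viscosity subsolution of $(-\Delta)_p^{\mathcal N}u=0$; symmetrically, $\liminf_{r\to0^+}r^{-2}(v(x)-M_r^{p,-}v(x))\ge0$ for all $\varphi$ touching from below is equivalent to $\Delta_{p,-}^{\mathcal N}\varphi(x)\le0$ for all such $\varphi$, i.e.\ to $u$ being a viscosity supersolution. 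Putting the two equivalences together gives the statement, and it proves both implications at once.

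The part I expect to require the most care is the degenerate regime $\nabla\varphi(x)=0$, where $\Delta_p^{\mathcal N}$ is discontinuous: one must check that the $\sup_{\xi}$ (resp.\ $\inf_{\xi}$) built into $M_r^{p,+}$ (resp.\ $M_r^{p,-}$) produces exactly $\Delta_{p,+}^{\mathcal N}\varphi(x)$ (resp.\ $\Delta_{p,-}^{\mathcal N}\varphi(x)$) after integrating the quadratic form $-r^{2}\langle D^{2}\varphi(x)\omega,\omega\rangle$ against $\chi_{[c_p,1]}(\omega\cdot\xi)$, so that the signs match the viscosity-solution definition of \cite{graddep} --- $\Delta_{p,+}^{\mathcal N}$ for subsolutions, $\Delta_{p,-}^{\mathcal N}$ for supersolutions. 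This is precisely the computation already invoked in the proof of Proposition~\ref{mvploc122}, so once the bookkeeping of the sup/inf is in place the argument is complete; everything else is the routine ``smooth expansion $\Rightarrow$ viscosity equivalence'' passage used repeatedly above.
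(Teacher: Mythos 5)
Your proposal is correct and follows exactly the route the paper intends: the paper gives no explicit proof of Theorem \ref{theorem234567}, stating only that it "follows from the asymptotic expansion for smooth functions" (Proposition \ref{mvploc122} and its $M_r^{p,\pm}$ analogue) via the same test-function passage used for Theorems \ref{theorem}, \ref{theorem2} and \ref{theorem2345}, which is precisely what you carry out, including the correct observation that $M_r^{p}v(x)=M_r^{p}\varphi(x)$ for small $r$ and the sign bookkeeping coming from $C_p>0$. Your explicit attention to the degenerate case $\nabla\varphi(x)=0$ and the matching of $M_r^{p,\pm}$ with $\Delta_{p,\pm}^{\mathcal N}$ is, if anything, more careful than the paper's one-line remark.
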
}

\subsection{The infinity fractional Laplacian}\label{9876}
In this section, we deal with the infinity fractional Laplacian, arising in a nonlocal tug-of-war game, as introduced in \cite{bjor}. Therein, the authors deal with viscosity solutions of a Dirichlet monotone problem and a monotone double obstacle problem, providing a comparison principle on compact sets and H\"{o}lder regularity of solutions.

The infinity Laplacian in the non-divergence form is defined by omitting the term $|\nabla u|^2$, precisely when $\nabla u(x)=0$,
	\bgs{
		&\Delta_{\infty,+} u(x):= \sup_{\xi \in \Sf } \langle D^2u(x) \, \xi, \xi\rangle 
		\quad 
		\Delta_{\infty,-}u(x) := \inf_{\xi \in \Sf } \langle D^2u(x)\, \xi, \xi\rangle.}
		\textcolor{black}{and formally
		\bgs{\Delta_\infty u(x) :=\frac{ \Delta_{\infty,+} u(x)+ \Delta_{\infty,-} u(x)}2,
		}}
		whereas when  $\nabla u(x)\neq 0$,
	\[ 
		\Delta_{\infty} u(x):=\Delta_{\infty,\pm} u(x)=  \langle D^2u(x) \, z(x), z(x)\rangle, \quad \mbox{ where } \quad z(x) = \frac{\nabla u(x)}{|\nabla u(x)|}. 
		\]

The definition in the fractional case is well posed for $s\in (1/2,1)$, given in \cite[Definition 1.1]{bjor}. Let $s\in (\frac 12, 1)$. The infinity fractional Laplacian 
is defined in the following way:
\begin{itemize}
\item If $\nabla u(x)\neq 0$ then
	\eqlab{ \label{opneq}
		 (-\Delta)^s_\infty u(x):=\int_0^\infty \frac{ 2u(x) -u(x+\rho z(x)) -u(x-\rho z(x)) }{\rho^{1+2s}} d\rho ,
		}  
		where $ z(x)=\frac{\nabla u(x)}{|\nabla u(x)|}\in \Sf$.
\item If $\nabla u(x)=0$ then
\eqlab{ \label{opeq}
		 (-\Delta)^s_\infty u(x):=\sup_{\omega \in \Sf} \inf_{\zeta \in \Sf} \int_0^\infty \frac{ 2u(x) -u(x+\rho \omega)  -u(x-\rho \zeta) }{\rho^{1+2s}} d\rho .
		}  

	\end{itemize}
	
		There exist ``infinity harmonic functions'':  it is proved in  \cite{bjor} that the function 
		\[ C(x)=A|x-x_0|^{2s-1} +B\]
		satisfies 
		\[ (-\Delta)^s_\infty u(x)= 0\quad  \mbox{ for any } x\neq x_0.\]    
		
	We denote
		\[\L u(x,\omega,\zeta) :=\int_0^\infty \frac{2u(x) -u(x+\rho \omega) -u(x-\rho \zeta)}{\rho^{1+2s}}\, d\rho \]
and for $r>0$
	\bgs{
		M^s_r u(x,\omega,\zeta):=
		 c_s r^{2s} \int_r^\infty \frac{u(x+\rho \omega) + u(x-\rho \zeta)}{(\rho^2-r^2)^s\rho} d\rho ,		
	}
	with
	\[ c_{s}:= \textcolor{black}{\frac12}\left(\int_1^{\infty} \frac{d \rho}{\rho(\rho^2-1)^s}\right)^{-1} = \frac{\sin \pi s}{\pi}.
	\]
	We define the operators

	\begin{itemize}
	\item if $\nabla u(x)\neq 0$
	\[ \M_{r}^{s,\infty} u(x): = M_r^s u(x,z(x),z(x)), \qquad \mbox{ with } z(x)=\frac{\nabla u(x)}{|\nabla u(x)|},\]
	\item  if $\nabla u(x)= 0$
	\[ \M_r^{s,\infty} u(x):= \sup_{\omega\in \Sf} \;  \inf_{  \zeta \in \Sf}  M_r^s u(x,\omega,\zeta).
	 \]    
	\end{itemize}


We obtain the asymptotic mean value property for smooth functions, as follows.
\begin{theorem}\label{asympw}
Let $\eta>0, x\in\Rn$ and let $u\in C^{2}(B_\eta(x)) \cap L^\infty(\Rn)$. Then 	
	\bgs{ u(x)= \M_r^{s,\infty} u(x) +
		  c(s) r^{2s} \frin u(x)	  +\textcolor{black}{\mathcal O(r^{2})} 
	}
	as $r\to 0^+$.
\end{theorem}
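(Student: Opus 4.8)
The plan is to follow the scheme of Theorem~\ref{mrspw}, exploiting that both $\frin$ and $\M_r^{s,\infty}$ are essentially one-dimensional objects along the direction $z(x)=\nabla u(x)/|\nabla u(x)|$. Consider first the case $\nabla u(x)\neq 0$. I would fix an arbitrary $\bar\eps>0$ and the corresponding $r=r(\bar\eps)\in(0,\eta/2)$ as in \eqref{bareps}. After the substitution $\rho\mapsto r\rho$ one checks the normalization $2c_s r^{2s}\int_r^\infty(\rho^2-r^2)^{-s}\rho^{-1}\,d\rho=1$, so that, recalling $\M_r^{s,\infty}u(x)=M_r^s u(x,z(x),z(x))$,
\bgs{
u(x)-\M_r^{s,\infty}u(x)=c_s r^{2s}\int_r^\infty\frac{2u(x)-u(x+\rho z(x))-u(x-\rho z(x))}{(\rho^2-r^2)^s\rho}\,d\rho .
}

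The next step is to split the kernel as $\frac{1}{(\rho^2-r^2)^s\rho}=\frac{1}{\rho^{1+2s}}+\big(\frac{1}{(\rho^2-r^2)^s\rho}-\frac{1}{\rho^{1+2s}}\big)$. The first summand produces $c_s r^{2s}\int_r^\infty(2u(x)-u(x+\rho z)-u(x-\rho z))\rho^{-1-2s}\,d\rho=c_s r^{2s}\frin u(x)-c_s r^{2s}\int_0^r(2u(x)-u(x+\rho z)-u(x-\rho z))\rho^{-1-2s}\,d\rho$; since $u\in C^2(B_\eta(x))$, the second order Taylor expansion \eqref{11} gives $|2u(x)-u(x+\rho z)-u(x-\rho z)|\le C\rho^2$ for $\rho<r$, hence $\int_0^r(\cdots)\,d\rho=\mathcal O(r^{2-2s})$ and this summand equals $c_s r^{2s}\frin u(x)+\mathcal O(r^2)$. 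For the second summand I would rescale $\rho=rt$, turning it into $c_s\int_1^\infty(2u(x)-u(x+rtz)-u(x-rtz))\big((t^2-1)^{-s}t^{-1}-t^{-1-2s}\big)\,dt$, and split the integral at $t=\eta/r$: on the tail $t>\eta/r$ one uses only $u\in L^\infty(\Rn)$ together with the asymptotics \eqref{trois} to obtain $\mathcal O(r^{2+2s})$, while on $1<t<\eta/r$ one has $x\pm rtz\in B_\eta(x)$, so \eqref{11} yields $|2u(x)-u(x+rtz)-u(x-rtz)|\le Cr^2t^2$ and, since $\int_1^{\eta/r}t\big((t^2-1)^{-s}-t^{-2s}\big)\,dt$ is bounded uniformly in $r$ by \eqref{eun}, this part is $\mathcal O(r^2)$. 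Collecting everything gives $u(x)-\M_r^{s,\infty}u(x)=c_s r^{2s}\frin u(x)+\mathcal O(r^2)$, which is the assertion with $c(s)=c_s=\frac{\sin\pi s}{\pi}$.

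For the degenerate case $\nabla u(x)=0$ I would observe that both $\L u(x,\omega,\zeta)$ and $M_r^s u(x,\omega,\zeta)$ decouple into a sum of a term depending only on $\omega$ and a term depending only on $\zeta$, so that the operation $\sup_\omega\inf_\zeta$ splits as $\sup_\omega(\cdot)+\inf_\zeta(\cdot)$; applying the one-dimensional estimate above along each of the two increments separately, and noting that the remainders are uniform in $\omega,\zeta$ because they only involve $\|u\|_{L^\infty(\Rn)}$ and $\|D^2 u\|_{L^\infty(B_\eta(x))}$, delivers the same expansion. The main obstacle is, I expect, essentially one of bookkeeping: one must track the scaling exponents so that the two remainder contributions collapse to exactly $\mathcal O(r^2)$ rather than the weaker $\mathcal O(r^{2-2s})$ coming naively from the singular kernel, and, when $\nabla u(x)=0$, one must secure uniformity of all estimates in $\omega$ and $\zeta$ before passing to the $\sup$--$\inf$.
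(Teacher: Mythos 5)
Your proposal is correct and follows essentially the same route as the paper: the same normalization identity, the same splitting of the kernel into $\rho^{-1-2s}$ plus a difference term, the same rescaling and cut at $\eta/r$, and the same use of \eqref{11}, \eqref{eun} and \eqref{trois} to get the $\mathcal O(r^2)$ and $\mathcal O(r^{2+2s})$ remainders (the paper merely groups the near-origin piece $I_r$ with $J_r^1$ instead of estimating them separately). Your explicit treatment of the $\sup$--$\inf$ case via the decoupling in $\omega$ and $\zeta$ and uniformity of the remainders is a small but welcome addition, since the paper proves the expansion for generic $(\omega,\zeta)$ and leaves that step implicit.
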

	
\begin{proof}
We have that
	\bgs{\label{first} 
	u(x)- M_r^s u(x,\omega,\zeta) 
		  =&\, c_s r^{2s} \int_r^\infty   \frac{2u(x) -u(x+\rho \omega)-u(x-\rho \zeta)}{\rho(\rho^2 -r^2)^s } d\rho,
	  }
	  hence
	  \bgs{
	  	u(x)- M_r^s u(x,\omega,\zeta)  	=&\;  c_s \bigg[ r^{2s} \L u(x,\omega,\zeta) 
	  	- \int_{B_1}  \frac{2u(x) -u(x+r \rho \omega)-u(x-r \rho \zeta)}{\rho^{1+2s} } d\rho 
	  	\\
	  	&\;+ \int_{\C B_1}  \frac{2u(x) -u(x+r \rho \omega)-u(x-r \rho \zeta)}{\rho^{1+2s} } \left(\frac{\rho^{2s}}{(\rho^2-1)^{2s} }-1 \right) d\rho \bigg]
	  	\\
	  	=: &\;   c_s \left( r^{2s} \L u(x,\omega,\zeta)  - I_r +J_r\right).
	  	 }
Then 
	\bgs{
		J_r=&\;  \int_{B_{\frac{\eta}r\setminus  B_1}}  \frac{2u(x) -u(x+r \rho \omega)-u(x-r \rho \zeta)}{\rho^{1+2s} } \left(\frac{\rho^{2s}}{(\rho^2-1)^{2s}  }-1 \right) d\rho  
		\\
		&\;+\int_{\C B_{\frac{\eta}r} } \frac{2u(x) -u(x+r \rho \omega)-u(x-r \rho \zeta)}{\rho^{1+2s} } \left(\frac{\rho^{2s}}{(\rho^2-1)^{2s} }-1 \right) d\rho 
		\\
		=:&\; J_r^1+J_2^r.
	}
We proceed as in the proof of Theorem \ref{mrspw}, using also \eqref{11} and Proposition \ref{useful}, and obtain that
	\[
	J^2_r= \mathcal O(r^{2s+2s}) \qquad \mbox{and} \qquad J^1_r-I_r= \textcolor{black}{\mathcal O(r^{2})} .
	\]
This concludes the proof of the Theorem.
\end{proof}

		The main result of this section, which follows from Theorem \ref{asympw}, is stated next.
		
\begin{theorem}\label{theorem1} Let $\Omega\subset \Rn$ be an open set and let $u\in C(\Omega)\cap L^{\infty}(\Rn)$. The asymptotic expansion
	\eqlab{ \label{eq1}
	 u(x)=  \M^{s,\infty}_r u(x) 
	 +o(r^{2s}), \qquad \mbox{ as } r\to 0
	 } 
	holds for all $x\in \Omega$ in the viscosity sense if and only if
	\[ 
	\frin u(x) =0 
	\]
in the viscosity sense.
\end{theorem}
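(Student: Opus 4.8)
The plan is to derive Theorem \ref{theorem1} from the smooth-function asymptotic expansion in Theorem \ref{asympw} by the standard viscosity-solution argument, exactly as Theorem \ref{theorem2} follows from Theorem \ref{mrspw}. Fix $x\in\Omega$. To test the asymptotic expansion \eqref{eq1} at $x$ in the viscosity sense, one takes a neighborhood $U=U(x)\subset\Omega$ and a test function $\varphi\in C^2(\overline U)$ touching $u$ from above (for the supersolution inequality) or from below (for the subsolution inequality) at $x$ in the sense of \eqref{visc23}, and forms the glued function $v$ as in \eqref{vvv234}, so that $v\in C^2(U)\cap L^\infty(\Rn)$, $v(x)=u(x)$, and $v\geq u$ (resp. $v\leq u$) everywhere. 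Applying Theorem \ref{asympw} to $v$ gives
\[
u(x)-\M^{s,\infty}_r v(x)=v(x)-\M^{s,\infty}_r v(x)=c(s)\,r^{2s}\,\frin v(x)+\mathcal O(r^2),\qquad r\to 0^+.
\]

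First I would treat the implication ``$\frin u(x)=0$ in the viscosity sense $\Rightarrow$ \eqref{eq1} holds in the viscosity sense''. Suppose $\varphi$ touches $u$ from above at $x$ with $v$ as above; the definition of viscosity supersolution gives $\frin v(x)\geq 0$, hence dividing the displayed identity by $r^{2s}$ and letting $r\to 0^+$ (using $2>2s$ since $s<1$, so $\mathcal O(r^2)/r^{2s}\to 0$) yields $\liminf_{r\to 0^+}\big(u(x)-\M^{s,\infty}_r v(x)\big)/r^{2s}\geq 0$, which is precisely the lower inequality in the viscosity definition of \eqref{eq1}. Symmetrically, a test function touching from below gives $\frin v(x)\leq 0$ and hence the upper inequality $\limsup_{r\to 0^+}\big(u(x)-\M^{s,\infty}_r v(x)\big)/r^{2s}\leq 0$. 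Thus \eqref{eq1} holds in the viscosity sense at $x$, and since $x\in\Omega$ was arbitrary, everywhere on $\Omega$.

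Conversely, assuming \eqref{eq1} holds in the viscosity sense on $\Omega$, I would reverse the chain: given $\varphi$ touching $u$ from above at $x$ and the corresponding $v$, the hypothesis gives $\liminf_{r\to 0^+}\big(u(x)-\M^{s,\infty}_r v(x)\big)/r^{2s}\geq 0$, and the identity from Theorem \ref{asympw} forces $c(s)\,\frin v(x)\geq 0$; since $c(s)=\tfrac{\sin\pi s}{\pi}>0$ for $s\in(\tfrac12,1)$, we get $\frin v(x)\geq 0$, i.e. $u$ is a viscosity supersolution of $\frin u=0$ at $x$. The subsolution inequality is obtained the same way using test functions from below. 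Hence $\frin u(x)=0$ in the viscosity sense.

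One technical point deserving care — and the only place where anything could go wrong — is the well-posedness and continuity of $\frin v(x)$ and $\M^{s,\infty}_r v(x)$ when $\nabla v(x)=0$, since then these objects are defined through a $\sup_\omega\inf_\zeta$ over $\Sf$ rather than along a fixed direction; but Theorem \ref{asympw} is stated for all $u\in C^2(B_\eta(x))\cap L^\infty(\Rn)$ without a nonvanishing-gradient hypothesis, so the identity is available in both regimes, and the comparison $v\geq u$ (resp. $v\leq u$) together with $v(x)=u(x)$ already accounts for whatever one-sided behavior of $\M^{s,\infty}_r$ one needs; I do not expect a genuine obstacle here, only the bookkeeping of the two cases $\nabla v(x)\neq 0$ and $\nabla v(x)=0$. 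Finiteness of the singular integrals defining $\frin v(x)$ follows from $v\in C^2$ near $x$ (controlling the integrand near $\rho=0$) and $v\in L^\infty(\Rn)$ with $2s>1$ (controlling the tail), exactly as in the cited references \cite{bjor}.
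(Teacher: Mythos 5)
Your argument is correct and is exactly the paper's (essentially unwritten) proof: Theorem \ref{theorem1} is deduced from the smooth expansion of Theorem \ref{asympw} applied to the glued test function $v$, dividing by $r^{2s}$ and using $\mathcal O(r^2)/r^{2s}\to 0$, in the same way Theorem \ref{theorem2} follows from Theorem \ref{mrspw}. Only note that your pairing of ``touching from above'' with the \emph{supersolution} inequality is the opposite of the sign convention in the paper's Definition of viscosity sub/supersolutions; this is a labelling issue that does not affect the validity of the equivalence.
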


We investigate also the limit case $s\to 1^-$. 
 	
\begin{proposition}
 Let $\Omega\subset \Rn$ be an open set and  $u\in C^{1}(\Omega) \cap L^\infty(\Rn)$. Then 
	\sys[ 		\lim_{s\to 1^-}  \M^s_r u(x) = \M_r^\infty u(x) :=] 
	{&\frac12\left(u\big(x+rz(x)\big)+u\big(x-rz(x)\big)\right) && \mbox{ when } \nabla u(x)\neq 0,\\
&\frac12\left(  \sup_{\omega \in \Sf} u(x+r\omega) + \inf_{\zeta\in\Sf}u(x-r\zeta)\right) && \mbox{ when } \nabla u(x)= 0,}
	for every $x\in \Omega, r>0$ such that $B_{2r}(x)\subset \Omega$.
\end{proposition}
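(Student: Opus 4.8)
The plan is to reduce the whole statement to the single scalar limit
\[
\lim_{s\to 1^-} M^s_r u(x,\omega,\zeta) = \tfrac12\big(u(x+r\omega)+u(x-r\zeta)\big),
\]
valid for each fixed pair $(\omega,\zeta)\in\Sf\times\Sf$, and then either to evaluate at $\omega=\zeta=z(x)$ (the case $\nabla u(x)\neq0$) or to pass the limit through $\sup_\omega\inf_\zeta$ (the case $\nabla u(x)=0$). To prove the scalar limit I would first rescale: the substitution $\rho=rt$ turns $M^s_r u(x,\omega,\zeta)$ into
\[
c_s\int_1^\infty \frac{u(x+rt\omega)+u(x-rt\zeta)}{t\,(t^2-1)^s}\,dt,
\]
the factor $r^{2s}$ cancelling out. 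Fixing $\eps\in(0,1)$ small, the tail $\int_{1+\eps}^\infty$ is bounded, uniformly in $(\omega,\zeta)$ and in $s$ near $1$, by $2\|u\|_{L^\infty(\Rn)}\int_{1+\eps}^\infty \frac{dt}{t(t^2-1)^s}$, which is controlled by Proposition \ref{useful}; since $c_s=\frac{\sin\pi s}{\pi}\to0$ as $s\to1^-$, this piece vanishes in the limit.

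On $(1,1+\eps)$ I would write $(t^2-1)^s=(t-1)^s(t+1)^s$ and integrate by parts against $\frac{d}{dt}\frac{(t-1)^{1-s}}{1-s}$, exactly as in the proof of Lemma \ref{ghirs} (and of the preceding proposition on $M^{s,p}_r$). With $g(t):=\dfrac{u(x+rt\omega)+u(x-rt\zeta)}{t(t+1)^s}$, the boundary term is
\[
c_s\,\frac{\eps^{1-s}}{1-s}\;\frac{u(x+r(1+\eps)\omega)+u(x-r(1+\eps)\zeta)}{(1+\eps)(2+\eps)^s},
\]
and since $\frac{c_s}{1-s}=\frac{\sin\pi s}{\pi(1-s)}\to1$ and $\eps^{1-s}\to1$, it converges to $\frac{u(x+r(1+\eps)\omega)+u(x-r(1+\eps)\zeta)}{(1+\eps)(2+\eps)}$. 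The remaining integral term $c_s\int_1^{1+\eps}\frac{(t-1)^{1-s}}{1-s}g'(t)\,dt$ is $\mathcal O(\eps)$, using $u\in C^1(B_{2r}(x))$ to bound $g'$ on $[1,1+\eps]$ (note $x\pm rt\omega\in B_{2r}(x)$ for $t\in[1,1+\eps]$). Letting $s\to1^-$ and then $\eps\to0$ yields the claimed scalar limit.

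For $\nabla u(x)\neq0$ the statement is then immediate from $\M^{s,\infty}_r u(x)=M^s_r u(x,z(x),z(x))$. For $\nabla u(x)=0$ I would upgrade the scalar limit to convergence that is \emph{uniform} in $(\omega,\zeta)\in\Sf\times\Sf$: every estimate above (tail bound, boundary term, $\mathcal O(\eps)$ remainder) depends on $u$ only through $\|u\|_{L^\infty(\Rn)}$ and $\sup_{B_{2r}(x)}|\nabla u|$, hence is uniform on the compact set $\Sf\times\Sf$. Uniform convergence lets me commute $\lim_{s\to1^-}$ with $\sup_\omega\inf_\zeta$, and since the limit $\tfrac12\big(u(x+r\omega)+u(x-r\zeta)\big)$ separates the two variables, $\sup_\omega\inf_\zeta$ of it equals $\tfrac12\big(\sup_\omega u(x+r\omega)+\inf_\zeta u(x-r\zeta)\big)=\M^\infty_r u(x)$. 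The only genuine obstacle I anticipate is the bookkeeping for this uniform-in-$(\omega,\zeta)$ control; the scalar computation itself is a direct transcription of the $s\to1^-$ arguments already carried out earlier in the paper, so no new idea is needed there.
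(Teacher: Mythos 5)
Your proposal is correct and follows essentially the same route as the paper: rescale to $\int_1^\infty$, split at $1+\eps$, kill the tail via Proposition \ref{useful} and $c_s\to 0$, integrate by parts on $(1,1+\eps)$ so only the boundary term survives, then send $\eps\to 0$. Your explicit verification that the estimates are uniform in $(\omega,\zeta)$, which justifies commuting the limit with $\sup_\omega\inf_\zeta$ when $\nabla u(x)=0$, is a point the paper passes over in silence, and is a welcome addition rather than a deviation.
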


\begin{proof}
For some $\eps>0$ small enough, we have that
 	\eqlab{ \label{opium}
 		M_r^s u(x,\omega ,\zeta)  =&\,	c_s\left( \int_{1+\eps}^\infty \frac{u(x+r\rho \omega)+ u(x-r\rho \zeta)}{(\rho^2-1)^s\eta}\, d\rho  + \int_1^{1+\eps} \frac{u(x+r\rho \omega)+ u(x-r\rho \zeta)}{(\eta^2-1)^s\rho}\, d\rho \right) 			 \\
			 =:&\,  I_s^1 +  I_s^2.
  	}
Using Proposition \ref{useful}, we get that
	\[
	\lim_{s\to 1}c_s  I_s^1=0 .
	\]
		Integrating by parts in $I_s^2$, we have
		\bgs{ 
		\left|\int_1^{1+\eps}\frac{u(x + r\rho \omega )}{(\eta^2-1)^s\rho}   d\rho  
				- \frac{\eps^{1-s} u\left(x + r(1+\eps)\omega\right)}{(1-s)(\eps+2)^s (1+\eps)}\right|
				\leq 				C\frac{\eps^{2-s} }{1-s} ,
	}
thus
		\bgs{
		 \bigg| \mathcal I_s^2 -&\,   
		 \frac{\eps^{1-s}}{(1-s)(\eps+2)^s (1+\eps)}
	  \big( u\left(x +r(1+\eps)\omega\right) +u\left(x-r(1+\eps)\zeta \right)\big)  \bigg|
		\leq  C \frac{\eps^{2-s}}{1-s}  .
	}
	We get that
	\bgs{
		\lim_{s\to 1^-} c_s \mathcal I_s^2 = \frac{1}{ (\eps +2)(\eps+1) } \big( u(x+r(1+\eps)y)+u(x-r(1+\eps)z) \big) \, + C \eps.
	}
	Sending $\eps \to 0$ we get the conclusion.
 \end{proof}

For completeness, we show the following, already known, result.

\begin{proposition}
Let $u\in C^{2}
(\Omega) \cap L^\infty(\Rn)$. For all $x\in \Omega$ for which $|\nabla u (x)|\neq 0$ it holds that 
\[ 		\lim_{s\to 1^-}  (1-s) (-\Delta)^s_\infty u(x) = -\Delta_\infty u(x).\]
\end{proposition}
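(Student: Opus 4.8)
The plan is to follow the same pattern as Theorem \ref{limits1}, specializing the computation to the one--dimensional integral along the gradient direction. Write $z=z(x)=\nabla u(x)/|\nabla u(x)|$, which is a fixed unit vector since $\nabla u(x)\neq 0$; by continuity of $\nabla u$ we may take $r=r(\bar\eps)>0$ small so that the analogue of \eqref{bareps} holds on $B_r(x)$, and split
\bgs{
(-\Delta)^s_\infty u(x) = \int_0^\infty \frac{2u(x)-u(x+\rho z)-u(x-\rho z)}{\rho^{1+2s}}\,d\rho = \int_0^r(\cdots)\,d\rho + \int_r^\infty(\cdots)\,d\rho.
}
The tail $\int_r^\infty$ is bounded by $4\|u\|_{L^\infty(\Rn)}\int_r^\infty \rho^{-1-2s}\,d\rho = C r^{-2s}/(2s)$, so $(1-s)$ times it vanishes as $s\to1^-$, exactly as \eqref{pnonloc}.

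For the inner part $\int_0^r$, I would Taylor expand $u$ to second order along the line through $x$ in direction $z$: there exist $\underline\delta,\overline\delta\in(0,1)$ with
\bgs{
2u(x)-u(x+\rho z)-u(x-\rho z) = -\rho^2\,\langle D^2u(x)z,z\rangle + T,\qquad |T|\le \bar\eps\,\rho^2,
}
which is the one--dimensional version of \eqref{11}. Substituting,
\bgs{
\int_0^r \frac{2u(x)-u(x+\rho z)-u(x-\rho z)}{\rho^{1+2s}}\,d\rho = -\langle D^2u(x)z,z\rangle\int_0^r \rho^{1-2s}\,d\rho + \mathcal O\!\Big(\bar\eps \int_0^r\rho^{1-2s}\,d\rho\Big),
}
and since $s\in(1/2,1)$ the integral $\int_0^r\rho^{1-2s}\,d\rho = r^{2-2s}/(2-2s)$ converges. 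Hence $(1-s)\int_0^r(\cdots)\,d\rho = -\tfrac12 \langle D^2u(x)z,z\rangle\, r^{2-2s} + \mathcal O(\bar\eps\, r^{2-2s})$; letting $s\to1^-$ gives $r^{2-2s}\to1$, so the limit is $-\tfrac12\langle D^2u(x)z,z\rangle + \mathcal O(\bar\eps)$, and finally sending $\bar\eps\to0$ yields $-\langle D^2u(x)z,z\rangle = -\Delta_\infty u(x)$ by the very definition of $\Delta_\infty u(x)$ when $\nabla u(x)\neq0$.

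There is no serious obstacle here; the one genuine point to be careful about is the order of limits (first $\eps\to0$ — absent here since the principal value is along a fixed ray — then $s\to1^-$, then $\bar\eps\to0$) and the fact that the constant in front is $1$ rather than $\gamma_p$: because the ``angular integration'' degenerates to a single direction, all the spherical--average constants $\gamma_p,\gamma_p'$ of Theorem \ref{limits1} collapse, and the normalization $c_s=\tfrac12\big(\int_1^\infty \rho^{-1}(\rho^2-1)^{-s}\,d\rho\big)^{-1}$ is not needed for this statement (it is needed for the mean--kernel expansion, not for the operator limit). One could alternatively invoke Theorem \ref{limits1} with $p\to\infty$ heuristically, but the direct one--dimensional computation above is cleaner and self--contained, so that is the route I would take.
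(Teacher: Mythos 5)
Your proof is essentially the paper's own argument: the same splitting at $r=r(\bar\eps)$ into $I_r+J_r$, the same tail bound $|J_r|\le C\|u\|_{L^\infty(\Rn)}r^{-2s}/(2s)$ killed by the factor $(1-s)$, and the same second-order Taylor expansion along the ray in direction $z(x)$ (the one-dimensional version of \eqref{11}). One small remark: your last line is internally inconsistent — you correctly compute $(1-s)\int_0^r\rho^{1-2s}\,d\rho\to\tfrac12$, so the limit your computation actually produces is $-\tfrac12\langle D^2u(x)z,z\rangle=-\tfrac12\Delta_\infty u(x)$, and the factor $\tfrac12$ should not silently disappear in the final sentence (the paper's own proof has the same unexplained factor, so this is a normalization issue to flag rather than a defect of your approach).
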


\begin{proof}
		Since $u\in C^2(\Omega)$ we have that for any $\bar \eps>0$ there exists $ r= r(\bar \eps)>0$ such that 
	\eqref{bareps} holds. 
	We prove the result for $\nabla u(x) \neq 0$ (the other case can be proved in the same way).  We have that
	\bgs{
	(-\Delta)^s_\infty=&\; \int_0^r \frac{2u(x) -u\left(x+\rho z(x)\right) -u\left(x-\rho z(x)\right)}{\rho^{1+2s}}\, d\rho 
	\\
	 &\;+ \int_r^\infty \frac{2u(x) -u\left(x+\rho z(x)\right) -u\left(x-\rho z(x)\right)}{\rho^{1+2s}}\, d\rho 
	= I_r +J_r. 
	}
	We have that
	\[
	 |J_r| \leq C\|u\|_{L^\infty(\Rn)} \frac{r^{-2s}}{2s},
	 \qquad \mbox{ and } \qquad 
		\lim_{s \to 1^-} (1-s) J_r =0.
		\] 	
		On the other hand, using \eqref{11} we have that
		\bgs{
		I_r =&\;   -\int_0^r \frac{ \langle D^2u(x) z(x), z(x) \rangle } \rho^{1-2s} \, d\rho + I^o_r
		=  -\langle D^2u(x) z(x), z(x) \rangle \frac{r^{2-2s}}{2(1-s)} + I^o_r ,
		}
		with
		\[ 
		\lim_{s\to 1^-} (1-s) I^o_r = \mathcal O(\bar \eps).
		\]
		The conclusion follows by sending $\bar \eps \to 0$.
	\end{proof}
	\textcolor{black}{We mention that the mean value property for the infinity Laplacian is settled in \cite{MPR}. For the sake of completeness, we however write the very simple expansion for the infinity Laplacian.
	\begin{proposition}
 \label{mvploc189}
If $u\in C^2(\Omega)$, then
	\[
		u(x) -M_r^\infty u(x)=- c r^2 \Delta_\infty u(x) + o(r^2).
	\]
\end{proposition}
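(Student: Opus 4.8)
The plan is to mimic the proof of Proposition \ref{mvploc122}: a second order Taylor expansion of $u$ around $x$ with a Peano remainder, carried out separately in the two regimes occurring in the definition of $M_r^\infty u(x)$.

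Assume first $\nabla u(x)\neq 0$, so that $M_r^\infty u(x)=\tfrac12\big(u(x+rz(x))+u(x-rz(x))\big)$ with $z(x)=\nabla u(x)/|\nabla u(x)|\in\Sf$. Since $u\in C^2(\Omega)$,
\[
u(x\pm rz(x))=u(x)\pm r\,\nabla u(x)\cdot z(x)+\frac{r^2}{2}\langle D^2u(x)z(x),z(x)\rangle+o(r^2),
\]
and averaging the two expansions the first order terms cancel, so that
\[
u(x)-M_r^\infty u(x)=-\frac{r^2}{2}\langle D^2u(x)z(x),z(x)\rangle+o(r^2)=-\frac{r^2}{2}\,\Delta_\infty u(x)+o(r^2),
\]
which is the claim with $c=\tfrac12$.

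Assume now $\nabla u(x)=0$, so that $M_r^\infty u(x)=\tfrac12\big(\sup_{\omega\in\Sf}u(x+r\omega)+\inf_{\zeta\in\Sf}u(x-r\zeta)\big)$. Writing $u(x+r\omega)=u(x)+\tfrac{r^2}{2}\langle D^2u(x)\omega,\omega\rangle+R(r,\omega)$, the key point is that, $D^2u$ being continuous near $x$ and $\Sf$ being compact, the remainder is uniform in direction, i.e. $\sup_{\omega\in\Sf}|R(r,\omega)|=o(r^2)$. Hence
\[
\sup_{\omega\in\Sf}u(x+r\omega)=u(x)+\frac{r^2}{2}\sup_{\omega\in\Sf}\langle D^2u(x)\omega,\omega\rangle+o(r^2)=u(x)+\frac{r^2}{2}\,\Delta_{\infty,+}u(x)+o(r^2),
\]
and likewise $\inf_{\zeta\in\Sf}u(x-r\zeta)=u(x)+\tfrac{r^2}{2}\,\Delta_{\infty,-}u(x)+o(r^2)$. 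Averaging and recalling that $\Delta_\infty u=\tfrac12\big(\Delta_{\infty,+}u+\Delta_{\infty,-}u\big)$ yields again $u(x)-M_r^\infty u(x)=-\tfrac{r^2}{2}\Delta_\infty u(x)+o(r^2)$, so that $c=\tfrac12$ works in both cases.

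The only genuine obstacle is establishing the uniformity of the Taylor remainder over $\Sf$ in the degenerate case $\nabla u(x)=0$, since it is precisely this uniformity that lets one interchange the supremum and infimum with the $o(r^2)$ error; apart from that, the computation is just the cancellation of the odd-order terms in a symmetric average.
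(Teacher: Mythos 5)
Your proof is correct, and it follows exactly the route the paper intends (the paper states this proposition without proof, calling it "very simple", and your argument is the same second-order Taylor expansion with Peano remainder used for Proposition \ref{mvploc122}). The uniformity of the remainder over $\Sf$ in the degenerate case, which you rightly flag, is indeed the only point needing care, and it follows in the standard way from the continuity of $D^2u$ near $x$ together with the compactness of the sphere.
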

An immediate consequence is the following theorem in the viscosity setting.
\begin{theorem}\label{theorem234589} Let $\Omega\subset \Rn$ be an open set and let $u\in C(\Omega)\cap L^{\infty}(\Rn)$. Then 
\[ 
	(-\Delta)_\infty u(x)=0
	\]
in the viscosity sense if and only if 
	\bgs{
	 			 		\lim_{r \to 0^+} \big(u(x)- M^{\infty}_r u(x)\big)=
	 			 		o(r^2)
	 \qquad \mbox{ as } \quad r  \to 0^+
			 } 
	holds for all $x\in \Omega$ in the viscosity sense. 	
\end{theorem}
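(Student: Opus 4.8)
The plan is to obtain the equivalence as an immediate consequence of the pointwise expansion for $C^2$ functions in Proposition \ref{mvploc189}, following verbatim the scheme already employed for Theorems \ref{theorem}, \ref{theorem2345}, \ref{theorem2} and \ref{theorem234567}. First I would recall the relevant viscosity notions, which are the obvious analogues of those introduced in Subsection \ref{visc} and before Theorem \ref{theorem2}: the statement $(-\Delta)_\infty u(x)=0$ \emph{in the viscosity sense} means that whenever a test function $\varphi\in C^2(\overline U)$ touches $u$ from above (resp.\ from below) at $x$ on a neighborhood $U=U(x)\subset\Omega$ as in \eqref{visc23}, and $v$ is defined as in \eqref{vvv234}, one has $-\Delta_\infty v(x)\le 0$ (resp.\ $\ge 0$); and likewise $\lim_{r\to 0^+}\big(u(x)-M_r^\infty u(x)\big)=o(r^2)$ holds at $x$ \emph{in the viscosity sense} if, for every such $\varphi$ and the corresponding $v$, one has $\limsup_{r\to 0^+} r^{-2}\big(v(x)-M_r^\infty v(x)\big)\le 0$ when $\varphi$ touches from above and $\liminf_{r\to 0^+} r^{-2}\big(v(x)-M_r^\infty v(x)\big)\ge 0$ when $\varphi$ touches from below.

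The core of the argument is then the following. Fix $x\in\Omega$, a neighborhood $U=U(x)\subset\Omega$ and a test function $\varphi$ as above; set $v$ as in \eqref{vvv234}, so that $v\in C^2(U(x))\cap L^\infty(\Rn)$. Applying Proposition \ref{mvploc189} to $v$ yields
\[
v(x)-M_r^\infty v(x)=-c\,r^2\,\Delta_\infty v(x)+o(r^2),\qquad r\to 0^+ ,
\]
with $c>0$, whence $r^{-2}\big(v(x)-M_r^\infty v(x)\big)\to -c\,\Delta_\infty v(x)=-c\,\Delta_\infty\varphi(x)$ as $r\to 0^+$. Because this limit actually exists, its sign is governed exactly by $-\Delta_\infty\varphi(x)$, so the viscosity subsolution (resp.\ supersolution) inequality for $-\Delta_\infty u=0$ at $x$ is equivalent, through this single relation, to the corresponding $\limsup$ (resp.\ $\liminf$) inequality for $u(x)-M_r^\infty u(x)$ at $x$. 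Ranging over all admissible pairs $(U,\varphi)$ and all $x\in\Omega$ gives both implications simultaneously, since being a viscosity solution of $(-\Delta)_\infty u=0$ means being both a sub- and a supersolution.

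The only point demanding a little care — and the one I would flag as the main (minor) obstacle — is the behaviour at points where $\nabla\varphi(x)=0$: there both $\Delta_\infty\varphi(x)$ and $M_r^\infty\varphi(x)$ are defined through the $\sup_{\omega}\inf_{\zeta}$ conventions rather than along a single direction $z(x)$, so one must verify that Proposition \ref{mvploc189} (equivalently the smooth asymptotic expansion of Theorem \ref{asympw}) still delivers the expansion with precisely the $\sup$--$\inf$ version of $\Delta_\infty$ on the right-hand side. Granting this, as Proposition \ref{mvploc189} does, no monotonicity of $M_r^\infty$ or any further estimate is needed, and the proof is complete.
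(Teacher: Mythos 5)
Your proposal is correct and follows exactly the scheme the paper intends: the theorem is presented there as an immediate consequence of Proposition \ref{mvploc189}, obtained by applying the smooth expansion to the test function $v$ from \eqref{vvv234}, precisely as in the proofs of Theorems \ref{theorem}, \ref{theorem2345}, \ref{theorem2} and \ref{theorem234567}. Your extra remark about the $\sup$--$\inf$ convention at points where $\nabla\varphi(x)=0$ is a sensible caution but does not change the argument.
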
}
 \appendix
 \section{Useful asymptotics}\label{appendicite}

We insert in this appendix some asymptotic results, that we use along the paper. 
\begin{proposition}\label{useful}
Let $s\in (0,1)$. For $r$ small enough the following hold:
	\begin{subequations}
	\begin{align}
		& \int_1^{\frac{1}{r}} t \left(\frac{1}{(t^2-1)^s}-\frac{1}{t^{2s}} \right) \, dt = \mathcal O(1), \label{eun}
	\\
	& \int_{\frac{1}{r}}^\infty \frac{1}{t} \left( \frac{t^{2s}}{(t^2-1)^s} - 1\right) \, dt = \mathcal O(r^{2}). \label{trois}
	\end{align}
	\end{subequations}
	Furthermore,
	\eqlab{\label{qutr} \lim_{s\to 1^-} (1-s)\int_{1+r}^\infty \frac{dt}{t(t^2-1)^s} \, dt =0.
	}
\end{proposition}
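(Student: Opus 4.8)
# Proof Proposal for Proposition \ref{useful}

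\textbf{Overall strategy.} All three statements are elementary one-dimensional integral asymptotics, and the common mechanism is that the integrand $\frac{1}{(t^2-1)^s}-\frac{1}{t^{2s}}$ (or its reciprocal-type variant) is, for $t$ bounded away from $1$, of order $t^{-2s-2}$ by a Taylor expansion of $(1-t^{-2})^{-s}$, while near $t=1$ the singularity $(t-1)^{-s}$ is integrable since $s<1$. The plan is to treat the ``near $1$'' region and the ``tail'' region separately in each case, and to keep careful track of how the endpoints $1/r$, $1+r$ depend on $r$.

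\textbf{Proof of \eqref{eun}.} Split $\int_1^{1/r} = \int_1^{2} + \int_2^{1/r}$ (assuming $r<1/2$). On $[1,2]$, bound $t\le 2$ and note $\frac{1}{(t^2-1)^s}\le \frac{1}{(t-1)^s}$, which is integrable on $[1,2]$ because $s<1$; the term $t^{1-2s}$ is bounded on $[1,2]$. So $\int_1^2 = \mathcal O(1)$. On $[2,1/r]$ write $\frac{1}{(t^2-1)^s}-\frac{1}{t^{2s}} = t^{-2s}\big((1-t^{-2})^{-s}-1\big)$, and by the mean value theorem (or binomial expansion) $(1-t^{-2})^{-s}-1 = \mathcal O(t^{-2})$ uniformly for $t\ge 2$; hence the integrand is $\mathcal O(t^{1-2s}\cdot t^{-2}) = \mathcal O(t^{-1-2s})$, which is integrable on $[2,\infty)$, giving $\int_2^{1/r} = \mathcal O(1)$. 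Adding the two pieces yields \eqref{eun}.

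\textbf{Proof of \eqref{trois}.} Here $t\ge 1/r \ge 2$ already lies in the region where $\frac{t^{2s}}{(t^2-1)^s}-1 = (1-t^{-2})^{-s}-1 = \mathcal O(t^{-2})$. Therefore the integrand is $\mathcal O(t^{-3})$, and $\int_{1/r}^\infty t^{-3}\,dt = \tfrac12 r^2$, so $\int_{1/r}^\infty = \mathcal O(r^2)$, which is \eqref{trois}.

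\textbf{Proof of \eqref{qutr}.} Compute directly: with the substitution $u=t^2-1$ or simply by comparison, $\int_{1+r}^\infty \frac{dt}{t(t^2-1)^s}$. Near $t=1+r$ the integrand behaves like $(t-1)^{-s}$, so the integral converges and is finite for each fixed $s\in(0,1)$; moreover it is bounded above, uniformly in $s$ bounded away from $0$, by $\int_{1+r}^{2}(t-1)^{-s}\,dt + \int_2^\infty t^{-1-2s}\,dt$. The first term equals $\frac{(1)^{1-s}-r^{1-s}}{1-s}\le \frac{1}{1-s}$, and the second is bounded by $\int_2^\infty t^{-3/2}\,dt$ once $s>1/2$ (or one keeps it as $\frac{2^{-2s}}{2s}$, which is bounded). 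Hence $(1-s)\int_{1+r}^\infty \frac{dt}{t(t^2-1)^s} \le (1-s)\cdot\frac{1}{1-s} + (1-s)\cdot C \to 0$ as $s\to 1^-$, giving \eqref{qutr}.

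\textbf{Main obstacle.} There is no serious obstacle; the only point requiring mild care is making the estimate $(1-t^{-2})^{-s}-1 = \mathcal O(t^{-2})$ \emph{uniform in $s\in(0,1)$} (so that the $\mathcal O$-constants do not blow up), which follows since $\big|(1-x)^{-s}-1\big|\le \frac{s x}{1-x}\le 2sx\le 2x$ for $x\in[0,1/2]$, and from controlling the constant in \eqref{qutr} uniformly for $s$ near $1$, handled above by splitting at $t=2$.
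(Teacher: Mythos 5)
Your proofs of \eqref{eun} and \eqref{trois} are correct. For \eqref{eun} you split at $t=2$ and compare with $(t-1)^{-s}$ near the singularity and with $t^{-1-2s}$ in the tail, whereas the paper simply computes the antiderivative in closed form and reads off $\tfrac{1}{2(1-s)}\big(\mathcal O(r^{2s})+1\big)$; both work, and yours avoids the exact primitive. For \eqref{trois} your argument is essentially the paper's.

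Your proof of \eqref{qutr}, however, breaks down at the last step. You correctly compute $\int_{1+r}^{2}(t-1)^{-s}\,dt=\frac{1-r^{1-s}}{1-s}$, but you then discard the term $-r^{1-s}$, bound this by $\frac{1}{1-s}$, and conclude that $(1-s)\cdot\frac{1}{1-s}+(1-s)\,C\to 0$. That is false: $(1-s)\cdot\frac{1}{1-s}=1$, so your chain of inequalities only yields $\limsup_{s\to 1^-}(1-s)\int_{1+r}^{\infty}\frac{dt}{t(t^2-1)^s}\le 1$, which does not prove the statement. The quantity you threw away carries the whole content of the lemma: for fixed $r>0$ one has $r^{1-s}\to r^{0}=1$ as $s\to 1^-$, hence $(1-s)\cdot\frac{1-r^{1-s}}{1-s}=1-r^{1-s}\to 0$ (in fact $\frac{1-r^{1-s}}{1-s}\to\log(1/r)$, so the integral itself remains bounded as $s\to 1^-$ for fixed $r$, which already forces the limit to vanish). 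This is exactly how the paper concludes; retaining the exact value you had already computed repairs the argument immediately.
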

\begin{proof}
To prove \eqref{eun}, integrating, we have that 
\bgs{
		 \int_1^{\frac{1}{r}} t \left(\frac{1}{(t^2-1)^s}-\frac{1}{t^{2s}} \right) \, dt  = \frac{1}{2(1-s)} \left( \frac{(1-r^2)^{1-s} -1}{r^{2(1-s)} } +1 \right) = \frac{1}{2(1-s)} \left( \mathcal O(r^{2s})+1 \right).
		 }
		 In a similar way, we get \eqref{eun}. To obtain \eqref{trois}, we notice that since $\frac{1}{t}<r<1$, with a Taylor expansion we have
	\[ 
		 \frac{1}{(1-\frac{1}{t^2})^s}-1
		= s\frac{1}{t^2} +  o \left(\frac{1}{t^2} \right),
	\]
	and the conclusion is reached by integrating. 
	Furthermore
	\bgs{
	\int_{1+r}^\infty \frac{dt}{t(t^2-1)^s} \, dt
	= \int_{1+r}^2 \frac{dt}{t(t^2-1)^s} \, dt +\int_{2}^\infty \frac{dt}{t(t^2-1)^s} \, dt \leq \frac{ c(1-r^{1-s})}{1-s} + \frac{c}{s}.
	}
	Multiplying by $(1-s)$ and taking the limit, we get \eqref{qutr}.	
\end{proof}

\bigskip
\bibliography{biblio}
\bibliographystyle{plain}

\end{document}